\newtheorem{Theorem}{Theorem}[section]
\newtheorem{Proposition}[Theorem]{Proposition}
\newtheorem{Lemma}[Theorem]{Lemma}
\newtheorem{Corollary}[Theorem]{Corollary}
\theoremstyle{definition}
\newtheorem{Definition}[Theorem]{Definition}
\newtheorem{Remark}[Theorem]{Remark}
\newcommand{\bTheorem}[1]{
\begin{Theorem} \label{T#1} }
\newcommand{\eT}{\end{Theorem}}
\newcommand{\bProposition}[1]{
\begin{Proposition} \label{P#1}}
\newcommand{\eP}{\end{Proposition}}
\newcommand{\bLemma}[1]{
\begin{Lemma} \label{L#1} }
\newcommand{\eL}{\end{Lemma}}
\newcommand{\bCorollary}[1]{
\begin{Corollary} \label{C#1} }
\newcommand{\eC}{\end{Corollary}}
\newcommand{\bRemark}[1]{
\begin{Remark} \label{R#1} }
\newcommand{\eR}{\end{Remark}}
\newcommand{\Td}{{\mathcal{T}^d}}
\newcommand{\bDefinition}[1]{
\begin{Definition} \label{D#1} }
\newcommand{\eD}{\end{Definition}}
\newcommand{\toS}{\stackrel{(S)}{\longrightarrow}}
\newcommand{\toSC}{\stackrel{(S)}{\Longrightarrow}}
\newcommand{\Nu}{\mathcal{V}_{t,x}}
\newcommand{\rN}{\vr_{h, \mu_n}}
\newcommand{\mN}{\vm_{h, \mu_n}}
\newcommand{\rNN}{\vr_{h_n, \mu_n}}
\newcommand{\mNN}{\vm_{h_n, \mu_n}}
\newcommand{\eps}{\varepsilon}
\newcommand{\vc}[1]{{\bm #1}}
\newcommand{\jump}[1]{\left[ \!\left[ #1 \right]\! \right]}
\newcommand{\vrh}{\vr_h}
\newcommand{\vmh}{\vm_h}
\newcommand{\tvm}{\tilde{\vc{m}}}
\newcommand{\bfphi}{\boldsymbol{\varphi}}
\newcommand{\bfvarphi}{\boldsymbol{\varphi}}
\newcommand{\ds}{\,\mathrm{d}S_x}
\newcommand{\bFormula}[1]{
\begin{equation} \label{#1}}
\newcommand{\eF}{\end{equation}}
\newcommand{\facesint}{\mathcal{E}}
\newcommand{\grid}{\mathcal{T}}
\newcommand{\vuh}{\vu_h}
\newcommand{\intSh}[1] {\int_{\sigma} #1 \ds }
\newcommand{\Divh}{{\rm div}_h}
\newcommand{\Gradh}{\nabla_h}
\newcommand{\Ov}[1]{\overline{#1}}
\newcommand{\aleq}{\lesssim}
\newcommand{\vr}{\varrho}
\newcommand{\tvr}{\tilde \vr}
\newcommand{\vu}{\vc{u}}
\newcommand{\vm}{\vc{m}}
\newcommand{\vn}{\vc{n}}
\newcommand{\sumsfK}{\sum_{\sigma \in \pd K}}
\newcommand\Up{\mbox{\sl{Up}}}
\newcommand{\avs}[1]{\left\{\!\!\left\{ #1\right\}\!\!\right\}}
\newcommand{\muh}{h^\varepsilon}
\newcommand{\vvh}{\bm{v}_h}
\newcommand{\pd}{\partial}
\newcommand{\Divup}{{\rm div}_h^{\up}}
\newcommand{\up}{{\rm up}}
\newcommand{\gradd}{\nabla_{\mathcal D}}
\newcommand{\norm}[1]{\left\lVert#1\right\rVert}
\newcommand{\intTd}[1]{\int_{\Td} #1 \, \dx}
\newcommand{\vQh}{\vc{Q}_h}
\newcommand{\Div}{{\rm div}_x}
\newcommand{\Grad}{\nabla_x}
\newcommand{\dx}{\,{\rm d} {x}}
\newcommand{\dt}{\,{\rm d} t }
\newcommand{\intO}[1]{\int_{\Td} #1 \ \dx}
\newcommand{\intQ}[1]{\int_{Q} #1 \ {\rm d}y}
\newcommand{\vv}{\vc{v}}
\newcommand{\D}{{\rm d}}
\newcommand{\ep}{\varepsilon}
\def\softd{{\leavevmode\setbox1=\hbox{d}%
          \hbox to 1.05\wd1{d\kern-0.4ex{\char039}\hss}}}
\definecolor{Cgrey}{rgb}{0.85,0.85,0.85}
\newcommand\Cbox[2]{%
    \newbox\contentbox%
    \newbox\bkgdbox%
    \setbox\contentbox\hbox to \hsize{%
        \vtop{
            \kern\columnsep
            \hbox to \hsize{%
                \kern\columnsep%
                \advance\hsize by -2\columnsep%
                \setlength{\textwidth}{\hsize}%
                \vbox{
                    \parskip=\baselineskip
                    \parindent=0bp
                    #2
                }%
                \kern\columnsep%
            }%
            \kern\columnsep%
        }%
    }%
    \setbox\bkgdbox\vbox{
        \color{#1}
        \hrule width  \wd\contentbox %
               height \ht\contentbox %
               depth  \dp\contentbox
        \color{black}
    }%
    \wd\bkgdbox=0bp%
    \vbox{\hbox to \hsize{\box\bkgdbox\box\contentbox}}%
    \vskip\baselineskip%
}
\date{}
\begin{document}


\title{Approximating viscosity solutions of the Euler system}

\author{Eduard Feireisl\thanks{The research of E.F., B.S. leading to these results has received funding from the
Czech Sciences Foundation (GA\v CR), Grant Agreement 21-02411S. The Institute of Mathematics of the Academy of Sciences of
the Czech Republic is supported by RVO:67985840.\newline
\hspace*{1em} $^\spadesuit$M.L. has been funded by the Deutsche Forschungsgemeinschaft (DFG, German Research Foundation) - Project number 233630050 - TRR 146 as well as by  TRR 165 Waves to Weather. She is grateful to the Gutenberg Research College for supporting her research.
The research of S.Sch. was funded by Mainz Institute of Multiscale Modelling.} $^{, \clubsuit}$
\and M\' aria Luk\' a\v cov\' a -- Medvi\softd ov\' a$^{\spadesuit}$
\and Simon Schneider$^{\spadesuit}$ \and Bangwei She$^{*}$
}


\maketitle

\bigskip

\centerline{$^*$ Institute of Mathematics of the Czech Academy of Sciences}
\centerline{\v Zitn\' a 25, CZ-115 67 Praha 1, Czech Republic}
\centerline{feireisl@math.cas.cz, she@math.cas.cz}

\bigskip

\centerline{$^\clubsuit$ Institute of Mathematics, TU Berlin}
\centerline{Strasse des 17. Juni, Berlin, Germany}

\bigskip
\centerline{$^\spadesuit$ Institute of Mathematics, Johannes Gutenberg-University Mainz}
\centerline{Staudingerweg 9, 55 128 Mainz, Germany}
\centerline{lukacova@uni-mainz.de, sschne15@uni-mainz.de}

\begin{abstract}
Applying the concept of S-convergence, based on averaging in the spirit of Strong Law of Large Numbers, the vanishing viscosity solutions
of the Euler system are studied. We show how to efficiently compute a viscosity solution of the Euler system as the S-limit of numerical solutions obtained by the Viscosity Finite Volume method.
Theoretical results are illustrated by numerical simulations of the Kelvin--Helmholtz instability problem.

%

\end{abstract}

{\bf Keywords:} barotropic Navier--Stokes system, isentropic Euler system, vanishing viscosity limit, viscosity finite volume method,  oscillatory solution, Kolmogorov hypothesis


\section{Introduction}
\label{i}

The method of convex integration, adapted to problems in fluid mechanics by Buckmaster, De Lellis, Isett, Sz\' ekelyhidi or Vicol
\cite{BuDeSzVi, DelSze13, Ise}
to name only a few, produced a large piece of evidence that the Euler system in fluid mechanics is ill posed, see also the survey paper \cite{BucVic} and the references cited therein. Another argument supporting ill--posedness of the incompressible Euler system
was presented recently by Bressan and Murray \cite{BresMur}. Although one may still hope that the \emph{incompressible} Euler
system is well posed in the class of strong solutions, see however Elgindi and Jeong \cite{ElgJeo}, this is definitely not the case if compressibility of the fluid is taken into account. Indeed Chiodaroli, De Lellis, and Kreml \cite{ChiDelKre} provided an example of Lipschitz initial data for which the isentropic Euler system possesses infinitely many admissible weak solutions on a sufficiently long time lap. More recently, Chiodaroli et al. \cite{ChKrMaSwI} identified even smooth ($C^\infty$) initial data yielding similar results.

As suggested by the numerical experiments of Elling \cite{ELLI1}, the ``wild" solutions obtained through
the abstract approach of convex integration may be physically relevant. The carbuncle solutions described in \cite{ELLI1} departing
for the initial data given by an admissible stationary state but being non--stationary may well represent a branch of energy dissipating ``wild'' solutions of the compressible Euler system predicted by convex integration.

In view of these facts, the Euler system being a model of perfect (ideal) fluid should be viewed in a broader context as an asymptotic limit of more complex systems describing
\emph{real} fluids including the effect of viscosity. Following the original idea of DiPerna and
Majda \cite{DipMaj87,DipMaj87a,DIMA} we identify a \emph{viscosity solution} of the Euler system with a parametrized family of probability measures $\{ \mathcal{V}_{t,x} \}_{t \in [0,T], x \in \Omega}$ generated by
solutions of the Navier--Stokes system in the vanishing viscosity limit. Here $t$ is the time and $x$ the spatial coordinate in the
domain $\Omega \subset \mathds{R}^d$ occupied by the fluid.

This process involves eventually approximation of the initial data for the Euler system. Here we should keep in mind that the vanishing viscosity limit may sensitively depend on the relation between the rate of convergence of the viscosity coefficients and the choice of the initial data approximation. As the Navier--Stokes system admits
global in time weak solutions for any finite energy initial data relevant for the limit Euler system, this issue can be avoided
by considering the same initial data for both systems. Another possibility is to focus on smooth initial data, for which the convergence is unconditional even for data perturbations at least on a short time interval. Both alternatives will be discussed below.

To separate the properties of $\mathcal{V}_{t,x}$ inherited from the generating sequence from those intrinsic to the limit Euler system,
we identify a family of \emph{observables} -- functions of the state variables having the same expectation at any
set of the physical space $(0,T) \times \Omega$ for all dissipative (viscosity) solutions $\mathcal{V}_{t,x}$, generated by the vanishing viscosity limit.
There are several plausible scenarios of the vanishing viscosity limit:
\begin{itemize}
\item
{\bf Oscillatory (weak) limit.} The generating sequence of solutions of the viscous problem is merely bounded and converges weakly (in the sense of integral averages). The oscillations of the generating sequence can be described by a Young measure. In this case, the limit is not expected to be
a weak solution of the Euler system, cf. \cite{MarEd}.

\item {\bf Statistical (strong) limit.} There is a hidden regularizing effect acting in the vanishing viscosity limit so that
the generating sequence is precompact in the strong $L^p-$topology. This phenomenon is intimately related to the celebrated Kolmogorov
hypothesis advocated by Chen and Glimm \cite{CheGli} discussed in Section~\ref{K}. The generating sequence is precompact but still may admit a non--trivial set of accumulation points. The convergence is understood in a statistical sense and may be described
by a suitable measure sitting on the set of weak solutions of the Euler system.

\item {\bf Unconditional (strong) limit.}
The generating sequence converges strongly to a single limit. In {particular,} this is the case when the limit Euler system admits a (unique) strong solution.

\end{itemize}

Of course, the vanishing viscosity limit may exhibit a mixture of the first and second alternative as the case may be.
Our main goal is to propose a method how to approximate/compute efficiently
the viscosity solutions, in particular
the observable quantities in the case of oscillatory and/or statistical limit, that would be compatibly with the strong limit scenario
as soon as it takes place. Note that,
by virtue of the convergence result of Chen and Perepelitsa \cite{ChenPer1}, the \emph{oscillatory} scenario is excluded for problems in the spatial dimension $d=1$. On the other hand,
oscillatory/statistical solutions are observed in numerical approximations of
problems with unstable initial data, notably as perturbations of shock {wave} solutions, cf.~Elling \cite{ELLI2, ELLI3}
or emerging from the shear flow of the Kelvin--Helmholtz instability, cf.~\cite{FeiLukMizSheWa} or Fjordholm et al.~\cite{FjKaMiTa, FjMiTa1}.

While a viscosity solution
of the Euler system
can be identified with a Young measure generated by a sequence of solutions of the Navier--Stokes system, its numerical approximation
is calculated via the method of S--convergence proposed in \cite{Fei2020A}. Given a sequence of
approximate (vanishing viscosity or numerical) solutions $\{ \vc{U}_n \}_{n=1}^\infty$, we consider a family of probability measures
\begin{equation} \label{i1}
\mathcal{V}_N = \frac{1}{N} \sum_{n=1}^N \delta_{\vc{U}_n}, \ \delta_{\vc{U}_n} - \mbox{the Dirac mass at}\ \vc{U}_n,
\end{equation}
or, alternatively, the associated family of \emph{parametrized} probability measures
\begin{equation} \label{i2}
\left\{ \mathcal{V}_N \right\}_{(t,x) \in (0,T) \times \Omega} = \frac{1}{N} \sum_{n=1}^N \delta_{\vc{U}_n(t,x)}.
\end{equation}
Note carefully the subtle difference between \eqref{i1} and \eqref{i2}. The measure {$\mathcal{V}_N$} is a probability measure on
the \emph{infinite} dimensional space of trajectories -- solutions of the approximate problem. The family $\left\{ \mathcal{V}_N \right\}_{(t,x) \in (0,T) \times \Omega}$ consists of probability measures on the \emph{finite} dimensional phase space --
the range of the approximate solutions. In both cases, the limit for $N \to \infty$ fits in the framework of S--convergence
developed in \cite{Fei2020A}. If suitable uniform bounds are available for
$\{ \vc{U}_n \}_{n=1}^\infty$, then the sequence $\{ \mathcal{V}_N \}_{N=1}^\infty$ is tight and converges narrowly
modulo a suitable subsequence
to a limit $\mathcal{V}$ called
S--limit. Here again, there is a conceptual difference between \eqref{i1} and \eqref{i2}. The converging subsequence in \eqref{i1}
is obtained in terms of $N$ leaving the original sequence $\{ \vc{U}_n \}_{n=1}^\infty$ unchanged, while in \eqref{i2}, the
subsequence is selected from $\{ \vc{U}_n \}_{n=1}^\infty$ in the spirit of Banach--Saks theorem.

The bulk of this paper is to study the asymptotic behavior of the parametrized measures \eqref{i2}. We consider a more general version,
\begin{equation} \label{i3}
\left\{ \mathcal{V}_N \right\}_{(t,x) \in (0,T) \times \Omega} = \frac{1}{N} \sum_{n=1}^N s_{n,N} \delta_{\vc{U}_n(t,x)},
\end{equation}
where $\{ s_{n,N} \}$ is a suitable \emph{summation method}. In contrast with the conventional Young measure obtained
directly as a \emph{weak} limit of the generating sequence, the S--limit in \eqref{i2} is
strong (a.a.) with respect to the independent variables $(t,x)$. This makes
the limit object ``visible'' in numerical experiments. A similar construction based on the Ces\` aro averages ($s_{n,N} \equiv 1$) was used by Balder \cite{Bald} to approximate conventional Young measures.

Our goal is to visualize -- compute effectively the limit measure in \eqref{i3}. To this end,
we propose a numerical method called \emph{viscosity finite volume} (VFV) method of \emph{hybrid} type that is not purely Euler oriented but
involves the ghost effect of physical viscosity inherited from the sequence generating the viscosity solution. The original VFV method was introduced in
\cite{ FeiLMMiz, FLM18_brenner} and can be seen as a discretization (with vanishing viscosity) of the model proposed in a series of papers
by Brenner \cite{BREN2, BREN,BREN1}, see also Guermond and Popov \cite{GuePop}.  For the case of barotropic Euler system the method involves
three vanishing, mutually interrelated parameters: the numerical step $h$, the shear viscosity coefficient $\mu$, and the bulk
viscosity coefficient $\lambda$.
We show that if $\mu = \mu(h)$, $\lambda = \lambda(h)$ are chosen such that
\[
0< h << \mu(h),\ \mu(h) \to 0,  \lambda(h) \to 0 \ \mbox{as}\ h \to 0,
\]
then the VFV method produces a sequence of approximate solutions that admits an S--limit $\mathcal{V}$ that coincides with a viscosity solution of the Euler system. The latter will be precisely described in Definition~\ref{PD3}. The corresponding limiting processes are depicted in the flow chart below. In particular, the observables are uniformly approximated in the strong topology of the Lebesgue space $L^1((0,T) \times \Omega)$.

\bigskip

%
%

\begin{figure*}[!h]
\centering
\begin{tikzpicture}[scale=1,    box/.style = {draw, rounded corners,
                 minimum width=22mm, minimum height=5mm, align=center},
            > = {Latex[width=2mm,length=3mm]}]

            \draw[fill=Cgrey,Cgrey] (0,0.6) rectangle (\textwidth,-2.9);
\node[draw,  thick] at (0.12\textwidth, 0)   (n4)  [box]{{\bf VFV scheme}};
\node[draw,  thick] at (0.48\textwidth, -2.4)   (n5)  [box]  {{\bf Navier--Stokes system}};
\node[draw,  thick] at (0.88\textwidth, 0)   (n6)  [box]   {{\bf Euler system}};

\path[thick,->]
            (n4)    edge  node[sloped, anchor=center, above] {strong limit}          (n5)
            (n4)    edge  node[sloped, anchor=center, below] { $h \to 0$}          (n5)
            (n4)    edge  node[sloped, anchor=center, above] { S--limit}     (n6)
            (n4)    edge  node[sloped, anchor=center, below] { $h \to 0; \mu(h) \to 0; \lambda(h) \to 0$}     (n6)
            (n5)    edge  node[sloped, above]{\hspace{-0.8cm}vanishing viscosity limit  } (n6)
            (n5)    edge  node[sloped, anchor=center, below]{$\mu \to 0 ; \lambda  \to 0$ } (n6);
\end{tikzpicture}
\end{figure*}
\bigskip

The following are the main topics discussed in the present paper:
\begin{itemize}

\item In Section \ref{P}, we recall certain properties of the compressible {(barotropic)} Navier--Stokes system, in particular in the vanishing viscosity regime. Then we introduce the concept of viscosity solution to the Euler system.

\item In Section \ref{s}, we revisit the theory of S--convergence introduced in \cite{Fei2020A} and discuss
its relation to strong and weak convergence. We also show the \emph{subsequence principle} claiming that any bounded sequence admits
an S--convergent subsequence.

\item In Section \ref{e}, we apply the abstract results to the barotropic Euler system. We identify the viscosity solutions
with a Young measure generated in the vanishing viscosity limit of the Navier--Stokes problem. We  also introduce the concept of
\emph{generating sequence} for a viscosity solution to be approximated later by a numerical scheme.

\item In Section \ref{N}, we introduce the VFV method, discuss its structure preserving properties and show convergence to a viscosity solution of the Euler system. Theoretical results are illustrated by numerical experiments at the end of Section~\ref{N}.

\item Finally, the impact of Kolmogorov hypothesis is discussed in Section~\ref{K}.

\end{itemize}

\section{Euler and Navier--Stokes systems, viscosity solution}
\label{P}

Consider a bounded regular domain $\Omega \subset \mathds{R}^d$, $d = 2,3$ occupied by a fluid of a mass density
$\vr = \vr(t,x)$ moving with the velocity $\vu = \vu(t,x)$, where $x \in \Omega$ and $t \in [0,T]$ is the time.
We suppose that the fluid is \emph{perfect} and ignore thermal effects.
Accordingly, the time evolution of the system is governed by the barotropic \emph{Euler system}:
\begin{equation} \label{P1}
\begin{split}
\partial_t \vr + \Div \vm &= 0,\ \vm \equiv \vr \vu,\\
\partial_t \vm + \Div \left( \frac{\vm \otimes \vm}{\vr} \right) + \Grad p(\vr) &= 0,
\end{split}
\end{equation}
that may be
supplemented with the \emph{impermeability boundary condition}
\begin{equation} \label{P2}
\vm \cdot \vc{n}|_{\partial \Omega} = 0,\ \vc{n}- \mbox{the outer normal vector.}
\end{equation}
For the sake of simplicity, we focus on the isentropic pressure--density equation of state,
$p(\vr) \equiv a \vr^\gamma,\ a > 0, \ \gamma > 1$.

Real fluids are \emph{viscous}, and, in accordance with the Second law of thermodynamics, they dissipate mechanical energy. Supposing
the simplest linear (Newtonian) relation between the viscous stress and the symmetric velocity gradient,
we consider the \emph{Navier--Stokes {\rm (NS)} system}:
\begin{equation} \label{P3}
\begin{split}
\partial_t \vr + \Div (\vr \vu) &= 0,\\
\partial_t (\vr \vu) + \Div \left( \vr \vu \otimes \vu \right) + \Grad p(\vr) &= \Div \mathbb{S} (\Grad \vu),\\
\mathbb{S}(\Grad \vu) &= \mu \left( \Grad \vu + \Grad \vu^t - \frac{2}{d} \Div \vu \mathbb{I} \right) +
\lambda \Div \vu \mathbb{I},\ \mu > 0,\ \lambda \geq 0.
\end{split}
\end{equation}

As mentioned above, our approach is based on identifying suitable solutions of the Euler system \eqref{P1} as limits
of \eqref{P3} in the regime of vanishing viscosity coefficients $\mu$ and $\lambda$. As is well known, see e.g. the survey by E \cite{E1},
this involves the problem of a boundary layer that may be created depending on the choice of the boundary conditions for the
Navier--Stokes system, notably in the popular case of \emph{no--slip}
\[
\vu|_{\partial \Omega} = 0.
\]
To avoid this difficulty, the \emph{complete slip} conditions
\begin{equation} \label{P4}
\vu \cdot \vc{n}|_{\partial \Omega} = 0,\ [ \mathbb{S}(\Grad \vu) \cdot \vc{n} ] \times \vc{n}|_{\partial \Omega} = 0
\end{equation}
can be imposed. In view of future numerical implementation, we simplify even more by considering the \emph{space periodic
boundary conditions} for both the Euler and the Navier--Stokes system. In other words, we identify the spatial domain $\Omega$ with
the flat torus,
\begin{equation} \label{P5}
\Omega \equiv \Td = \left(  [0,1] |_{\{ 0, 1 \}} \right)^d,\ d = 2,3.
\end{equation}
Note that the slip condition \eqref{P4} can be equivalently reformulated in the periodic setting as soon as $\Omega$ is a cuboid, see
Ebin \cite{EB}.

\subsection{Weak solutions to the Navier--Stokes system}

The existence theory for the Navier--Stokes system \eqref{P3} has been developed by Lions \cite{LI4} and extended
in \cite{EF70} to accommodate a larger range of the adiabatic coefficient $\gamma > \frac{d}{2}$. The limit case $\gamma = 1$, $d=2$ has been finally settled by Plotnikov and Weigant \cite{PloWei}.

\begin{Definition}[{\bf Finite energy weak solution to NS system}] \label{PD1}
The functions $[\vr, \vu]$ are termed \emph{finite energy weak solution} to the Navier--Stokes system
\eqref{P3} in $(0,T) \times \Td$ with the initial conditions
\[
\vr(0, \cdot) = \vr_0, \ \vr \vu(0, \cdot) = \vm_0
\]
if the following holds:
\begin{itemize}
\item {\bf Integrability}
\[
\begin{split}
\vr &\in C_{\rm weak}([0,T]; L^\gamma(\Td)),\ \vr \geq 0,\\
\vu &\in L^2(0,T; W^{1,2}(\Td; \mathds{R}^d)),\
\vm = \vr \vu \in C_{\rm weak}([0,T]; L^{\frac{2 \gamma}{\gamma + 1}}
(\Td; \mathds{R}^d)).
\end{split}
\]
\item {\bf Equation of continuity}
\[
\int_0^T \intO{ \Big[ \vr \partial_t \varphi + \vr \vu \cdot \Grad \varphi \Big] } \dt = - \intO{\vr_0 \varphi}
\]
for any $\varphi \in C^1_c([0,T) \times \Td)$.
\item {\bf Momentum equation}
\[
\begin{split}
\int_0^T \intO{ \Big[ \vr \vu \cdot \partial_t \bfphi + \vr \vu \otimes \vu : \Grad \bfphi +
p(\vr) \Div \bfphi \Big] } \dt &= \int_0^T \intO{ \mathbb{S}(\Grad \vu) : \Grad \bfphi } \dt\\
&- \intO{\vm_0 \cdot \bfphi(0, \cdot)}
\end{split}
\]
for any $\bfphi \in C^1_c([0,T) \times \Td; \mathds{R}^d)$.
\item {\bf Energy inequality}
\[
\begin{split}
\intO{ \Big[ \frac 1 2 \frac{|\vm|^2}{\vr}  + \frac{a}{\gamma - 1} \vr^\gamma \Big] (\tau, \cdot) }
&+ \int_0^\tau \intO{ \mathbb{S}(\Grad \vu) : \Grad \vu } \dt \\
&\leq \intO{ \Big[ \frac{1}{2} \frac{|\vm_0|^2}{\vr_0} + \frac{a}{\gamma - 1} \vr_0^\gamma \Big] }
\end{split}
\]
for any $0 \leq \tau \leq T$.

\end{itemize}
\end{Definition}

The following result was {proven} in \cite[Theorem 7.1]{EF70}.

\begin{Theorem}[{\bf Global existence for (NS) system}] \label{PT1}
Let $\gamma > \frac{d}{2}$ and let
\[
\vr_0 \in L^\gamma(\Td), \ \vr_0 \geq 0,\ \intO{ \frac{|\vm_0|^2}{\vr_0} } < \infty.
\]

Then the Navier--Stokes system admits a finite energy weak solution $[\vr, \vu]$ in $(0,T) \times \Td$ in the sense of Definition
\ref{PD1} for any $T \leq \infty$.

\end{Theorem}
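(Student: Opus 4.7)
The plan is to follow the now-classical Lions--Feireisl--Novotn\'y--Petzeltov\'a strategy via a three-level approximation scheme, where the sole nontrivial difficulty is securing strong $L^1$-convergence of the density in order to identify the weak limit of the pressure $p(\vr) = a \vr^\gamma$ in the limiting momentum equation.

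First I would construct approximate solutions $[\vr_{N,\ep,\delta}, \vu_{N,\ep,\delta}]$ on $\Td$ by a triple regularization: (i) a Faedo--Galerkin projection of the momentum equation onto the span of the first $N$ eigenfunctions of the Laplacian on $\Td$; (ii) an artificial viscosity $\ep \Delta \vr$ added to the continuity equation so that $\vr$ solves a strictly parabolic problem admitting a positive minimum principle when $\vr_0$ is regularized away from zero; (iii) an artificial pressure $\delta \vr^\beta$ with $\beta$ large, chosen depending on $d$ and $\gamma$, to compensate the poor integrability at the innermost level. At fixed $(N,\ep,\delta)$ the coupled ODE--PDE system is solvable by a Schauder fixed point on a small time interval, and the standard energy identity (with the additional artificial dissipation and pressure contributions) prolongs the solution globally. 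Passing $N \to \infty$ is routine once the test functions are chosen in the Galerkin basis, yielding a weak solution at level $(\ep,\delta)$.

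The limits $\ep \to 0$ and then $\delta \to 0$ are the technical core. At each stage the energy inequality produces the uniform bounds $\vr \in L^\infty_t L^\gamma_x$, $\sqrt{\vr}\,\vu \in L^\infty_t L^2_x$, $\vu \in L^2_t W^{1,2}_x$, and $\vm = \vr\vu \in L^\infty_t L^{2\gamma/(\gamma+1)}_x$; combined with the continuity equation and the Aubin--Lions lemma these give $\vr_n \to \vr$ in $C_{\mathrm{weak}}([0,T]; L^\gamma)$ and strong convergence of $\vm_n \otimes \vm_n / \vr_n$ suitable for passing to the limit in the convective term. The real obstacle is the nonlinear pressure: a priori only $p(\vr_n) \rightharpoonup \overline{p(\vr)}$ in distributions, and we must prove $\overline{p(\vr)} = p(\vr)$. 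The key instrument is Lions' \emph{effective viscous flux} identity
\[
\overline{p(\vr)\, \Div \vu} - \overline{p(\vr)} \,\Div \vu \;=\; \Big( \lambda + \tfrac{2(d-1)}{d}\mu \Big) \Big( \overline{\vr\, \Div \vu} - \vr\, \Div \vu \Big),
\]
obtained by testing the momentum equation with $\Grad \Delta^{-1}[\vr_n]$ and exploiting a commutator estimate of Coifman--Rochberg--Weiss type, together with improved density integrability $\vr_n \in L^{\gamma + \theta}_{t,x}$ for some $\theta(\gamma,d) > 0$ derived analogously with test function $\Grad \Delta^{-1}[\vr_n^\theta]$.

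The hardest step, and the precise reason the range extends to $\gamma > d/2$ rather than Lions' original $\gamma \geq 9/5$ in $d=3$, is to propagate the DiPerna--Lions \emph{renormalized} formulation of the continuity equation through the limit when $\vr$ is not in $L^2_{t,x}$. I would handle this via Feireisl's \emph{oscillation defect measure}
\[
\mathrm{osc}_{\gamma+1}[\vr_n \to \vr](Q) \;=\; \sup_{k \geq 1} \limsup_{n \to \infty} \int_Q \bigabs{T_k(\vr_n) - T_k(\vr)}^{\gamma+1} \dxdt,
\]
with $T_k$ a smooth truncation at level $k$, proving its boundedness through the effective viscous flux identity, and using this bound to justify a renormalized equation for the weak limit $\vr$ with convex nonlinearity $b(\vr) = \vr \log \vr$. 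Comparing this equation with the analogous one satisfied by $\overline{\vr \log \vr}$ and exploiting strict convexity yields $\overline{\vr \log \vr} = \vr \log \vr$ a.e., hence $\vr_n \to \vr$ strongly in $L^1((0,T)\times \Td)$; this in turn identifies $\overline{p(\vr)} = p(\vr)$ and closes the limit passage. The energy inequality in Definition~\ref{PD1} is then inherited by weak lower semicontinuity from the corresponding inequality at each approximation level.
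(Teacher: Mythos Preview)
The paper does not prove this theorem at all: it simply cites \cite[Theorem~7.1]{EF70} (Feireisl, \emph{Dynamics of Viscous Compressible Fluids}). Your outline is a faithful sketch of precisely the argument developed in that reference---the three-level approximation (Galerkin, artificial viscosity $\ep\Delta\vr$, artificial pressure $\delta\vr^\beta$), the effective viscous flux identity, and the oscillation defect measure device that pushes the admissible range from Lions' $\gamma \geq 3d/(d+2)$ down to $\gamma > d/2$---so there is no discrepancy to report beyond the fact that the paper treats the result as a black box while you have reproduced its internals.
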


\subsection{Viscosity solutions to the Euler system}
\label{VSES}

We are ready to introduce the concept of viscosity solution to the Euler system. We consider finite energy measurable initial data $[\vr_0, \vm_0]$,
\[
\vr_0 \geq 0,\ \intO{ \left[ \frac{1}{2} \frac{|\vm_0|^2}{\vr_0} + \frac{a}{\gamma - 1} \vr_0^\gamma \right] } < \infty.
\]

\begin{Definition} [{\bf Regular data approximation}] \label{PD3a}

Let $[\vr_0, \vm_0]$ be finite energy data. We say that a sequence $[\vr_{0,n}, \vm_{0,n}]$ is {a} \emph{regular approximation}
of the data $[\vr_0, \vm_0]$ if
\[
\begin{split}
\vr_{0,n} \in W^{k,2}(\Td),\ \vm_{0,n} &\in W^{k,2}(\Td; \mathds{R}^d)\ \mbox{for some}\ k \geq 3,\ \inf_{\Td} \vr_{0,n} > 0,
\\
\vr_{0,n} &\to \vr_0 \ \mbox{weakly in}\ L^1(\Td),\
\vm_{0,n} \to \vm_0 \ \mbox{weakly in}\ L^1(\Td; \mathds{R}^d),\\
\intO{ \Big[ \frac{1}{2} \frac{|\vm_{0,n}|^2}{\vr_{0,n}} +
\frac{a}{\gamma - 1} \vr_{0,n}^\gamma \Big] } &\to \intO{ \Big[ \frac{1}{2} \frac{|\vm_{0}|^2}{\vr_{0}} +
\frac{a}{\gamma - 1} \vr_{0}^\gamma \Big] }.
\end{split}
\]

\end{Definition}

As is well--known, cf. e.g. Matsumura and Nishida \cite{MANI} or Valli and Zajaczkowski \cite{VAZA}, the Navier--Stokes
system \eqref{P3} admits a local in time classical solution for any initial data $[\vr_{0,n}, \vm_{0,n}]$ in the regularity
class specified in Definition \ref{PD3a}.

\begin{Definition}[{\bf Viscosity solution of Euler system}] \label{PD3}

A parametrized family of probability measures
\[
\mathcal{V}_{t,x}: (t,x) \in (0,T) \times \Td \mapsto \mathfrak{P}({\mathds{R}}^{d + 1}), \ \mathcal{V} \ \mbox{weakly measurable,}
\]
{where $\mathfrak{P}$ denotes the set of probability measures,}
is called \emph{viscosity solution} of the Euler system \eqref{P1} with the initial data $[\vr_0, \vm_0]$ if there exists a
regular approximation of the initial data $[\vr_{0,n}, \vm_{0,n}]$, sequences of viscosity coefficients
\[
\mu_n \searrow 0,\ \lambda_n \to 0,
\]
and a sequence of finite energy weak solutions $[\vr_n, \vm_n \equiv \vr_n \vu_n]_{n=1}^\infty$ of the Navier--Stokes
system \eqref{P3} starting from the initial data $[\vr_{0,n}, \vm_{0,n}]$ such that
\[
b(\vr_n, \vm_n) \to \Ov{b(\vr, \vm)} \ \mbox{weakly-(*) in}\ L^\infty((0,T) \times \Td)
\ \mbox{for any}\ b \in C_c ({\mathds{R}}^{d+1}),
\]
where
\[
\Ov{b(\vr, \vm)} (t,x) = \left< \mathcal{V}_{t,x} ; b (\tvr, \tvm) \right>
\ \mbox{for a.a.}\ (t,x) \in (0,T) \times \Td.
\]
\end{Definition}

Thus a viscosity solution of the Euler system is simply a Young measure generated by a sequence of solutions
of the Navier--Stokes system in the vanishing viscosity limit.
The fact that the Navier--Stokes system admits a classical local in time solution for any
regular approximation
$[\vr_{0,n}, \vm_{0,n}]$ of the initial data does not imply,
of course, the existence of a ``smooth'' viscosity solution not even on a short time interval, as the life span of a potential generating sequence may shrink
to zero for $n \to \infty$. However, in view of the existence result for the Navier--Stokes system stated in Theorem~\ref{PT1},
a viscosity solution of the Euler system always exists at least if $\gamma > \frac{d}{2}$.

A viscosity solution may depend on the choice of {\bf (i)} regular approximation of the initial data
$[\vr_{0,n}, \vm_{0,n} ]_{n=1}^\infty$, {\bf (ii)} the sequence of shear viscosity coefficients $\mu_n \searrow 0$,
{\bf (iii)}
the sequence of
bulk viscosity coefficients $\lambda_n \to 0$. To minimize the number of independent parameters we may omit the
approximation of the initial data and consider directly $[\vr_{0,n}, \vm_{0,n}] = [\vr_0, \vm_0]$ as the Navier--Stokes
admits global in time weak solutions for any finite energy data. Moreover, one can fix $\lambda$ as a function of $\mu$, in particular
we may set $\lambda_n = 0$, see Section \ref{e}.

As the vanishing viscosity limit is a consistent approximation of the
Euler system in the sense of \cite{FeiLMMiz} (see \cite{MarEd}), any viscosity solution specified in Definition \ref{PD3} is a dissipative measure valued (DMV)
solution of the Euler system in the sense of  \cite{FeiLMMiz}.

\begin{Definition}[{\bf DMV solution of Euler system}]  \label{DMV}

A parametrized family of probability measures
\[
\mathcal{V}_{t,x}: (t,x) \in (0,T) \times \Td \mapsto \mathfrak{P}({\mathds{R}}^{d + 1}), \ \mathcal{V} \ \mbox{weakly measurable,}
\]
is called \emph{dissipative measure valued (DMV) solution} of the Euler system \eqref{P1} with
the initial conditions $[\vr_0, \vm_0]$ if the following hold:
\begin{itemize}
\item {\bf Integrability}
\[
\left< \mathcal{V}; \tvr \right> \in C_{\rm weak}([0,T]; L^\gamma(\Td)),
\]
\[
\left< \mathcal{V}; \tvm \right> \in C_{\rm weak}([0,T]; L^{\frac{2\gamma}{\gamma + 1}}(\Td; \mathds{R}^d)).
\]
\item {\bf Equation of continuity}
\[
\int_0^T \intO{ \Big[ \left< \mathcal{V} ; \tvr \right> \partial_t \varphi + \left< \mathcal{V} ; \tvm \right> \cdot \Grad \varphi \Big] } \dt = - \intO{ \vr_0 \varphi (0, \cdot) }
\]
for any $\varphi \in C_c^{1}([0,T) \times \Td)$.

\item {\bf Momentum equation}
\[
\begin{split}
&\int_0^T \intO{ \left[ \left< \mathcal{V}; \tvm \right> \cdot \partial_t \bfvarphi + \left< \mathcal{V}; 1_{\tvr > 0}\frac{\tvm \otimes \tvm}{\tvr} \right>: \Grad \bfvarphi
+ \left< \mathcal{V} ; p(\tvr) \right> \Div \bfvarphi \right] } \dt \\ & = - \intO{ \vm_0 \cdot \bfphi }
- \int_0^T \int_{\Td} \Grad \bfvarphi : \D \mathfrak{R}(t) \dt
 \end{split}
\]
for any $\bfvarphi \in C^1_c([0,T) \times \Td; \mathds{R}^d)$, with the Reynolds defect $\mathfrak{R} \in L^\infty(0,T; \mathcal{M}^+(\Td; \mathds{R}^{d\times d}_{\rm {sym}}))$,
where ${\mathcal{M}^+(\Td; \mathds{R}^{d\times d}_{\rm {sym}})}$ denotes the space of {positive semi-definite, symmetric matrix-valued} measures.

\item {\bf Energy inequality}
\[
\intO{ \left< \mathcal{V}_{\tau,x} ; \frac{1}{2} \frac{|\tvm|^2}{\tvr} +
P(\tvr) \right> } + \int_{\Td} \D \mathfrak{E}(\tau)
\leq \intO{ \left[ \frac{1}{2} \frac{|\vm_0|^2}{\vr_0} + P(\vr_0) \right] }
\]
holds for a.a. $0 \leq \tau < T$, with the energy  defect
\[
\mathfrak{E} \in L^\infty(0,T; \mathcal{M}^+ (\Td)).
\]
\item {\bf Defect compatibility}
\[
\underline{d} \ \mathfrak{E} \leq {\rm tr}[ \mathfrak{R}] \leq \Ov{d}\
\mathfrak{E}\ \mbox{for some constants}\ 0 < \underline{d} \leq \Ov{d}.
\]

\end{itemize}

\end{Definition}

DMV solutions represent the {largest} class of objects
that can be identified as limits of consistent approximations of the Euler system. It is straightforward to show that
any convex combination of DMV solutions is a DMV solution. In addition, the class of DMV solutions
enjoys several remarkable properties, see \cite{FeiLMMiz}, among which:
\begin{itemize}

\item

{\bf Weak--strong uniqueness} Suppose that the Euler system
admits a Lipschitz solution $[\vr, \vm]$. Then
\[
\mathcal{V}_{t,x} = \delta_{\vr(t,x), \vm(t,x)} \ \mbox{for a.a.}\ (t,x) \in (0,T) \times \Td,
\ \mathfrak{R} = \mathfrak{E} = 0
\]
for any DMV solution emanating from the same initial data.

\item {\bf Compatibility} If
\[
(t,x) \mapsto \left< \mathcal{V}_{t,x} ;  (\tvr, \tvm) \right> \in C^1([0,T] \times \Td; \mathds{R}^{d + 1}),
\]
then
\[
\mathcal{V}_{t,x} = \delta_{\vr(t,x), \vm(t,x)} \ \mbox{for a.a.}\ (t,x) \in (0,T) \times \Td
\]
where $[\vr, \vm]$ is a classical solution of the Euler system.

\end{itemize}

Being aware of the fact that the viscosity solution $\mathcal{V}$ may inherit certain features that depend on its generating sequence, we introduce the set of \emph{observables}.

\begin{Definition}[{\bf Observables}] \label{PD4}
Let the initial data $[\vr_0, \vm_0]$ be given. We say that a function $B \in C(\mathds{R}^{d + 1})$
is
\emph{observable} for the Euler system if
\[
\left< \mathcal{V}^1_{t,x}; B (\widetilde{\vr}, \widetilde{\vm}) \right> = \left< \mathcal{V}^2_{t,x}; B (\widetilde{\vr}, \widetilde{\vm}) \right>  \ \mbox{a.a. in}\ (0,T) \times \Td
\]
whenever $\mathcal{V}^1$, $\mathcal{V}^2$ are viscosity solutions with the initial data $[\vr_0, \vm_0]$.

\end{Definition}

\begin{Remark} In general, the set of observables may also depend on the time $T$. In text below, we omit to mention explicitly this fact as $T$ is kept fixed.
	\end{Remark}

Clearly, the set of all observables
\[
\mathcal{O}[\vr_0, \vm_0] = \left\{ B \in C(\mathds{R}^d) \ \Big| \ B \ \mbox{is observable for the Euler system with the data}
\ [\vr_0, \vm_0]
\right\}
\]
is a closed linear subspace of $C(\mathds{R}^d)$ containing at least all constant functions.
Moreover,
\[
\mathcal{V}^1_{t,x} = \mathcal{V}^2_{t,x} \ \mbox{a.a. in}\ (0,T) \times \Td \text{ for all viscosity solutions }\mathcal{V}^1,\ \mathcal{V}^2
\ \Leftrightarrow \ C_c(\mathds{R}^{d + 1}) \subset \mathcal{O}[\vr_0, \vm_0].
\]
The observables are quantities that are independent of the generating sequence and as such represent an \emph{intrinsic property}
shared by all viscosity solutions starting from the same initial data $[\vr_0, \vm_0]$. In particular, the viscosity solution is unique
if and only if all functions in $C_c(\mathds{R}^{d + 1})$ are observables.

It is not excluded that the set of observables may depend on the initial data $[\vr_0, \vm_0]$. Indeed, if the Euler system admits
a $C^1$ solution, then $\mathcal{O}[\vr_0, \vm_0] = C(\mathds{R}^{d + 1})$, while it may not be the case if the barycenter of $\mathcal{V}$ is not smooth. It would be desirable though if at least the coordinates of the barycenter
\[
\vr = \left< \mathcal{V}; \tvr \right>,\ \vm = \left< \mathcal{V}; \tvm \right>
\]
were observables. In fact,
\[
\vm = \left< \mathcal{V}; \tvm \right> \ \mbox{observable} \ \Rightarrow \ \vr = \left< \mathcal{V}; \tvr \right>
\ \mbox{observable},
\]
which can be deduced from the fact that both quantities satisfy the continuity equation
\[
\partial_t \left< \mathcal{V}; \tvr \right> + \Div \left< \mathcal{V}; \tvm \right> = 0
\]
in the sense of distributions, with given initial data $\vr_0$.

Our main goal in this paper is to show that
a viscosity solution and, in particular, the
observables can be effectively computed by means of a suitable numerical method.

\subsection{Uniqueness of viscosity solutions}

Given a sequence of solutions $[\vr_n, \vm_n ]_{n=1}^\infty$ to the Navier--Stokes system
with the viscosity coefficients $\mu_n \searrow 0$, $\lambda_n \to 0$, emanating from a regularized approximation of the initial data $[\vr_{0,n}, \vm_{0,n}]_{n=1}^\infty$, we may need a subsequence to generate a Young measure that represents a viscosity solution of the
Euler system in the sense of Definition \ref{PD3}. If this is the case,
the viscosity solution is obviously not unique.
Although the numerical experiments performed and discussed in a series of
papers by Elling \cite{ELLI2, ELLI3, ELLI1} suggest this may be indeed possible, these results are based on approximating
\emph{unstable} initial data. In other words, the limit is sensitive to the choice of the approximate sequence $[\vr_{0,n}, \vm_{0,n} ]_{n=1}^\infty$. In view of these arguments, a more robust approximation of a viscosity solution will be introduced
based on the concept of S--convergence discussed in the next section.

\section{S--convergence and approximate solutions}
\label{s}

The concept of S--convergence was introduced in \cite{Fei2020A}.
The definition used in this paper is slightly different imposing more restrictions on the generating sequence.

\subsection{S--convergence}

An infinite matrix
\[
\{ s_{n,N} \}_{n=1, N=1}^\infty
\]
is called \emph{regular summation method} if the
following properties are satisfied:

\begin{gather} \label{P6}
\begin{aligned}
&\bullet \ \ 0 \leq s_{n,N} \leq \Ov{s}  \ \mbox{for any}\ n,N,  \mbox{ where } \Ov{s} = \mbox{const.} \mbox{ independent of } n \mbox{ and } N,\\[2mm]
&\bullet \ \ s_{n,N} = 0 \ \mbox{whenever}\ n > N,\\
&\bullet \ \ \sum_{n = 1}^N s_{n,N} = N \ \mbox{for any}\ N = 1,2,\dots
\end{aligned}
\end{gather}

\begin{Remark} \label{PRr1}

Note that the first condition is  usually replaced by a weaker stipulation
\[
\sup_{n \leq N} \frac{s_{n,N}}{N} \to 0 \ \mbox{as}\ N \to \infty
\]
in the literature.

\end{Remark}

\begin{Definition}[{\bf S--convergence}] \label{PD2}
\

{\bf (i)}
Let
\[
\vc{U}_n : Q \subset \mathds{R}^D \to \mathds{R}^m ,  D \mbox{ is  an integer,}\ n=1,2,\dots
\]
be a sequence of measurable (vector valued) functions. We say that $\{ \vc{U}_n \}_{n=1}^\infty$ is
\emph{{\rm S}--convergent} to a parameterized family of probability measures $\{ \mathcal{V}_y \}_{y \in Q}$,
$\mathcal{V}_y \in \mathfrak{P}(\mathds{R}^m)$ for a.a. $y \in Q$,
\[
\vc{U}_n \toS \mathcal{V} \ \mbox{with respect to a summation method}\ \{ s_{n,N} \}
\]
if
\begin{equation} \label{P7}
\intQ{
w \left[ \frac{1}{N} \sum_{n=1}^N s_{n,N} \delta_{\vc{U}_n(y)} ; \mathcal{V}_y \right] } \to 0\
\   \mbox{as}\ N \to \infty
\end{equation}
where $w$ denotes the $1-$Wasserstein distance in $\mathfrak{P}(\mathds{R}^m)$, cf. Villani~\cite{Villa}.

{\bf (ii)} We say that $\vc{U}_n$ is \emph{completely {\rm S}--convergent} to $\{ \mathcal{V}_y \}_{y \in Q}$,
\[
\vc{U}_n \toSC \mathcal{V},
\]
if
\eqref{P7} holds for any regular summation method and any subsequence $\{ \vc{U}_{n_j} \}_{j=1}^\infty$ of the original sequence.
\end{Definition}

\begin{Remark} \label{PR1}

Note that convergence in \eqref{P7}  requires a uniform $L^1$-bound on the first moments of the family
\[
\frac{1}{N} \sum_{n=1}^N s_{n,N} \delta_{\vc{U}_n(y)}.
\]
In particular, the barycenter
\[
\left< \mathcal{V} ; \widetilde{\vc{U}} \right> =
\lim_{N \to \infty} \frac{1}{N} \sum_{n=1}^N s_{n,N} \vc{U}_n
\]
is well--defined for a.a. $y \in Q$.

\end{Remark}

We proceed by stating a sufficient condition for a sequence of measurable vector valued functions to be S--convergent.

\begin{Lemma} \label{sL1}
Let
\[
\vc{U}_n : Q \subset \mathds{R}^D \to \mathds{R}^m , \ n=1,2,\dots
\]
be a sequence of measurable (vector valued) functions defined on a bounded domain $Q \subset \mathds{R}^D$ that enjoys the following properties:
\begin{itemize}
\item $\{ |\vc{U}_n | \}_{n=1}^\infty$ is equi--integrable in $Q$;
\item For any $b \in C_c(\mathds{R}^m)$ and a regular {summation} method  ${\{ s_{n,N} \}_{n=1, N=1}^\infty}$
\begin{equation} \label{s1}
\frac{1}{N} \sum_{n=1}^N s_{n,N} b(\vc{U}_n) \to \Ov{b(\vc{U})} \ \mbox{in measure in}\ Q\  \mbox{as}\ N \to \infty.
\end{equation}

Then $\{ \vc{U}_n \}_{n=1}^\infty$ is S--convergent with respect to ${\{ s_{n,N} \}_{n=1, N=1}^\infty}$, and
\[
\vc{U}_n \toS \mathcal{V},\ \mbox{where}\ \left< \mathcal{V}_y; b (\widetilde{\vc{U}}) \right> =
\Ov{b(\vc{U})}(y) \ \mbox{for a.a.}\ y \in Q.
\]

\end{itemize}

\end{Lemma}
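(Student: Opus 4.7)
The plan is to establish $\int_Q w\left[\frac{1}{N}\sum_{n=1}^N s_{n,N}\delta_{\vc{U}_n(y)};\mathcal{V}_y\right]\dy \to 0$, where $\mathcal{V}_{N,y}:=\frac{1}{N}\sum_{n=1}^N s_{n,N}\delta_{\vc{U}_n(y)}$. The first step is to verify that $\mathcal{V}_y$ is well-defined as a probability measure on $\mathds{R}^m$ with finite first moment for a.a.~$y \in Q$. Fix a countable dense subset $\{b_j\} \subset C_c(\mathds{R}^m)$. The in-measure limits $\Ov{b_j(\vc{U})}$ extend on a common full-measure subset of $Q$ to a positive linear functional on $C_c(\mathds{R}^m)$, which Riesz representation realizes as a positive Radon measure $\mathcal{V}_y$. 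To check total mass one, take cut-offs $\psi_R \in C_c(\mathds{R}^m)$ with $0 \le \psi_R \le 1$ and $\psi_R \equiv 1$ on $\{|u| \le R\}$; equi-integrability of $\{|\vc{U}_n|\}$ yields
\[
\int_Q \frac{1}{N}\sum_{n=1}^N s_{n,N}(1-\psi_R(\vc{U}_n))\,\dy \le \Ov{s}\,\frac{\sup_n\|\vc{U}_n\|_{L^1}}{R} \xrightarrow[R \to \infty]{} 0,
\]
hence $\left<\mathcal{V}_y; \psi_R\right> \to 1$ in $L^1(Q)$; the same reasoning applied to $|u|\psi_R \in C_c(\mathds{R}^m)$ together with monotone convergence shows $y \mapsto \left<\mathcal{V}_y; |\widetilde{\vc{U}}|\right> \in L^1(Q)$.

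Next I would upgrade the hypothesis from $C_c$ test functions to $|\cdot|$ by splitting $|u| = |u|\psi_R(u) + |u|(1-\psi_R(u))$: the truncated piece is covered directly, while the tail has $L^1(Q)$-norm bounded by $\Ov{s}\sup_n \int_{\{|\vc{U}_n|>R\}}|\vc{U}_n|\,\dy \to 0$ as $R \to \infty$, yielding
\[
\frac{1}{N}\sum_{n=1}^N s_{n,N}|\vc{U}_n| \longrightarrow \left<\mathcal{V}_y; |\widetilde{\vc{U}}|\right> \quad \text{in } L^1(Q).
\]
Arguing by contradiction, suppose a subsequence $\{N_k\}$ satisfies $\int_Q w\,\dy \ge \epsilon > 0$. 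A diagonal extraction over $\{b_j\}$ combined with this $L^1$-convergence of first moments produces a further subsequence $\{N_{k_\ell}\}$ along which $\left<\mathcal{V}_{N_{k_\ell}, y}; b_j\right> \to \left<\mathcal{V}_y; b_j\right>$ for every $j$ and $\frac{1}{N_{k_\ell}}\sum s_{n,N_{k_\ell}}|\vc{U}_n(y)| \to \left<\mathcal{V}_y; |\widetilde{\vc{U}}|\right>$ simultaneously a.e.~$y$. For such $y$ the bounded first moment (a convergent sequence) provides tightness of $\{\mathcal{V}_{N_{k_\ell}, y}\}$ via Markov's inequality; tightness plus pointwise convergence on the dense set $\{b_j\}$ gives narrow convergence to $\mathcal{V}_y$; and narrow convergence plus first-moment convergence is equivalent to $w[\mathcal{V}_{N_{k_\ell}, y}; \mathcal{V}_y] \to 0$.

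Finally, the triangle inequality $w[\mathcal{V}_{N,y}; \mathcal{V}_y] \le \frac{1}{N}\sum s_{n,N}|\vc{U}_n(y)| + \left<\mathcal{V}_y; |\widetilde{\vc{U}}|\right>$ (obtained by inserting $\delta_0$) dominates $w$ by a family that is equi-integrable in $y$, so Vitali's convergence theorem upgrades the a.e.~convergence along $\{N_{k_\ell}\}$ to $L^1(Q)$-convergence, contradicting $\int_Q w\,\dy \ge \epsilon$. The principal obstacle I anticipate is coordinating the subsequence extraction: the pointwise characterization of $W_1$-convergence (narrow convergence plus first-moment convergence) must hold simultaneously for a.a.~$y$, so one must exploit both the countable density of $C_c(\mathds{R}^m)$ and the upgrade of in-measure convergence of the first moment to $L^1$-convergence in order to extract a single subsequence along which all required modes converge pointwise.
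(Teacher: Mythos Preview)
Your proposal is correct and follows essentially the same route as the paper: extend the hypothesis from $C_c$ to the first moment $|\cdot|$ via truncation and equi--integrability, identify $\mathcal{V}_y$, then argue by contradiction using a subsequence on which narrow convergence together with convergence of first moments yields pointwise $W_1$--convergence (Villani, Theorem~6.9), and finally pass from pointwise to $L^1$ through equi--integrability of $y\mapsto w[\mathcal{V}_{N,y};\mathcal{V}_y]$. The paper organizes the contradiction around convergence \emph{in measure} of $w$ and uses the dual metric $d_{W^*}$ to encode narrow convergence, whereas you go directly for the $L^1$ norm and use a countable dense family with diagonal extraction plus Vitali; these are cosmetic differences, not substantive ones.
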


\begin{proof}

As $\{ |\vc{U}_n | \}_{n=1}^\infty$ is equi--integrable we can by consider a sequence $b_k\in C_c(\mathds{R}^m)$ with $b_k(\vc{U}) =b(\vc{U})$ for $|\vc{U}|<k$ and $|b_k|<|b|$ to see that validity of \eqref{s1} can be extended to any function
\[
b \in C(\mathds{R}^m),\ |b(\vc{U})| \aleq 1 + |\vc{U}|.
\]

Next, we introduce a parametrized measure $\mathcal{V}$ as
\[
\left< \mathcal{V}_y , b (\widetilde{\vc{U}}) \right> = \Ov{b(\vc{U})} (y) \ \mbox{for a.a.}\ y \in Q.
\]
The convergence in \eqref{s1} is equivalent to
\begin{equation} \label{s2}
d_{W^*} \left[ \frac{1}{N} \sum_{n=1}^N s_{n,N} \delta_{\vc{U}_n(y)}; \mathcal{V}_y \right] \to 0
\ \mbox{in measure for}\ y \in Q,
\end{equation}
where $d_{W^*}$ is the dual metric,
\[
\begin{split}
d_{W^*} [{\mathcal{V}_1, \mathcal{V}_2}] &= \sum_{k=1}^\infty \frac{1}{2^k} \left| \left<{\mathcal{V}_1}; b_k(\tilde{U})\right> -
\left<{\mathcal{V}_2}; b_{k}(\tilde{U}) \right> \right|, \\ \{ b_k \}_{k=1}^\infty &\subset C_c(\mathds{R}^m)
\ \mbox{a dense set in the unit ball in}\ C_0(\mathds{R}^m).
\end{split}
\]
Moreover, we also have convergence of the first moments,
\begin{equation} \label{s3}
\frac{1}{N} \sum_{n=1}^N s_{n,N}|\vc{U}_n (y)| \to \left< \mathcal{V}_y; \widetilde{ |\vc{U}|} \right>
\ \mbox{in measure for}\ y \in Q.
\end{equation}

As the sequence $\{ \vc{U}_n \}_{n=1}^\infty$ is equi--integrable, the function
\[
y \mapsto w \left[ \frac{1}{N} \sum_{n=1}^N s_{n,N} \delta_{\vc{U}_n(y)}; \mathcal{V}_y \right]
\]
is equi--integrable as well. Consequently, to deduce the desired conclusion \eqref{P7}, it is enough to observe that
\[
y \mapsto w \left[ \frac{1}{N} \sum_{n=1}^N s_{n,N} \delta_{\vc{U}_n(y)}; \mathcal{V}_y \right] \to 0
\ \mbox{in measure for}\ y \in Q.
\]
Assuming the contrary, we get $\ep, \delta > 0$ and a subsequence $N_k \to \infty$ such that
\begin{equation} \label{s4}
\left| y \in Q \ \Big| \ w \left[ \frac{1}{N_k} \sum_{n=1}^{N_k} s_{n,N_k} \delta_{\vc{U}_n(y)}; \mathcal{V}_y \right] > \ep \right|
> \delta \  \  \mbox{for }\ k \to \infty.
\end{equation}
On the other hand,  in accordance with \eqref{s2}, \eqref{s3} and
Villani \cite[Theorem 6.9]{Villa} , $\{ N_k \}_{k=1}^\infty$ contains a subsequence (not relabeled) such that
\[
w \left[ \frac{1}{N_k} \sum_{n=1}^{N_k} s_{n,N_k} \delta_{\vc{U}_n(y)}; \mathcal{V}_y \right] \to 0
\ \mbox{as}\ k \to \infty \ \mbox{for a.a.}\ y \in Q
\]
in contrast with \eqref{s4}.

\end{proof}

Next we focus on sufficient conditions that would guarantee \eqref{s1}. As the composition $b(\vc{U}_n)$ is bounded,
it is enough to consider uniformly bounded sequences of measurable functions. We say that a set
$\mathcal{S} \subset \mathds{N} \times \mathds{N}$ is \emph{statistically significant} if
\[
\frac{ \# \{ (n,m) \in \mathcal{S} \ \Big| \ n,m \leq N \} }{N^2} \to 1 \
\ \mbox{as}\ N \to \infty.
\]

\begin{Lemma} \label{sL2}
Let $Q \subset \mathds{R}^D$ be a bounded domain and let
\[
{V}_n : Q \to \mathds{R},\ \|{V}_n \|_{L^\infty(Q)} \aleq 1 \ \mbox{uniformly for}\ n =1,2,\dots
\]
be a bounded sequence of measurable functions. Suppose there exists $V \in L^\infty(Q)$ such that the set
\[
\mathcal{S}(\ep)  = \left\{ (n,m) \ \Big|\  s_{n, M} s_{m, M} \left|\intQ{ (V_n - V)(V_m - V) } \right| < \ep \; \mbox{ for all } M\in\mathds{N} \right\}
\]
is statistically significant for any $\ep > 0$.

Then
\begin{equation} \label{s5}
\frac{1}{N} \sum_{n=1}^N s_{n,N} {V}_n \to V \ \mbox{in}\ L^q(Q) \ \mbox{as}\ N \to \infty,\ 1 \leq q < \infty.
\end{equation}

\end{Lemma}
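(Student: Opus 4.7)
The plan is to reduce everything to an $L^2$ estimate via the standard trick for ``variance'' arguments on averaged sequences, exploiting the fact that $\sum_{n=1}^N s_{n,N} = N$ so that $V$ is the exact ``target'' under the summation.

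First I would note that, since $\sum_{n=1}^N s_{n,N}/N = 1$, we can rewrite
\begin{equation*}
\frac{1}{N}\sum_{n=1}^N s_{n,N} V_n - V = \frac{1}{N}\sum_{n=1}^N s_{n,N}(V_n - V).
\end{equation*}
Squaring and integrating over $Q$, one gets the double-sum identity
\begin{equation*}
\int_Q \left| \frac{1}{N}\sum_{n=1}^N s_{n,N} V_n - V \right|^2 \,\mathrm{d}y = \frac{1}{N^2} \sum_{n,m=1}^{N} s_{n,N}\, s_{m,N} \int_Q (V_n - V)(V_m - V)\,\mathrm{d}y.
\end{equation*}
This is the main computational step; everything that follows is bookkeeping on the right-hand side.

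Next, fix $\varepsilon > 0$ and split the double sum into two pieces according as $(n,m) \in \mathcal{S}(\varepsilon)$ or not, applying the hypothesis with $M = N$. On $\mathcal{S}(\varepsilon)$ each term is bounded in absolute value by $\varepsilon$, and there are at most $N^2$ such pairs, contributing at most $\varepsilon$ after dividing by $N^2$. On the complement, each term is crudely controlled using $0 \le s_{n,N}\le \overline{s}$ and $\|V_n\|_{L^\infty}, \|V\|_{L^\infty} \lesssim 1$, so
\begin{equation*}
s_{n,N} s_{m,N}\left|\int_Q (V_n-V)(V_m-V)\,\mathrm{d}y\right| \le \overline{s}^{\,2}\, C(\|V\|_\infty, |Q|).
\end{equation*}
By statistical significance of $\mathcal{S}(\varepsilon)$, the number of pairs in the complement with $n,m\le N$ is $o(N^2)$, so this contribution vanishes as $N \to \infty$. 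Combining the two parts,
\begin{equation*}
\limsup_{N\to\infty}\int_Q \left| \frac{1}{N}\sum_{n=1}^N s_{n,N} V_n - V \right|^2 \,\mathrm{d}y \le \varepsilon,
\end{equation*}
and since $\varepsilon>0$ was arbitrary we obtain convergence in $L^2(Q)$.

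Finally, I would upgrade $L^2$-convergence to $L^q$ for arbitrary $q \in [1,\infty)$ by interpolation: the sequence $\frac{1}{N}\sum_{n=1}^N s_{n,N} V_n - V$ is uniformly bounded in $L^\infty(Q)$ (again by $\overline{s}$ and the $L^\infty$ bound on $V_n$ and $V$), so the standard inequality $\|f\|_{L^q} \le \|f\|_{L^2}^{\theta}\|f\|_{L^\infty}^{1-\theta}$ for $q \ge 2$, together with the trivial bound $\|f\|_{L^1}\le |Q|^{1/2}\|f\|_{L^2}$ for $q<2$, closes the argument. I do not foresee any genuine obstacle here; the one point that needs minor care is making sure that the condition in $\mathcal{S}(\varepsilon)$ is ``$M$-uniform'', which is precisely why we can freely apply it with $M = N$ inside the double sum.
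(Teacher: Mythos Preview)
Your proof is correct and follows essentially the same approach as the paper's: both expand the $L^2$ norm of the averaged difference as a double sum, split the pairs $(n,m)$ according to whether the weighted correlation is $<\varepsilon$ or not, use statistical significance to show the ``bad'' set has density $o(1)$, and then upgrade $L^2$ to $L^q$ via the uniform $L^\infty$ bound. The paper phrases the splitting in terms of the exceptional set $K_N(\varepsilon)=\{(n,m)\le N: s_{n,N}s_{m,N}|\int_Q(V_n-V)(V_m-V)|\ge\varepsilon\}$ rather than $\mathcal{S}(\varepsilon)$ itself, but since $K_N(\varepsilon)$ is contained in the complement of $\mathcal{S}(\varepsilon)$ this is the same argument, and your remark about the $M$-uniformity being exactly what lets you take $M=N$ is on point.
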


\begin{proof}

As $\{ V_n \}_{n=1}^\infty$ is uniformly bounded, it is enough to show
\[
\frac{1}{N} \sum_{n=1}^N s_{n,N} {V}_n \to V \ \mbox{in}\ L^2(Q)
\]
in other words
\[
\intQ{ \left| \frac{1}{N} \sum_{n=1}^N s_{n,N} ( V_n - V) \right|^2 } \to 0.
\]
Denote
\[
K_{N}(\ep) = \left\{ n,m \leq N \Big| \ s_{n,N} s_{m,N} \left|\intQ{ (V_n - {V})(V_m - {V}) } \right|  \geq \ep   \right\}.
\]
We have
\[
\begin{split}
&\intQ{ \left| \frac{1}{N} \sum_{n=1}^N s_{n,N} ( V_n - V) \right|^2 } \leq
\frac{1}{N^2}  \sum_{n,m=1}^N s_{n,N} s_{m,N} \left| \intQ{ (V_n - V) (V_m - V) } \right| \\
\\ &\leq \frac{1}{N^2}  \sum_{n,m=1, (n,m) \in K_N(\ep) }^N \Ov{s}^2 \left| \intQ{ ({V}_n - {V}) ({V}_m - {V}) } \right|
+ \ep\\ &\leq 4 \Ov{s}^2 {|Q| \max\left\{\| V \|_{L^\infty(Q)},\ \sup_{n \geq 1} \| V_n \|_{L^\infty(Q)}\right\}^2} \frac{ \# K_N(\ep) }{N^2} + \ep,
\end{split}
\]
where, in accordance with our hypothesis,
\[
\frac{ \# K_N(\ep) }{N^2} \to 0 \ \mbox{as}\ N \to \infty.
\]
As $\ep > 0$ was arbitrary, the desired result follows.
\end{proof}

\begin{Corollary} \label{sC1}
Let $Q \subset \mathds{R}^D$ be a bounded domain and let
\[
{V}_n : Q \to  \mathds{R},\ \|{V}_n \|_{L^\infty(Q)} \aleq 1 \ \mbox{uniformly for}\ n =1,2,\dots
\]
be a bounded sequence of measurable functions. Suppose there exists $V \in L^\infty(Q)$ such that the set
\[
\mathcal{S}_\ep = \left\{ (n,m) \ \Big|\  \left|\intQ{ (V_n - V)(V_m - V) } \right| < \ep \right\}
\]
is statistically significant for any $\ep > 0$.

Then
\[
\frac{1}{N} \sum_{n=1}^N s_{n,N} {V}_n \to V \ \mbox{in}\ L^q(Q) \ \mbox{as}\ N \to \infty,\ 1 \leq q < \infty.
\]
for any regular summation method ${\{ s_{n,N} \}_{n=1, N=1}^\infty}$.

\end{Corollary}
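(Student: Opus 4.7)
The plan is to derive Corollary~\ref{sC1} as a direct consequence of Lemma~\ref{sL2}, using only the uniform upper bound $0 \leq s_{n,N} \leq \Ov{s}$ from the definition \eqref{P6} of a regular summation method. Note that the hypothesis of the corollary, which controls $\left|\intQ{ (V_n - V)(V_m - V) }\right|$ \emph{without} the weights, is formally stronger than the hypothesis of Lemma~\ref{sL2}, which only requires smallness of the weighted quantity $s_{n,M} s_{m,M} \left|\intQ{ (V_n - V)(V_m - V) }\right|$ uniformly in $M$.

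To make this precise, I would fix an arbitrary regular summation method $\{ s_{n,N} \}$ and any $\ep > 0$. Setting $\ep' = \ep / \Ov{s}^2 > 0$ and exploiting $s_{n,M} s_{m,M} \leq \Ov{s}^2$ for every $M \in \mathds{N}$, one obtains the pointwise implication
\[
\left|\intQ{ (V_n - V)(V_m - V) }\right| < \ep' \ \Longrightarrow \ s_{n,M} s_{m,M} \left|\intQ{ (V_n - V)(V_m - V) }\right| < \ep \ \mbox{for all }\, M \in \mathds{N},
\]
which yields the inclusion $\mathcal{S}_{\ep'} \subset \mathcal{S}(\ep)$, with $\mathcal{S}(\ep)$ as in Lemma~\ref{sL2}. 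Since $\mathcal{S}_{\ep'}$ is statistically significant by the assumption of the corollary, and any superset of a statistically significant subset of $\mathds{N} \times \mathds{N}$ is again statistically significant (the complement within $\{(n,m) : n,m \leq N\}$ can only shrink), the set $\mathcal{S}(\ep)$ is itself statistically significant.

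Thus the hypotheses of Lemma~\ref{sL2} are satisfied for the chosen summation method, and applying the lemma gives
\[
\frac{1}{N} \sum_{n=1}^N s_{n,N} V_n \to V \ \mbox{in}\ L^q(Q) \ \mbox{as}\ N \to \infty,\ 1 \leq q < \infty.
\]
Since the regular summation method was arbitrary, the corollary follows. No serious obstacle is anticipated: the argument is essentially bookkeeping around the constant $\Ov{s}$ in \eqref{P6}, and the only point worth emphasizing is that this constant depends only on the summation method, and in particular is independent of $\ep$, so that $\ep' = \ep/\Ov{s}^2$ can be used uniformly.
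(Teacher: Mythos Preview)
Your proposal is correct and is exactly the intended derivation: the paper states the result as a corollary of Lemma~\ref{sL2} without further proof, and your argument using the uniform bound $s_{n,M}s_{m,M}\leq\Ov{s}^2$ to obtain the inclusion $\mathcal{S}_{\ep/\Ov{s}^2}\subset\mathcal{S}(\ep)$ is precisely the bookkeeping that makes this immediate.
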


\subsection{Asymptotic stationarity}

The disadvantage of Lemma \ref{sL2} is that the sufficient condition for convergence depends on the {\it a priori} unknown limit
$V$. To remove the problem we revisit the concept of asymptotic stationarity introduced in \cite{Fei2020A}.

\begin{Proposition} \label{sP2}
Let $Q \subset \mathds{R}^D$ be a bounded domain and let $\{ V_n \}_{n=1}^\infty$
be a bounded sequence of measurable functions,
\[
V_n \to V \ \mbox{weakly-(*) in}\ L^\infty (Q).
\]
In addition, suppose that for any $\ep > 0$, $K \geq 1$, there exists $k = k(\ep, K) \geq K$ such that
\begin{equation} \label{s6}
\begin{split}
&\left\{ (n, m) \Big|\
\left| \intQ{ \Big[ V_{n} V_m - V_{k+|n-m|} V_k \Big] } \right| < \ep \right\}
\\ &\mbox{is statistically significant.}
\end{split}
\end{equation}

Then
\[
\frac{1}{N} \sum_{n=1}^N s_{n,N} {V}_n \to V \ \mbox{in}\ L^q(Q) \ \mbox{as}\ N \to \infty,\ 1 \leq q < \infty
\]
for any regular summation method $\{ s_{n,N} \}_{n,N = 1}^\infty$.
\end{Proposition}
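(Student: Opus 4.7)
The plan is to verify the hypothesis of Corollary~\ref{sC1}: for every $\ep > 0$ the set
\[
\mathcal{S}_\ep = \left\{ (n,m) \ \Big|\ \left| \intQ{ (V_n - V)(V_m - V) } \right| < \ep \right\}
\]
must be shown to be statistically significant. Once this is in hand, the strong $L^q$ convergence for every regular summation method follows immediately from Corollary~\ref{sC1}. Expanding the product,
\[
\intQ{ (V_n - V)(V_m - V) } = \intQ{ V_n V_m } - \intQ{ V_n V } - \intQ{ V_m V } + \intQ{ V^2 },
\]
and since $V_n \to V$ weakly-$(*)$ in $L^\infty$ (so in particular tested against $V \in L^1(Q)$), there is $N_0 = N_0(\ep)$ such that both middle terms lie within $\ep$ of $\intQ{ V^2 }$ whenever $n, m \geq N_0$. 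The task therefore reduces to producing a statistically significant collection of pairs on which $\intQ{ V_n V_m }$ is close to $\intQ{ V^2 }$.

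For this I chain three approximations. First, by weak-$(*)$ convergence, select $K = K(\ep)$ large enough so that $|\intQ{ V V_k } - \intQ{ V^2 }| < \ep$ for all $k \geq K$. Second, invoke the asymptotic stationarity hypothesis~\eqref{s6} to obtain some $k \geq K$ for which
\[
\mathcal{T}_1 = \left\{ (n,m) \ \Big|\ \left| \intQ{ V_n V_m } - \intQ{ V_{k + |n-m|} V_k } \right| < \ep \right\}
\]
is statistically significant. Third, with this $k$ now fixed, the weak-$(*)$ convergence $V_{k+j} \to V$ as $j \to \infty$, tested against $V_k \in L^1(Q)$, yields $j_0 = j_0(\ep, k)$ such that $|\intQ{ V_{k + j} V_k } - \intQ{ V V_k }| < \ep$ for all $j \geq j_0$. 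The set $\mathcal{T}_2 = \{(n, m) : |n - m| \geq j_0\}$ is statistically significant, its complement within $[1,N]^2$ having cardinality at most $(2 j_0 + 1) N = o(N^2)$.

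On the intersection $\mathcal{T}_1 \cap \mathcal{T}_2 \cap \{ n, m \geq N_0 \}$, the triangle inequality chains the three estimates into $|\intQ{ V_n V_m } - \intQ{ V^2 }| < 3 \ep$, and combining with the bounds on $\intQ{ V_n V }$ and $\intQ{ V_m V }$ gives $|\intQ{ (V_n - V)(V_m - V) }| < 5 \ep$. A finite intersection of statistically significant sets is again statistically significant, since the union of their complements still has asymptotic density zero in $\mathds{N}^2$. Hence $\mathcal{S}_{5 \ep}$ is statistically significant for every $\ep > 0$, which is equivalent to the statistical significance of $\mathcal{S}_\ep$ for every $\ep > 0$, and Corollary~\ref{sC1} delivers the conclusion.

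The main subtlety is the ordering of the quantifiers: $K$ must be fixed first (based on how quickly $\intQ{ V V_k }$ approaches $\intQ{ V^2 }$), and only then can \eqref{s6} be invoked to produce $k \geq K$; afterwards $j_0$ is chosen as a function of this $k$. All the remaining ingredients -- the $L^\infty$ bound on $V$ via lower semicontinuity of the weak-$(*)$ limit, the admissibility of $V, V_k$ as $L^1$ test functions on the bounded domain $Q$, and the counting estimate for the diagonal strip -- are routine.
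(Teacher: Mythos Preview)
Your argument is correct and follows essentially the same strategy as the paper: reduce to Corollary~\ref{sC1} by combining the asymptotic stationarity hypothesis~\eqref{s6} with weak-$(*)$ convergence and the negligibility of the diagonal strip $\{|n-m| < j_0\}$. The only organisational difference is that the paper first reduces to the case $V \equiv 0$ (by checking that hypothesis~\eqref{s6} is inherited by the shifted sequence $V_n - V$), which absorbs your cross-term estimates for $\intQ{V_n V}$, $\intQ{V_m V}$, and $\intQ{V V_k}$ into a single preliminary step and leaves only $\intQ{V_n V_m}$ to control; you instead expand the product $(V_n - V)(V_m - V)$ and track all four terms explicitly, invoking weak-$(*)$ convergence separately for each. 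Both routes are equally valid and rely on the same ingredients in the same order of quantifiers.
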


\begin{proof}

First observe that it is enough to assume ${V} = 0$. Indeed replacing ${V_n} \approx (V_n - V)$ we compute
\begin{align}
&\intQ{ \Big[ (V_{n} - V)(V_m - V) - (V_{k+|n-m|} - V)(V_k - V) \Big] } =
\intQ{ \Big[ V_{n} V_m - V_{k+|n-m|} V_k \Big] }\nonumber \\
&+ \intQ{ V \Big( V_{k} + V_{k+|n-m|} - V_m - V_n) \Big) }.
\label{Blb1}
\end{align}
Next, by the weak-(*) convergence of $V_n$, we have the following property. For any $\ep > 0$  there exists $K = K(\ep) \geq 1$,   such that
\begin{equation} \label{Blb2}
\left| \intQ{ V {\Big( V_{k} + V_{k+|n-m|} - V_m - V_n \Big)} } \right| < \frac{\ep}{2}
\end{equation}
whenever $m,n,k \geq K$.  Now, hypothesis \eqref{s6}, together with \eqref{Blb1}, \eqref{Blb2}, imply that the set
\[
\left\{ (n, m) \Big|\ n,m \geq K,
\left| \intQ{ \Big[ (V_{n} - V) (V_m - V) - (V_{k+|n-m|} - V)( V_k - V) \Big] } \right| < \frac{\ep}{2} \right\} 
\]
is still statistically significant for $k = k\left( \frac{\ep}{2}, K \right)$; whence we may suppose $V \equiv 0$.

Now, fix $\ep > 0$ with the associated $k( \frac{ \ep}{2})$. It follows from hypothesis \eqref{s6} that
\begin{equation} \label{s7}
\left\{ (n, m) \Big|\ |n - m| \geq L,
\left| \intQ{ \Big[ V_{n} V_m - V_{k+|n-m|} V_k \Big] } \right| <  \frac{\ep}{2} \right\}
\end{equation}
is statistically significant for any $L \geq 1$. As $V_n \to 0$ weakly-(*) in $L^\infty(Q)$, we can fix $L$ so that
\[
\left| \intQ{ V_k V_{k + |n-m|} } \right| <  \frac{\ep}{2} \ \mbox{whenever}\ |n - m| \geq L.
\]
 Keep in mind that $k=k(\frac{\eps}{2})$ is fixed at this stage.

Finally, it follows from \eqref{s7} that
\[
\left\{ (n, m) \Big|\
\left| \intQ{  V_{n} V_m } \right| < \ep \right\}
\]
is statistically significant for any $\ep > 0$ which, in view of Corollary \ref{sC1}, yields the desired conclusion.

\end{proof}

\subsection{S--convergence vs. strong convergence}

If
\[
\vc{U}_n \to \vc{U} \ \mbox{in}\ L^1(Q; \mathds{R}^D),
\]
then it is easy to see, cf. \cite[Corollary 3.4]{Fei2020A}, that
\[
\vc{U}_n \toS \delta_{\vc{U}}\ \mbox{for any regular summation method.}
\]
Moreover, as the (strong) limit is unique, we get
\[
\vc{U}_n \toSC \delta_{\vc{U}}.
\]

A converse statement reads:

\begin{Lemma} \label{sL8}

Let
\[
\vc{U}_n \to \vc{U} \ \mbox{weakly in}\ L^1(Q; \mathds{R}^D),
\]
and
\[
\frac{1}{N} \sum_{n=1}^N s_{n,N} b(\vc{U}_n) \to b(\vc{U}) \ \mbox{weakly-(*) in}\ L^\infty(Q)
\]
for any $b \in C_c(\mathds{R}^D)$ and some regular summation method $\{ s_{n,N} \}$.

Then there is a subsequence $\{ \vc{U}_{n_j} \}_{j=1}^\infty$ such that
\[
\vc{U}_{n_j} \to \vc{U} \ \mbox{in}\ L^1(Q; \mathds{R}^D).
\]

\end{Lemma}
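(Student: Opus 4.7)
The plan hinges on a quadratic identity that translates the hypothesized weak-$(*)$ convergence into strong $L^2$ control after averaging. First I note that weak $L^1$ convergence $\vc{U}_n \to \vc{U}$ yields equi-integrability of $\{\vc{U}_n\}$ by the Dunford--Pettis theorem. Now fix any $b \in C_c(\mathds{R}^D)$; since $b^2 \in C_c(\mathds{R}^D)$ as well, the hypothesis may be applied to both $b$ and $b^2$. Combined with $\sum_{n=1}^N s_{n,N} = N$, this gives the pointwise identity
\[
\frac{1}{N}\sum_{n=1}^N s_{n,N} \bigl(b(\vc{U}_n) - b(\vc{U})\bigr)^2
= \frac{1}{N}\sum_{n=1}^N s_{n,N} b^2(\vc{U}_n) - 2 b(\vc{U})\,\frac{1}{N}\sum_{n=1}^N s_{n,N} b(\vc{U}_n) + b(\vc{U})^2,
\]
and the right-hand side converges weakly-$(*)$ in $L^\infty(Q)$ to $b(\vc{U})^2 - 2 b(\vc{U})^2 + b(\vc{U})^2 = 0$, using that multiplication by the fixed $L^\infty$ function $b(\vc{U})$ preserves weak-$(*)$ limits. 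The left-hand side is non-negative and $Q$ is bounded, so testing against the constant $1 \in L^1(Q)$ upgrades this weak-$(*)$ convergence to strong $L^1$ convergence:
\[
\frac{1}{N}\sum_{n=1}^N s_{n,N}\,\alpha_n^b \to 0,\qquad \alpha_n^b := \|b(\vc{U}_n) - b(\vc{U})\|_{L^2(Q)}^2 \geq 0.
\]

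Second, because $\alpha_n^b \geq 0$ and the summation satisfies $s_{n,N} \leq \bar s$ with $\frac{1}{N}\sum_{n=1}^N s_{n,N} = 1$, one must have $\liminf_n \alpha_n^b = 0$: otherwise, with $\alpha_n^b \geq c/2$ for $n \geq N_0$, the head $\frac{1}{N}\sum_{n<N_0}s_{n,N} \leq \bar s\,N_0/N \to 0$ would force $\frac{1}{N}\sum_{n=1}^N s_{n,N}\alpha_n^b \geq c/2 + o(1)$, contradicting the vanishing average. Hence a $b$-dependent subsequence $\{n_j^{(b)}\}$ satisfies $b(\vc{U}_{n_j^{(b)}}) \to b(\vc{U})$ strongly in $L^2(Q)$. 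Exploiting the separability of $C_c(\mathds{R}^D)$ in the sup norm, a diagonal extraction over a countable dense family $\{b_k\}$ produces a single subsequence $\{n_j\}$ for which $b(\vc{U}_{n_j}) \to b(\vc{U})$ in $L^2(Q)$ for every $b \in C_c(\mathds{R}^D)$.

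Finally I upgrade this to $L^1$ convergence of $\vc{U}_{n_j}$ to $\vc{U}$. The bridge is convergence in measure. If it failed, some $\eps,\delta>0$ would give $|\{y:|\vc{U}_{n_j}(y) - \vc{U}(y)|>\eps\}|>\delta$ along a sub-subsequence. Tightness of $\{\vc{U}_n\}$ from equi-integrability, together with $\vc{U} \in L^1$, localizes the offending set into $\{|\vc{U}_{n_j}|,|\vc{U}| \leq M\}$ for $M$ large, keeping measure $\geq \delta/2$. On $B(0,M)$ a finite family of $C_c$ bump functions forming an $\eps/4$-net separates points: for each such $y$ some bump $b_k$ in the family satisfies $b_k(\vc{U}(y)) = 1$ and $b_k(\vc{U}_{n_j}(y)) = 0$, so one $b_k$ from the finite list carries a jump of size $1$ on a set of measure bounded below uniformly in $j$, contradicting the strong $L^2$ convergence. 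Once convergence in measure is established, Vitali's theorem together with equi-integrability delivers $\vc{U}_{n_j} \to \vc{U}$ in $L^1(Q;\mathds{R}^D)$. I expect the main obstacle to be this last localize-and-separate step; the quadratic identity is essentially one line, and the subsequence extraction is a routine Silverman--Toeplitz-type exercise, but the conversion of $L^2$ convergence of test composites back to convergence in measure of the underlying vector fields requires careful handling of the unbounded-value regime.
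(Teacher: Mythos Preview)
Your proposal is correct and follows essentially the same approach as the paper: both expand the square via the hypothesis applied to $b$ and $b^2$, extract a subsequence with $b(\vc{U}_{n_k}) \to b(\vc{U})$ in $L^2(Q)$, diagonalize over a countable dense family in $C_c(\mathds{R}^D)$, and then invoke Dunford--Pettis equi-integrability to conclude. The paper's proof is terser---it does not spell out the $\liminf$ extraction from the weighted averages, nor the convergence-in-measure bridge in the last step, simply writing that equi-integrability ``yields the desired conclusion''---so your version can be read as a fully expanded variant of the same argument (one cosmetic slip: the lower bound in your $\liminf$ step should read $\geq c/2 - o(1)$, not $+\,o(1)$).
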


\begin{proof}

Given $b \in C_c(\mathds{R}^D)$, we have
\[
\frac{1}{N} \sum_{n=1}^N s_{n,N} b(\vc{U}_n) \to b(\vc{U}),\
\frac{1}{N} \sum_{n=1}^N s_{n,N} {|b(\vc{U}_n)|^2} \to |b(\vc{U})|^2 \ \mbox{weakly-(*) in}\ L^\infty(Q),
\]
in particular
\[
\frac{1}{N} \sum_{n=1}^N s_{n,N} \intQ{ |b(\vc{U}_n) - b(\vc{U})|^2 } \to 0 \ \mbox{as}\ N \to \infty.
\]
Consequently, there is a subsequence $n_k \to \infty$ such that
\[
b(\vc{U}_{n_k}) \to b(\vc{U}) \ \mbox{in}\ L^2(Q).
\]

Repeating the same argument {for} a family $\{ b_m \}_{m=1}^\infty$ of functions $b$ dense in ${C_c(\mathds{R}^D)}$ we obtain a subsequence
$n_j$ such that
\[
b(\vc{U}_{n_j}) \to b(\vc{U}) \ \mbox{in}\ L^2(Q) \ \mbox{for any}\ b \in C_c(\mathds{R}^D).
\]
Due to the Dunford-Pettis theorem the sequence $\{\vc{U}_{n}\}_{n=1}^\infty$ is equi--integrable, which yields the desired conclusion.

\end{proof}

\subsection{S--convergent subsequence principle}

The following result was essentially proven by Balder \cite{Bald} in the case of
the Ces\` aro summation method $s_{n,N} = 1$ for $n \leq N$. In contrast with \cite{Bald},  our approach is based on the Banach-Saks theorem, while \cite{Bald}  uses the Koml\'os theorem not available for arbitrary summation method.

\begin{Proposition}[{\bf Subsequence principle}] \label{sP3}

Let
\[
\vc{U}_n: Q \subset \mathds{R}^D \mapsto \mathds{R}^m ,\ Q \subset \mathds{R}^D \ \mbox{a bounded domain}
\]
be  an equi--integrable sequence of $L^1$ functions.

Then there are a subsequence $\{ \vc{U}_{n_j} \}_{j=1}^\infty$ and
a parametrized family of probability measures $\{ \mathcal{V}_y \}_{y \in Q}\ \subset \mathfrak{P}(\mathds{R}^m)$ such that
\[
\vc{U}_{n_j} \toSC \mathcal{V}.
\]

\end{Proposition}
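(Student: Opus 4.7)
The plan is to produce the S-limit as a Young measure generated by a suitably refined subsequence: first the fundamental theorem on Young measures fixes a candidate limit, and then a diagonal Banach--Saks extraction forces enough control on bilinear $L^2$-interactions to feed Corollary \ref{sC1} for every composite $b(\vc{U}_{n_j})$ and every further subsequence. Since $\{|\vc{U}_n|\}_{n=1}^\infty$ is equi-integrable on the bounded domain $Q$, a standard Young-measure argument yields a subsequence (still denoted $\{\vc{U}_n\}_{n=1}^\infty$) and a weakly measurable family $\{\mathcal{V}_y\}_{y \in Q} \subset \mathfrak{P}(\mathds{R}^m)$ with
\[
b(\vc{U}_n) \to \Ov{b(\vc{U})}(y) := \left<\mathcal{V}_y; b(\widetilde{\vc{U}})\right> \quad \mbox{weakly-(*) in } L^\infty(Q)
\]
for every $b \in C_c(\mathds{R}^m)$. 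This family will serve as the claimed S-limit.

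The heart of the argument is an iterated Banach--Saks selection. Fix a countable dense family $\{b_k\}_{k \geq 1} \subset C_c(\mathds{R}^m)$ and put $w_{k,n} := b_k(\vc{U}_n) - \Ov{b_k(\vc{U})}$. Each sequence $\{w_{k,n}\}_n$ is uniformly bounded and weakly null in $L^2(Q)$, so the Hilbert-space Banach--Saks extraction, applied inductively in $k$ and then diagonally, produces a subsequence $\{\vc{U}_{n_j}\}_{j=1}^\infty$ such that
\[
\left| \int_Q w_{k,n_i} \, w_{l,n_j} \, \mathrm{d}y \right| \leq 2^{-\max(i,j)} \qquad \mbox{whenever } i \neq j, \ k, l \leq \min(i,j).
\]
Crucially this bound is hereditary: for any sub-subsequence $\{n_{j_p}\}_{p=1}^\infty$ the relation $n_{j_p} \geq p$ ensures that the analogous inequality continues to hold after relabelling.

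Given an arbitrary $b \in C_c(\mathds{R}^m)$ and $\ep > 0$, choose $b_k$ from the dense family with $\|b - b_k\|_\infty$ so small that
\[
\left| \int_Q (b(\vc{U}_{n_{j_p}}) - \Ov{b(\vc{U})})(b(\vc{U}_{n_{j_q}}) - \Ov{b(\vc{U})}) \, \mathrm{d}y - \int_Q w_{k,n_{j_p}} w_{k,n_{j_q}} \, \mathrm{d}y \right| < \tfrac{\ep}{2}
\]
uniformly in $p, q$; this is possible by the uniform boundedness of the composites. Combined with the previous step, every pair $(p,q)$ with $p \neq q$ and $\min(p,q)$ sufficiently large makes the left-hand integral smaller than $\ep$, so the set featured in the statement of Corollary \ref{sC1} is statistically significant. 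That corollary, applied with $V_p = b(\vc{U}_{n_{j_p}})$ and $V = \Ov{b(\vc{U})}$, yields
\[
\frac{1}{N} \sum_{p=1}^N s_{p,N} \, b(\vc{U}_{n_{j_p}}) \to \Ov{b(\vc{U})} \quad \mbox{in } L^q(Q)
\]
for every regular summation method. This verifies the convergence-in-measure hypothesis of Lemma \ref{sL1} for the sub-subsequence $\{n_{j_p}\}$, and the conclusion $\vc{U}_{n_j} \toSC \mathcal{V}$ follows.

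The main obstacle is the diagonal Banach--Saks construction: it must deliver a \emph{single} subsequence whose summable bilinear orthogonality estimates hold for \emph{all} pairs of test functions from the dense family \emph{and} are inherited by arbitrary sub-subsequences. Once such a subsequence exists, the passage from the countable family to a general $b \in C_c(\mathds{R}^m)$ is a routine density argument and the remaining work reduces to the already-established Corollary \ref{sC1} and Lemma \ref{sL1}.
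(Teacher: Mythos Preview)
Your proof is correct and shares the paper's overall architecture: extract a Young-measure-generating subsequence, perform a Banach--Saks refinement that is stable under further subsequences and works simultaneously for a countable dense family in $C_c(\mathds{R}^m)$, then invoke Lemma~\ref{sL1}. The notable difference lies in how the hereditary Banach--Saks property is obtained. The paper appeals to Rosenthal's alternative as a black box to conclude that some subsequence has strongly convergent weighted averages for \emph{every} further subsequence and \emph{every} regular summation method. You instead carry out the classical Hilbert-space Banach--Saks construction explicitly, obtaining the quantitative near-orthogonality estimate $|\langle w_{k,n_i}, w_{l,n_j}\rangle| \leq 2^{-\max(i,j)}$, observe that it is inherited by sub-subsequences via the trivial inequality $j_p \geq p$, and then route through Corollary~\ref{sC1} to handle all regular summation methods at once. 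Your approach is more self-contained (it uses only tools developed in the paper and avoids the external Rosenthal reference) and makes the subsequence-heredity completely transparent; the paper's approach is shorter but leans on a nontrivial result from the literature.
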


\begin{proof}

Repeating the nowadays standard procedure leading to the construction of a Young measure we may extract a suitable subsequence
(not relabeled) such that
\[
b(\vc{U}_n) \to \Ov{ b(\vc{U}) } \ \mbox{weakly-(*) in}\ L^\infty(Q) \ \mbox{as}\ n \to \infty
\]
for any $b \in C_c(\mathds{R}^m)$. We set
\[
\left< \mathcal{V}_{y}, b(\tilde{U}) \right> = \Ov{b(\vc{U})} (y) \ \mbox{for a.a.}\ y \in Q.
\]

In accordance with Lemma \ref{sL1}, we have to show \eqref{s1} for any regular method of summation. As $b$ is bounded, convergence in \eqref{s1} is equivalent to
\begin{equation} \label{e1}
\frac{1}{N} \sum_{n=1}^N s_{n,N} b(\vc{U}_n) \to \Ov{b(\vc{U})} \ \mbox{in}\
L^2(Q) \ \mbox{as}\ N \to \infty
\end{equation}
for any $b \in C_c(\mathds{R}^m)$. The Hilbert space $L^2(Q)$ enjoys the Banach--Saks property. Specifically, any {bounded sequence}
$\{v_n \}_{n=1}^\infty$ {in $L^2(Q)$} contains a subsequence $\{ v_{n_j } \}_{j=1}^\infty$ such that
\[
\frac{1}{N} \sum_{j=1}^N v_{n_j} \to v \ \mbox{in}\ L^2(Q).
\]
Thus, given $b \in C_c(\mathds{R}^m)$, we may apply the alternative shown by Rosenthal \cite{Rosen} to conclude that
there is a subsequence of indexes $\{ n_j \}_{j = 1}^\infty$ such that for any of its subsequences
$\{ n_l \}_{l=1}^\infty \subset \{ n_j \}_{j = 1}^\infty$ there holds
\[
\frac{1}{N} \sum_{l=1}^N s_{{l},N} b(\vc{U}_{n_l} ) \to \Ov{b(\vc{U})} \ \mbox{in}\ L^2(Q)
\]
for \emph{any} regular summation method $\{ s_{n,N} \}$.

Repeating this argument successively for a family of functions $\{ b_m \}_{m=1}^\infty$ dense in $C_0(\mathds{R}^m)$
we obtain \eqref{e1} for a suitable subsequence.

\end{proof}

\section{Applications to the Euler system}
\label{e}

We apply the abstract results obtained in Section \ref{s} to the Euler system.
We start by introducing the concept of generating sequence.

\begin{Definition}[{\bf Generating sequence}] \label{eD1}

Let $\mathcal{V}$ be a viscosity solution of the Euler system with the initial data $[\vr_0, \vm_0]$.

We say that a sequence $[\vr_n, \vm_n]_{n=1}^\infty$ of (weak) solutions to the Navier--Stokes system with
$\mu_n \searrow 0$, $\lambda_n \to 0$ starting from regular approximation $[\vr_{0,n}, \vm_{0,n}]_{n=1}^\infty$
is a \emph{generating sequence} for $\mathcal{V}$ if
\[
[\vr_n, \vm_n] \toSC \mathcal{V}.
\]

\end{Definition}

\subsection{Existence of viscosity solutions}

The following result is a direct consequence of Theorem \ref{PT1} on global
existence for the Navier--Stokes system and the subsequence principle stated in Proposition \ref{sP3}.

\begin{Theorem}[{\bf Existence of viscosity solution}] \label{eT1}

Let $\gamma > \frac{d}{2}$ and let $[\vr_0, \vm_0]$ be given finite energy initial data,
\[
\vr_0 \geq 0,\ \intO{ \left[ \frac{1}{2} \frac{|\vc{m_0}|^2}{\vr_0} + \frac{a}{\gamma - 1} \right] } < \infty.
\]

Then

{\bf(i)}
the Euler system admits a viscosity solution $\{ \mathcal{V}_{t,x} \}_{(t,x) \in (0,T) \times \Omega}$  in the sense of Definition~\ref{PD3};

{\bf (ii)} there exists a regular approximation $[\vr_{0,n}, \vm_{0,n}]$ of $[\vr_0, \vm_0]$, and $\mu_n \searrow 0$,
$\lambda_n \to 0$, such that the associated sequence of weak solutions
$[\vr_n, \vm_n]_{n=1}^\infty$ generates $\mathcal{V}$, specifically
\[
[\vr_n , \vm_n] \toSC \mathcal{V}.
\]

\end{Theorem}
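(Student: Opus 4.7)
My plan is to prove (ii) first; part (i) then follows immediately since any parametrized measure arising as an S--limit of a generating sequence is a viscosity solution by Definition~\ref{PD3}.

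First, I would construct an explicit regular approximation $[\vr_{0,n}, \vm_{0,n}]$ in the sense of Definition~\ref{PD3a}. A standard recipe is: set $\vr_{0,n} = \eta_{1/n} * \vr_0 + 1/n$ (mollification plus a strictly positive shift) and $\vm_{0,n} = \eta_{1/n} * \vm_0$, where $\eta_\delta$ is a smooth mollifier on $\Td$. Checking $W^{k,2}$--regularity, strict positivity of density, and weak convergence is immediate; the convergence of the total energies follows from dominated convergence combined with the observation that $\vm_{0,n}/\sqrt{\vr_{0,n}}$ is controlled by $\vm_0/\sqrt{\vr_0}$ in $L^2$ via Jensen's inequality together with the shift. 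Then I pick arbitrary sequences $\mu_n \searrow 0$ and $\lambda_n \to 0$ (e.g.\ $\mu_n = \lambda_n = 1/n$).

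Next, for every $n$ I invoke Theorem~\ref{PT1} to obtain a finite energy weak solution $[\vr_n, \vm_n = \vr_n \vu_n]$ on $(0,T) \times \Td$ with viscosities $\mu_n, \lambda_n$ and initial data $[\vr_{0,n}, \vm_{0,n}]$. The energy inequality, together with the convergence of the initial energies, gives uniform bounds
\[
\vr_n \in L^\infty(0,T; L^\gamma(\Td)), \qquad \frac{|\vm_n|^2}{\vr_n} \in L^\infty(0,T; L^1(\Td)),
\]
and, by H\"older's inequality,
\[
\vm_n \in L^\infty\bigl(0,T; L^{\frac{2\gamma}{\gamma+1}}(\Td;\mathds{R}^d)\bigr).
\]
Since $\gamma > d/2 \geq 1$, both integrability exponents are strictly larger than $1$ on the bounded domain $(0,T)\times \Td$, hence $\{(\vr_n, \vm_n)\}_{n=1}^\infty$ is equi--integrable in $L^1((0,T)\times \Td; \mathds{R}^{d+1})$.

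Now I apply the subsequence principle, Proposition~\ref{sP3}, with $Q = (0,T)\times \Td$, $D = d+1$, $m = d+1$, and $\vc{U}_n = (\vr_n, \vm_n)$. This yields a subsequence (not relabeled) $[\vr_{n_j}, \vm_{n_j}]$ and a parametrized family of probability measures $\{\mathcal{V}_{t,x}\}_{(t,x) \in (0,T) \times \Td}$ such that
\[
[\vr_{n_j}, \vm_{n_j}] \toSC \mathcal{V}.
\]
Inspecting the proof of Proposition~\ref{sP3}, the very construction of $\mathcal{V}$ proceeds by first passing to a subsequence along which $b(\vr_n, \vm_n) \to \Ov{b(\vr, \vm)}$ weakly--$(*)$ in $L^\infty((0,T) \times \Td)$ for every $b \in C_c(\mathds{R}^{d+1})$, with $\Ov{b(\vr,\vm)}(t,x) = \langle \mathcal{V}_{t,x}; b(\tvr, \tvm)\rangle$. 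This is precisely the defining property of a viscosity solution in Definition~\ref{PD3}. Thus $\{\mathcal{V}_{t,x}\}$ is a viscosity solution of the Euler system with initial data $[\vr_0, \vm_0]$, and the extracted subsequence $[\vr_{n_j}, \vm_{n_j}]$ is a generating sequence for it, which proves both (i) and (ii).

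The only delicate point I anticipate is the verification that the Young measure produced by the standard weak--$(*)$ compactness argument (which is what Definition~\ref{PD3} refers to) is literally the same object as the S--limit supplied by Proposition~\ref{sP3}. This equivalence is implicit in the proof of the subsequence principle, where the Young measure is constructed first and then shown to agree with the S--limit via Lemma~\ref{sL1} and the Banach--Saks argument. Everything else is bookkeeping and standard \emph{a priori} energy estimates.
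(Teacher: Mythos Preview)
Your proposal is correct and follows essentially the same route as the paper: construct a regular approximation of the data, invoke Theorem~\ref{PT1} to obtain Navier--Stokes solutions with vanishing viscosities, use the uniform energy bound to deduce equi--integrability of $[\vr_n,\vm_n]$, and then apply the subsequence principle (Proposition~\ref{sP3}) to extract a completely S--convergent subsequence. The paper dismisses the construction of the regular approximation as ``routine'' and does not spell out the identification of the S--limit with the Young measure required by Definition~\ref{PD3}; your added remarks on both points are accurate and helpful but do not constitute a different strategy.
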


\begin{proof}

It is a routine matter to construct {a} regular approximation $[\vr_{0,n}, \vm_{0,n}]$ of the initial data
$[\vr_0, \vm_0]$ specified in Definition \ref{PD3a}.
Let $\mu_n \searrow 0$, $\lambda_n \to 0$ be a sequence of viscosity coefficients. Let
$[\vr_n, \vm_n = \vr_n \vu_n]_{n=1}^\infty$ be a sequence of finite energy weak solutions to the Navier--Stokes system
with the initial data $[\vr_{0,n}, \vm_{0,n}]$, the existence of which is guaranteed by Theorem \ref{PT1}.

As the energy
\[
\intO{ \left[ \frac{1}{2} \frac{|\vm_n|^2}{\vr_n} + \frac{a}{\gamma - 1} \vr_n^\gamma \right] }
\ \mbox{is uniformly bounded for}\ t \in [0,T],
\]
we deduce
\[
\sup_{t \in [0,T]} \| \vr_n (t, \cdot) \|_{L^\gamma(\Td)} +
\sup_{t \in [0,T]} \| \vm_n (t, \cdot) \|_{L^{\frac{2 \gamma}{\gamma + 1}}(\Td; \mathds{R}^d))} \leq c,
\]
in particular $[\vr_n, \vm_n]_{n=1}^\infty$ {is} equi--integrable in $(0,T) \times \Td$. Consequently,
by virtue of Proposition~\ref{sP3}, there is a subsequence
$[\vr_{n_j}, \vm_{n_j} ]_{j=1}^\infty$ generating the viscosity solution $\mathcal{V}$ in the sense of Definition~\ref{eD1}.

\end{proof}

Of course, the existence result remains valid as long as the Navier--Stokes system admits {a} global--in--time finite energy weak solution
for {the} given initial data. Due to the result of Plotnikov and Weigant \cite{PloWei}, the existence still holds if $d=2$ and $\gamma \geq 1$. Similar extensions to a larger class of pressure--density state equations can be obtained via the existence theory developed in \cite[Chapter 7]{EF70}.

\subsection{Observables}

We show that the observables - invariants of the class of viscosity solutions - can be identified with
weak limits of solutions of the Navier--Stokes system.

\begin{Proposition}[{\bf Convergence of observables}] \label{eP1}

Let $B \in C(\mathds{R}^{d+1})$ satisfy
\[
|B (\vr, \vm)| \aleq (1 + \vr + |\vm|).
\]

Then the following is equivalent:
\begin{itemize}
\item
 $B$ is observable
in the sense of Definition \ref{PD4} for the initial data $[\vr_0, \vm_0]$;
\item
for any sequence of finite energy weak solutions $[\vr_n, \vm_n]_{n=1}^\infty$
of the Navier--Stokes system with viscosity coefficients $\mu_n \searrow 0$, $\lambda_n  \to  0$,
starting from regular approximation
$[\vr_{0,n}, \vm_{0,n}]_{n=1}^\infty$ of the initial data $[\vr_0, \vm_0]$, there holds
\begin{equation} \label{e2}
B(\vr_n, \vm_n) \to \Ov{B(\vr, \vm)} \ \mbox{weakly in}\ {L^r((0,T) \times \Td)}, \ r = \frac{2\gamma}{\gamma + 1}.
\end{equation}
\end{itemize}

\end{Proposition}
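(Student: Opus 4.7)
\smallskip\noindent\textbf{Proof plan.}
The goal is to upgrade the definitional convergence of $b(\vr_n,\vm_n)$, which holds only for $b\in C_c(\mathds{R}^{d+1})$ in the weak-$(*)$ topology of $L^\infty$, to the weak convergence in $L^r$ of $B(\vr_n,\vm_n)$ for continuous $B$ of linear growth. Once this upgrade is achieved, the equivalence with observability is nearly formal.

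\smallskip\noindent\textbf{Step 1: uniform integrability and $L^r$ bounds.} From the energy inequality in Definition~\ref{PD1} together with the normalization of the regular approximation of $[\vr_0,\vm_0]$, I extract
\[
\sup_n \sup_{t\in[0,T]}\|\vr_n(t,\cdot)\|_{L^\gamma(\Td)} + \sup_n \sup_{t\in[0,T]}\|\vm_n(t,\cdot)\|_{L^{2\gamma/(\gamma+1)}(\Td;\mathds{R}^d)} \leq C.
\]
Consequently $(1+\vr_n+|\vm_n|)$ is bounded in $L^r((0,T)\times\Td)$ with the stated exponent $r=\frac{2\gamma}{\gamma+1}>1$, and the hypothesis on $B$ yields that $\{B(\vr_n,\vm_n)\}_{n=1}^\infty$ is bounded in $L^r$, hence equi-integrable.

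\smallskip\noindent\textbf{Step 2: truncation and passage from $C_c$ to linear growth.} I introduce a smooth cut-off $\chi_k\in C_c(\mathds{R}^{d+1})$ with $\chi_k\equiv 1$ on the ball of radius $k$ and $\chi_k\equiv 0$ outside the ball of radius $2k$, and set $B_k=B\chi_k\in C_c(\mathds{R}^{d+1})$. Definition~\ref{PD3} applied to the generating sequence of a viscosity solution $\mathcal{V}$ gives
\[
B_k(\vr_n,\vm_n) \to \left< \mathcal{V}_{t,x};B_k(\tvr,\tvm)\right> \quad\text{weakly-$(*)$ in } L^\infty((0,T)\times\Td),
\]
for every fixed $k$. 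The tail estimate
\[
\|B(\vr_n,\vm_n)-B_k(\vr_n,\vm_n)\|_{L^1} \leq C\!\int_{\{\vr_n+|\vm_n|\geq k\}}\!\!(1+\vr_n+|\vm_n|)\,\dd x\,\dd t \to 0 \ \text{as } k\to\infty,
\]
uniformly in $n$ by equi-integrability from Step~1, together with the analogous bound on $\langle\mathcal{V};B-B_k\rangle$, allows me to pass $k\to\infty$ and conclude that
\[
B(\vr_n,\vm_n) \to \left<\mathcal{V}_{t,x};B(\tvr,\tvm)\right> \quad\text{weakly in } L^1((0,T)\times\Td),
\]
and by the uniform $L^r$ bound with $r>1$, also weakly in $L^r$.

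\smallskip\noindent\textbf{Step 3: deducing the equivalence.} For ($\Rightarrow$), suppose $B$ is observable. Given any admissible sequence $[\vr_n,\vm_n]$ as in \eqref{e2}, pass to a subsequence (Proposition~\ref{sP3} combined with Theorem~\ref{eT1}) generating a viscosity solution $\mathcal{V}$. By Step~2 the weak-$L^r$ limit of $B(\vr_n,\vm_n)$ along this subsequence equals $\langle\mathcal{V};B\rangle$; by observability this value is independent of the viscosity solution, hence independent of the subsequence, whence the full sequence converges weakly in $L^r$ to the common observable value $\overline{B(\vr,\vm)}$. For ($\Leftarrow$), given two viscosity solutions $\mathcal{V}^1,\mathcal{V}^2$ with generating sequences $[\vr_n^i,\vm_n^i]_{n=1}^\infty$, Step~2 identifies the weak-$L^r$ limits with $\langle\mathcal{V}^i;B\rangle$; since \eqref{e2} forces both limits to coincide with the same function $\overline{B(\vr,\vm)}$, we conclude $\langle\mathcal{V}^1;B\rangle=\langle\mathcal{V}^2;B\rangle$ a.e., i.e., $B\in\mathcal{O}[\vr_0,\vm_0]$.

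\smallskip\noindent\textbf{Expected difficulty.} The only non-routine point is the uniform tail control in Step~2: one must verify that the linear-growth bound on $B$, combined with the sharp energy-derived integrability $r=\frac{2\gamma}{\gamma+1}$, is strong enough to kill the tails uniformly in $n$. Since $r>1$ for $\gamma>1$, this follows from a de la Vall\'ee Poussin type argument, but it is where the precise value of $r$ in the statement is essential and should be highlighted.
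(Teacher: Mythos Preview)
Your proposal is correct and follows essentially the same subsequence-based strategy as the paper: extract a subsequence along which a viscosity solution $\mathcal{V}$ is generated, identify the weak $L^r$ limit of $B(\vr_n,\vm_n)$ with $\langle\mathcal{V};B\rangle$, and use observability to conclude independence of the subsequence. The only cosmetic difference is that where the paper invokes the $S$-convergence/Wasserstein machinery (convergence in \eqref{P7}) to handle the passage from $C_c$ test functions to linear-growth $B$, you carry out this extension explicitly via truncation and equi-integrability---both routes rest on the same uniform $L^r$ bound with $r=\tfrac{2\gamma}{\gamma+1}>1$.
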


\begin{proof}

Suppose that $B \in \mathcal{O}[\vr_0, \vm_0]$. As the energy of $[\vr_{ n}, \vm_n]_{n=1}^\infty$ is bounded, there is a subsequence
such that
\begin{equation} \label{e11}
B(\vr_{n_j}, \vm_{n_j} ) \to \mathcal{B} \ \mbox{weakly in}\ L^r((0,T) \times \Td),\ r = \frac{2 \gamma}{\gamma + 1} \leq \gamma.
\end{equation}

By virtue of the subsequence principle stated in Proposition \ref{sP3}, there is yet another subsequence
$\{ n_l \}_{l=1}^\infty \subset \{ n_j \}_{j=1}^\infty$ such that
\[
[\vr_{n_l}, \vm_{n_l} ] {\toSC} \mathcal{V},
\]
where $\mathcal{V}$ is a viscosity solution of the Euler system with the initial data $[\vr_0, \vm_0]$. Since
$B$ is observable, convergence in {\eqref{P7}} yields
\[
\mathcal{B} = \left< \mathcal{V} ; B (\tilde{\vr}, \tilde{\vm} ) \right> \equiv \Ov{B(\vr, \vm)}.
\]
As the same argument can be applied to any subsequence satisfying  \eqref{e11} we get unconditional convergence
claimed \eqref{e2}.

The opposite implication can be shown by the same argument yielding a unique value
\[
\left< \mathcal{V} ; B (\tilde{\vr}, \tilde{\vm} ) \right> = \Ov{B(\vr, \vm)}
\]
for any viscosity solution $\mathcal{V}$, meaning $B \in \mathcal{O}[\vr_0, \vm_0]$.

\end{proof}

Thus the value of observables is uniquely determined as the weak limit of their composition with arbitrary generating sequence. In particular,
any such sequence generates a (unique) Young measure $\mathcal{V}$ if $C_c(R^{d + 1}) \subset \mathcal{O}[\vr_0, \vm_0]$. If this is the case, the measure $\mathcal{V}$ is the unique viscosity solution of the Euler system.

\section{Numerical method -- approximate solutions}
\label{N}

As shown in the preceding section, observables are independent of the particular choice of the vanishing
viscosity coefficients $\mu_n$, $\lambda_n$ and the approximation of the initial data $(\vr_{0,n}, \vm_{0,n})$ and as such reflect \emph{intrinsic} properties
of the limit Euler system.
Our principal objective is to show how to \emph{compute}
the observables by means of a suitable numerical scheme.
We distinguish three basic parameters characterizing the approximation process: the numerical step $h$, typically a mesh size, the artificial viscosity coefficients
$\mu$ and $\lambda$. In view of various restrictions imposed in particular by the consistency estimates, we focus
on parameters $(h, \mu, \lambda)$ ranging in a set $\mathcal{R}$,
\[
[h, \mu, \lambda] \in \mathcal{R} \subset (0,1] \times (0,1] \times [0,1], \ (0,0,0) \in \Ov{\mathcal{R}}.
\]
Typically,
\begin{equation}
\label{R}
\mathcal{R} = \left\{ [h, \mu, \lambda] \ \Big|\ 0 < h^\alpha < c \mu ;  \mbox{ either } \lambda = 0 \mbox{ or } 0 < h^\alpha \leq c \lambda   \right\}
\ \mbox{for some}\ \alpha > 0.
\end{equation}
The region $\mathcal{R}$ is determined by the range of parameters for which a particular numerical method converges,
cf. Theorem \ref{thm_dmvs} below.
In general, we consider the situation where the numerical step $h$ is largely dominated by the (artificial) viscosity, $h << \min(\mu, \lambda)$ if $\mu, \lambda >0$. This
is in line with our philosophy that the problem should be close to the generating sequence of the Navier--Stokes solutions
with low viscosity.

In what follows we introduce a particular numerical method, the so-called vanishing viscosity finite volume method. It is based on the model proposed in \cite{BREN, BREN1} by H.~Brenner as an alternative to the conventional Navier--Stokes  system, see also \cite{FLM18_brenner}.

\subsection{Viscosity Finite Volume (VFV) method}

We start by introducing a discretization of the computational domain and the discrete differential operators.
The physical domain $\Omega \equiv \mathcal{T}^d $, $d=2,3$, is decomposed into compact elements,
\[
 \mathcal{T}^d  = \bigcup_{K \in \grid_h} K.
\]
The elements $K$  are chosen to be  rectangular/cuboid. The mesh $\grid _h$  satisfies standard regularity properties.
The set of all faces $\sigma \in \partial K,$  $K \in \grid_h$ is denoted by $\Sigma.$
We suppose
$
|K| \approx h^d, \ |\sigma| \approx h^{d-1}
\ \mbox{ for any }\ K \in \grid_h, \mbox{ and } \sigma \in \Sigma,
$
where the parameter  $h\in(0,1)$  denotes the size of the mesh $\grid_h.$

We denote ${Q}_h$ the space of  functions  that are constant on each element $K \in \grid_h$, with the associated
projection:
\[
\Pi_h: L^1(\mathcal{T}^d ) \to {Q}_h,\ \Pi_h v = \sum_{K \in \grid_h} 1_K \frac{1}{|K|} \int_K
v \dx.
\]

For the average and jump of $v \in Q_h$ on a face $\sigma \in \Sigma$ we have the following discrete operators, respectively,
\[
\avs{v}(x) = \frac{v^{\rm in}(x) + v^{\rm out}(x) }{2},\ \ \
\jump{ v }  = v^{\rm out}(x) - v^{\rm in}(x).
\]
Here $v^{\rm out}, v^{\rm in}$ are respectively the outward, inward limits with respect to a given normal $\vn$ belonging to $\sigma \in \Sigma.$  We define the following discrete differential operators
for piecewise constant functions $r_h \in Q_h,$ $\vvh \in \vQh \equiv Q_h^d$:
\begin{equation*}
\begin{aligned}
&\Gradh r_h  = \sum_{K \in \grid_h}  (\Gradh r_h)_K 1_K,  \quad
(\Gradh r_h)_K = \sum_{\sigma\in \pd K} \frac{|\sigma|}{|K|} \avs{r_h} \vn,
\\
& \Delta_h r_h = \sum_{K\in \grid_h} (\Delta_h r_h)_K 1_K , \quad
 (\Delta_h r_h)_K = \sum_{\sigma \in \pd K}  \frac{|\sigma|}{|K|} \frac{\jump{r_h}}{h} ,
\\
& \gradd r_h =  \sum_{\sigma\in\pd K} \left(\gradd r_h \right)_{\sigma} 1_\sigma,  \quad \left(\gradd r_h\right) _{\sigma} =\frac{\jump{r_h} }{h} \vc{n}
\\
&\Divh \vvh  = \sum_{K \in \grid_h}  (\Divh  \vvh)_K 1_K, \quad
(\Divh \vvh)_K = \sum_{\sigma\in \pd K} \frac{|\sigma|}{|K|} \avs{\vvh} \cdot \vn
\\
&\Divup (r_h , \vvh) = \sum_{K \in \grid_h}  1_K \Divup(r_h , \vvh)_K, \quad
 \Divup(r_h , \vvh)_K = \sumsfK \frac{|\sigma|}{|K|} F_h(r_h,\vvh).
 \end{aligned}
\end{equation*}
Here $F_h$ denotes the classical upwind flux ${\Up}[r_h, \vvh]$  with an additional numerical diffusion, specifically
\begin{eqnarray*}
\Up [r_h, \vv_h]  &=&
\avs{r_h} \ \avs{\vv_h} \cdot \vc{n} - \frac{1}{2} |\avs{\vv_h} \cdot \vc{n}| \jump{ r_h } \\
F_h(r_h,\vvh)
&=& {\Up}[r_h, \vvh] - \muh \jump{ r_h } = \avs{r_h} \ \avs{\vv_h} \cdot \vc{n}
- \left( \muh + \frac{1}{2} |\avs{\vv_h} \cdot \vc{n}| \right)\jump{ r_h },
\ -1 < \varepsilon .
\end{eqnarray*}
 Moreover, we denote ${\bf F}_h({\bf r}_h,\vvh) = (F_h(r_{1,h},\vvh), \cdots, F_h(r_{d,h},\vvh) )^T$ for ${\bf r}_h = (r_{1,h},\cdots, r_{d,h})^T$.

In order to discretize the time evolution in $[0,T]$ we introduce a time step $\Delta t > 0,$ $\Delta t \approx h,$ and denote
\[
t_k = k \Delta t,\ k=1,2,\dots, N_T.
\]
Furthermore,
\begin{align*}
v^k(x) = v(t^k,x)  \ \mbox{ for all } \  x\in \Omega,\ t^k=k\,\Delta t \ \mbox{ for } k=0,1, \ldots, N_T.
\end{align*}
The time derivative $\frac{\partial {v} }{\partial t}$  is approximated by the backward Euler finite difference
\[
 \frac{\partial {v} }{\partial t}  \Big|_{t^k} \approx D_t {v}^k \equiv \frac{ {v}^k - {v}^{k-1} }{\Delta t}.
\]
Finally,
we introduce a piecewise constant interpolation in time of the discrete values $v^k$,
\begin{align}\label{NM_TD}
v_h(t,\cdot) = v_0 \mbox{ for } t<\Delta t,\ &
v_h(t,\cdot)=v^k \mbox{ for } t\in [k \Delta t,(k+1) \Delta t),\ k=1,2,\ldots,N_T.
\end{align}

\noindent
Let
$(\vrh^0, \vuh^0) = (\Pi_h\vr_0, \Pi_h\vu_0) \in Q_h\times \vQh,$  $\vmh^0 \equiv \vrh^0 \vuh^0 $ be the discrete initial data.

\begin{Definition}[{\bf VFV numerical scheme}] \label{DD1}

A pair $(\vrh, \vmh \equiv \vrh \vuh)$ of piecewise constant functions (in space and time) is a numerical approximation of the Euler system \eqref{P1} by  \emph{viscosity finite volume (VFV) method} if the following system of discrete equations holds:
\begin{eqnarray}
D_t \vrh  + \Divup(\vrh , \vuh ) &=&0,
  \nonumber \\
D_t (\vrh\vuh)  + \Divup(\vrh\vuh,  \vuh ) +\Gradh p_h    &=&  \mu(h) \Delta_h \vuh + \nu(h) \Gradh \Divh \vuh , \label{flm}
\end{eqnarray}
where $\nu(h) =  \frac  1 3 \mu(h) +\lambda(h)$ and the discrete pressure is $p_h= a \vrh^\gamma .$
\end{Definition}

It is convenient to rewrite \eqref{flm}  in the weak form
\begin{subequations}\label{scheme}
\begin{align}
&\intO{ D_t \vrh \varphi_h } - \sum_{ \sigma \in \facesint } \intSh{  F_h(\vrh,\vuh)
\jump{\varphi_h}   } = 0 \quad \mbox{for all } \varphi_h \in {Q}_h,\label{scheme_den}\\
&\intO{ D_t  (\vrh \vuh) \cdot \bfphi_h } - \sum_{ \sigma \in \facesint } \intSh{ {\bf F}_h(\vrh \vuh,\vuh)
\cdot \jump{\bfphi_h}   }- \sum_{ \sigma \in \facesint } \intSh{  \avs{p(\vr_h)} \vc{n} \cdot \jump{ \bfphi_h }  } \nonumber \\
&= - \mu(h) \frac{1}{h} \sum_{ \sigma \in \facesint } \intSh{ \jump{\vuh}  \cdot
\jump{\bfphi_h}  } - \nu(h) \intO{ \Divh \vuh \Divh \bfphi_h },
\quad \mbox{for all }
\bfphi_h \in \vQh . \label{scheme_mom}
\end{align}
\end{subequations}

Note that the VFV method (\ref{flm}) or \eqref{scheme} mimics the  physical process of \emph{vanishing
viscosity limit} in the Navier--Stokes system discussed in Section \ref{e}. In particular, we expect to recover the observables as limits of the numerical solutions as long as the numerical step $h$ is largely dominated by the (vanishing) viscosity coefficients.

\subsection{Structure preserving properties and convergence of VFV method }

We recall some fundamental properties of the VFV method shown in \cite{FeiLukMizShe,FLM18_brenner}.

\begin{itemize}
\item {\bf Conservation of discrete mass}\\
\begin{equation*}
\intO{ \vrh (t, \cdot) } = \intO{ \vr_{0,h} } = M_0 > 0,\,
\ t \geq 0.
\end{equation*}
\item {\bf Positivity  of the discrete density}\\
$$
\vrh(t) >  0  \ \mbox{ for any } t > 0 \mbox{ provided }
\vrh(0) > 0.
$$
\item {\bf Discrete total energy dissipation}\\
\[
D_t \intTd{ \left(\frac{1}{2} \vrh |\vuh|^2  + \frac{ (\vrh)^\gamma}{\gamma-1} \right)  }
+  \mu(h)   \norm{ \gradd  \vuh}_{L^2}^2  + \nu(h) \norm{\Divh \vuh}_{L^2}^2 \leq 0.
\]
\end{itemize}

In \cite{FeiLMMiz,FLM18_brenner}, we have established
the following convergence result for the VFV method, see also \cite{FeLMMiSh} for further details.

\begin{Theorem}[{\bf Weak convergence of VFV method/Euler limit}]\label{thm_dmvs}
Let $\{\vrh,\vmh\}_{h \searrow 0}$ be a family of solutions
generated by the VFV method  \eqref{flm} with $\Delta t \approx h$. Let the initial data satisfy
\[
\vr_0 \in L^\infty (\Omega), \  \vc{m}_0 \in L^\infty(\Omega), \qquad \vr_0>0.
\]

Further, we suppose that $\mu(h)\geq h^\alpha,$ $\nu(h)\geq h^\alpha$, and $\mu(h) \to 0$, $\nu(h) \to 0,$ cf.~\eqref{R},
\begin{equation}\label{VE_alpha}
\begin{split}
& -1 < \eps 
\mbox{ and } \ 0 < \alpha <  2 - \frac{ d/3 + 1 + \eps }{\gamma} \quad \mbox{ for } \gamma\in(1,2),
\\
\mbox{or}\quad  & -1 < \eps \ \mbox{ and } \ 0 < \alpha <  2 - \frac{ d}{\gamma} \quad \mbox{ for } \gamma \geq 2.  \\
\end{split}
\end{equation}

Then at least for a subsequence the numerical solutions $\{\vrh,\vm_h\}_{h \searrow 0}$ of the VFV method \eqref{flm} give rise to a Young measure
$\{ {\Nu}\}_{(t,x)\in(0,T)\times\Omega} $ that represents a dissipative measure valued solution of the Euler system (\ref{P1}) in the sense of Definition~\ref{DMV}.\\[0.1cm]


\end{Theorem}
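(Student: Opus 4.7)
The plan is to follow the now-standard three-step framework for convergence of a consistent, stable finite-volume scheme toward a DMV solution of the Euler system: derive uniform a priori bounds, establish consistency of the scheme as $h\to 0$, and pass to the limit by means of Young measure theory while identifying the Reynolds and energy defects.

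First, I would exploit the structure-preserving properties listed just before the theorem statement. Mass conservation and positivity of the discrete density, together with the discrete energy inequality, yield uniform bounds of $\vrh$ in $L^\infty(0,T;L^\gamma(\Td))$, of $\vmh=\vrh\vuh$ in $L^\infty(0,T;L^{2\gamma/(\gamma+1)}(\Td;\mathds{R}^d))$, and of $\vrh|\vuh|^2$ in $L^\infty(0,T;L^1(\Td))$, along with the dissipative estimates $\sqrt{\mu(h)}\,\gradd\vuh$ and $\sqrt{\nu(h)}\,\Divh\vuh$ bounded in $L^2((0,T)\times\Td)$. Together with the $L^\infty$ bounds on the initial data, this gives equi-integrability of $[\vrh,\vmh]$ on $(0,T)\times\Td$.

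Second, and this is the technical core, I would establish consistency of the scheme: testing the discrete equations \eqref{scheme_den}--\eqref{scheme_mom} with $\Pi_h\varphi$, $\Pi_h\bfvarphi$ for smooth $\varphi,\bfvarphi$, and expanding jumps and averages around face midpoints, one recovers the weak formulation of the Euler system modulo residuals. The key point is to track how the upwind numerical diffusion, the artificial pressure diffusion $\muh$, and the artificial viscosity terms $\mu(h)\Delta_h\vuh$, $\nu(h)\Gradh\Divh\vuh$ balance against the negative powers of $h$ coming from the discrete gradients and against the $L^\gamma$-integrability of the pressure. The scaling condition \eqref{VE_alpha}, which combines $\mu(h),\nu(h)\geq h^\alpha$ with a sharp upper bound on $\alpha$ in terms of $d$, $\gamma$, $\eps$, is precisely what makes every truncation error vanish as $h\to 0$. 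This part can essentially be imported from \cite{FeiLMMiz,FLM18_brenner}, with the added observation that $\mu(h)\Gradh\vuh$ and $\nu(h)\Divh\vuh$ tend to zero in the distributional sense because their $L^2$-norms are only controlled by $\sqrt{\mu(h)},\sqrt{\nu(h)}\to 0$.

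Third, I would invoke the fundamental theorem on Young measures: from equi-integrability, extract a subsequence so that $\{[\vrh,\vmh]\}$ generates a parametrized probability measure $\{\Nu\}_{(t,x)\in(0,T)\times\Td}$ on $\mathds{R}^{d+1}$. Sublinear continuous functions of $[\vrh,\vmh]$ converge weakly to their barycenters $\langle\Nu;b(\tvr,\tvm)\rangle$. The nonlinearities $\vrh\vuh\otimes\vuh$, $p(\vrh)$ and the energy density $\tfrac12|\vmh|^2/\vrh+P(\vrh)$ converge only in the sense of Young measures; the differences between their weak-$*$ limits and the corresponding Young-measure barycenters define the Reynolds defect $\mathfrak{R}$ and the energy defect $\mathfrak{E}$. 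Positivity of $\mathfrak{R}$ and $\mathfrak{E}$ follows from Jensen's inequality applied to the convex maps $(\vr,\vm)\mapsto\vm\otimes\vm/\vr+p(\vr)\mathbb{I}$ and $(\vr,\vm)\mapsto\tfrac12|\vm|^2/\vr+P(\vr)$, while the compatibility bound $\underline{d}\,\mathfrak{E}\leq\mathrm{tr}[\mathfrak{R}]\leq\Ov{d}\,\mathfrak{E}$ is the pointwise algebraic relation $\mathrm{tr}[\vm\otimes\vm/\vr+p(\vr)\mathbb{I}]\approx|\vm|^2/\vr+P(\vr)$, valid for $\gamma>1$ with constants depending only on $\gamma$.

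The main obstacle is the consistency step in the low-viscosity regime: one must simultaneously control the numerical diffusion in the convective flux, the artificial pressure diffusion $\muh$, and the truncation of the discrete elliptic operators $\Delta_h$, $\Gradh\Divh$, against the singular factors produced by the discrete differential operators and against pressure integrability which is only $L^\gamma$ rather than $L^2$. The explicit bounds in \eqref{VE_alpha} are precisely the sharp thresholds on $\alpha$ and $\eps$ that allow these competing contributions to be reconciled; checking them quantitatively is where the real work lies.
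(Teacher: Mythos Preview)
Your three-step outline (energy bounds, consistency, Young-measure limit with defect identification) is exactly the standard route, and it is the one taken in \cite{FeiLMMiz,FLM18_brenner}; note, however, that the present paper does \emph{not} actually prove Theorem~\ref{thm_dmvs} but merely quotes it from those references, so there is no ``paper's own proof'' to compare against. Your sketch matches the argument in the cited works and correctly pinpoints the role of the scaling condition \eqref{VE_alpha} in the consistency step.
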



As suggested by Elling \cite{ELLI1} the  viscosity solution may depend on the relation between the viscosity coefficients $\mu, \, \lambda$. To avoid this ambiguity we consider the specific case $\lambda = 0,$ meaning $\nu = \frac 1 3 \mu.$  Note that this corresponds to the Stokes hypothesis for gases.

The VFV method is of hybrid type. On the one hand, it yields a solution to the Euler system provided the viscosity coefficients
go to zero along with the numerical step as in \eqref{VE_alpha}. On the other hand, it is
a converging numerical approximation of the Navier--Stokes system~\eqref{P1} as long as we set the viscosity coefficients $\mu(h) = \rm{const.},$ $\nu(h) = \rm{const.}$  Indeed, as shown in \cite{FeiLukMizShe},
\begin{equation}
\label{konv}
\begin{split}
\vrh  &\rightarrow \vr \mbox{ (strongly) in } L^{1}(0,T)\times \Td ),\\
\vuh  &\rightarrow \vu \mbox{ (strongly) in } L^1((0,T)\times \Td; \mathds{R}^d ),
\end{split}
\end{equation}
as soon as the Navier--Stokes system admits a strong solution $(\vr, \vu)$ on $(0,T)\times \Td.$

In particular, there exists $\Ov{\vr}$ such that
$$
\left| \left\{ \vr_{h} \geq \Ov{\vr} \right\} \right|  \to 0  \qquad  h \searrow  0,
$$
where $| \cdot |$ denotes the Lebesque measure on $(0,T) \times \Td.$
As $\vr_h$ is constant on each element, this is equivalent to
\begin{equation*}\label{podminka}
\#\left\{K \times [t_i, t_{i+1}); K \in \grid_h, i=1 \dots N_T \Big| \vr_{h} \geq \Ov{\vr} \right\} h^{d+1}  \to 0 \mbox{ for } h \searrow  0.
\end{equation*}
Interestingly, this condition is also sufficient for the convergence claimed in \eqref{konv}.

\begin{Theorem}[{\bf Strong convergence of VFV method/Navier--Stokes limit}]\label{thm_convergence}
Suppose $\mu(h)= {\rm{const.}}$, $\lambda(h) = \nu(h) - \frac 1 3 \mu(h) = 0.$
Let the initial data belong to the class
\begin{equation}
\label{init_data}
\vr_0 \in W^{3,2}(\Td), \ \vr_0 >0,\  \vm_0 = \vr_0 \vu_0, \ \vu_0 \in W^{3,2}(\Td; \mathds{R}^d).
\end{equation}
Let $\{\vr_h, \vu_h \}_{h \searrow 0}$ be a family of solutions of the VFV method \eqref{flm}.

Then the following is equivalent:
\begin{itemize}
\item[\textbf{i)}]
\begin{align} \label{SC}
\vrh & \rightarrow \vr \mbox{ (strongly) in } L^{1}((0,T)\times \Td ), \nonumber \\
\vuh &\rightarrow \vu \mbox{ (strongly) in } L^1((0,T)\times \Td; \mathds{R}^d ) \mbox{ for } h \searrow 0,
\end{align}
where
$[\varrho, \vu]$ is the classical solution to the Navier--Stokes system (\ref{P3}) with the initial data $[\vr_0, \vu_0]$.
\item[\textbf{ii)}]
There exists $h_n \searrow 0$  such {that} the {limits}
\begin{align} \label{SW}
\vr_{h_n} & \rightarrow \vr \mbox{ weakly-(*)} \mbox{ in } L^{\infty}(0,T; L^\gamma(\Td) ),\\
\vu_{h_n} &\rightarrow \vu \mbox{ weakly-(*)} \mbox{ in }  L^\infty(0,T; L^{\frac{2 \gamma}{\gamma + 1}}(\Td); \mathds{R}^d )  \mbox{ for } n\to \infty
\end{align}
{are} $C^1$ {functions}.
\item[\textbf{iii)}]
There exists $\Ov{\vr} > 0$  and a subsequence $h_n \searrow 0 $ such that
\begin{equation} \label{podminka1}
\#\left\{K \times [t_i, t_{i+1}); K \in \grid_h, i=1, \dots, N_T \Big| \vr_{h_n} \geq \Ov{\vr} \right\} h_n^{d+1}  \to 0 \mbox{ for } n \to \infty.
\end{equation}
\end{itemize}
\end{Theorem}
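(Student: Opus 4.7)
The plan is to establish the cyclic chain of implications (i) $\Rightarrow$ (ii) $\Rightarrow$ (iii) $\Rightarrow$ (i); the core difficulty lies in the last step, which I address via a discrete relative energy technique. The implication (i) $\Rightarrow$ (ii) is essentially for free: under \eqref{init_data} the barotropic Navier--Stokes system admits a local-in-time classical ($C^1$) solution by Matsumura--Nishida or Valli--Zajaczkowski, and, choosing $T$ inside its lifespan, the strong $L^1$ limit furnished by (i) is forced to coincide with this classical solution by the uniform energy bounds on the VFV sequence.

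For (ii) $\Rightarrow$ (iii), I would argue in the Young measure framework. The uniform discrete energy dissipation implies that the VFV sequence $[\vrhn, \vmhn]$ generates, along a further subsequence, a Young measure $\mathcal{V}_{t,x}$ which, by the analogue of Theorem~\ref{thm_dmvs} in the fixed-viscosity regime, is a DMV solution of the Navier--Stokes system with barycenter $[\vr, \vm]$ from \eqref{SW}. Since this barycenter is $C^1$, the Compatibility principle recalled in Section~\ref{VSES} collapses the measure to a Dirac mass, $\mathcal{V}_{t,x} = \delta_{(\vr(t,x), \vm(t,x))}$. In particular $\vrhn \to \vr$ in measure; choosing $\Ov{\vr} > \|\vr\|_{L^\infty((0,T)\times \Td)}$, the set $\{\vrhn \ge \Ov{\vr}\}$ has vanishing Lebesgue measure, which is precisely \eqref{podminka1}.

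The main step (iii) $\Rightarrow$ (i) I would tackle by a discrete relative energy estimate. Let $[\vr, \vu]$ denote the classical Navier--Stokes solution with the given smooth initial data and introduce
$$ \mathcal{E}_h(\tau) = \intO{ \left[ \tfrac{1}{2}\vrh |\vuh - \vu|^2 + P(\vrh) - P'(\vr)(\vrh - \vr) - P(\vr) \right](\tau, \cdot) }, $$
with $P(\vr) = \frac{a}{\gamma - 1}\vr^\gamma$. Testing the discrete continuity and momentum equations \eqref{scheme} against $-\tfrac{1}{2}|\vu|^2 + P'(\vr)$ and $\vu$ respectively, combining with the VFV energy inequality, and invoking the consistency estimates from \cite{FeiLukMizShe}, one would arrive at a Gronwall-type inequality
$$ \mathcal{E}_h(\tau) \le C \int_0^\tau \mathcal{E}_h \, \dt + \mathcal{R}_h(\tau). $$

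The main obstacle will be showing $\mathcal{R}_h(\tau) \to 0$: the remainder contains pressure-type contributions of the form $\int \vrh^\gamma |\Grad \vu| \, \dx$ which are a priori unbounded because $\vrh$ lacks an $L^\infty$ estimate. Condition (iii) is the crucial input here: splitting each such integral according to $\{\vrh < \Ov{\vr}\}$ and $\{\vrh \ge \Ov{\vr}\}$, the first piece is controlled uniformly in $h$ by the smoothness of $[\vr, \vu]$ together with the $L^\infty(0,T;L^\gamma)$ energy bound, while the second piece is estimated by H\"older's inequality using \eqref{podminka1} to conclude that the measure of the "bad set" vanishes fast enough to offset the growth of $\vrh^\gamma$. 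The now-standard bookkeeping for the upwind and numerical viscosity remainders then closes the argument; Gronwall and the coercivity of $\mathcal{E}_h$ deliver the strong $L^1$ convergence claimed in (i).
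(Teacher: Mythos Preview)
Your cyclic scheme is sound in spirit, and your arguments for (i) $\Rightarrow$ (ii) and (ii) $\Rightarrow$ (iii) are essentially correct (the latter uses the Navier--Stokes version of the Compatibility principle, which is available in the cited monograph \cite{FeLMMiSh}). The genuine gap lies in your treatment of (iii) $\Rightarrow$ (i), where you misidentify the role of condition \eqref{podminka1}.

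Your relative energy argument presupposes that the classical solution $[\vr,\vu]$ exists on the whole interval $[0,T]$; otherwise the functional $\mathcal{E}_h(\tau)$ cannot even be defined for $\tau$ close to $T$, and the Gronwall constant, which depends on $\|\Grad\vu\|_{L^\infty}$, may blow up at $T_{\max}$. But global existence of the classical solution is precisely what is at stake: a priori we only have a local solution on $[0,T_{\max})$. The discrete relative energy estimate you outline is in fact already the content of \cite[Theorem 5.3]{FeiLukMizShe}, and it yields \eqref{SC} only on compact subintervals of $[0,T_{\max})$. The paper's contribution here is the extension step: one first shows, via the truncation argument $\vr_{h_n}=T_R(\vr_{h_n})+(\vr_{h_n}-T_R(\vr_{h_n}))$ together with \eqref{podminka1} and H\"older's inequality, that the weak limit density satisfies $\vr\leq\Ov{\vr}$ on $[0,T_{\max})$; then the conditional regularity criterion of Sun--Wang--Zhang \cite{SuWaZh} forces $T_{\max}>T$. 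Only after this is the relative energy inequality applicable on all of $[0,T]$.

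A secondary point: the remainder terms you single out, of the form $\int \vrh^\gamma|\Grad\vu|\,\dx$, are not actually the obstruction. In the relative energy framework the pressure contribution $P(\vrh)-P'(\vr)(\vrh-\vr)-P(\vr)$ is part of $\mathcal{E}_h$ itself, so such terms are absorbed by the Gronwall structure rather than appearing as remainders to be split. Your proposed splitting according to $\{\vrh\geq\Ov\vr\}$ would moreover require equi--integrability of $\vrh^\gamma$, which the energy bound alone does not furnish. In short, \eqref{podminka1} is not a tool to tame remainder terms in the discrete estimate; it is the input that, through \cite{SuWaZh}, guarantees the smooth target solution lives long enough for the estimate to be run at all.
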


\begin{proof}
The equivalence of $\textbf{i)}$ and $\textbf{ii)}$ follows from \cite[Theorem~5.8]{FeLMMiSh}.
We proceed by showing the equivalence of $\textbf{i)}$ and $\textbf{iii)}.$ In view of the previous discussion it is enough to show that \eqref{podminka1} implies strong convergence to a classical solution \eqref{SC}.  By virtue of the standard results on the local existence, see, e.g.,  \cite{VAZA},  the Navier--Stokes system admits a classical solution defined on an interval $[0, T_{max}),$  $T_{max} > 0.$ Due to {\cite[Theorem 5.3]{FeiLukMizShe}},  we get  \eqref{SC} on  $[0, \tau]$ for any $\tau < T_{max}.$  Moreover,
$$
\vr_{h_n} \to \vr \mbox{ weakly-(*)} \mbox{ in }{ L_{\mathrm{weak}-(*)}^\infty (0, T; L^\gamma(\Td))}.
$$
We claim that \eqref{podminka1} implies that $0 < \vr \leq \Ov{\vr}.$  It follows from \eqref{podminka1} and H\"older's inequality that
$$
\int \int_{\vr_{h_n} \geq \Ov{\vr}} \vr_{h_n} \dx \mbox{d}t
\leq |\vr_{h_n} \geq \Ov{\vr}|^\frac{1}{\gamma'} \| \vr_{h_n}\|_{L^\gamma( (0,T) \times \Td)} \to 0  \qquad n\to \infty.
$$
Rewriting $\vr_{h_n}$ in the following way
$$
\vr_{h_n} = T_R(\vr_{h_n}) + \left( \vr_{h_n} -  T_R(\vr_{h_n}) \right),
$$
$T_R(r) \equiv \min\{r, R \}, $  $ R \geq \Ov{\vr}$
we have
\begin{eqnarray}
&& \| \vr_{h_n} -  T_R(\vr_{h_n}) \|_{L^1((0,T) \times \Td)} = \int_0^T \int_{\Td} \vr_{h_n} -  T_R(\vr_{h_n}) \dx \dt, \nonumber \\
&& \int \int_{\vr_{h_n} \geq R} \vr_{h_n} \dx \dt \leq  \int \int_{\vr_{h_n} \geq \Ov{\vr}} \vr_{h_n} \dx \dt \to 0.
\end{eqnarray}
Consequently,
$$
T_R(\vr_{h_n}) \to \vr \mbox{ weakly-(*)} \mbox{ in } { L_{\mathrm{weak}-(*)}^\infty (0, T; L^\gamma(\Td))}
$$
for any $R \geq \Ov{\vr},$ which yields that $\vr \leq \Ov{\vr}.$ Due to the conditional regularity criterion of Sun, Wang, Zhang \cite{SuWaZh} $T_{max} > T$ which concludes the proof.
\end{proof}

\medskip

We proceed by applying the previous convergence results to deduce S--convergence of the numerical solutions obtained by the VFV method. In particular, we show that the numerical solutions $\{\vrh, \vm_h\}_{h \searrow 0}$ converge to a viscosity solution of the Euler system \eqref{P3} in the sense of Definition~\ref{eT1}. Similarly to Theorem~\ref{thm_convergence} we assume here and hereafter that $\lambda(h) = 0.$

We denote $[\rN,\ \mN]$ the numerical solution obtained from the VFV method
starting from a regular approximation  $[\vr_{0,n}, \vm_{0,n}]$ of the initial data
$[\vr_0, \vm_0]$, with artificial viscosity $\mu_n $, and the numerical step $h > 0$.
Our principal hypothesis is that possible density concentrations may arise
only on a small set. Specifically, we assume that condition \eqref{podminka1} , i.e.
\begin{equation*}
\#\left\{K \times [t_i, t_{i+1}); K \in \grid_h, i=1, \dots, N_T \Big| \vr_{h_k} \geq \Ov{\vr} \right\} h_k^{d+1}  \to 0 \mbox{ for } k \to \infty.
\end{equation*}
holds for any fixed $\mu_n > 0,$  $[h_k, \mu_n, 0] \in \Ov{\mathcal{R}}.$
Note that this holds if numerical densities $\vr_{h}$ are uniformly bounded, an assumption quite common in the literature.

Our first result addresses the problem of convergence to a given viscosity solution.

\begin{Theorem}[{\bf Convergence of VFV method}] \label{NT1}

Let $\{ \mathcal{V}_{t,x} \}_{(t,x) \in (0,T) \times \Td}$ be a viscosity solution of the Euler system
with the initial data $[\vr_0, \vm_0].$
Assume that $\mathcal{V}$ admits a generating sequence of solutions of the Navier--Stokes system starting
from a regular data approximation $[\vr_{0,n}, \vm_{0,n}]$ and with
viscosity coefficients  $\mu_n \searrow 0$ in the sense of Definition~\ref{eD1}.
Let $[\rN,\ \mN]$ denote the numerical solution obtained from the VFV scheme with the
initial data $[\vr_{0,n}, \vm_{0,n}]$, the artificial viscosities $\mu_n $, and the numerical step $h> 0$,
$[h, \mu_n, 0] \in \Ov{\mathcal{R}}$.
Finally, suppose that the condition \eqref{podminka1} holds for any fixed $\mu_n$.

Then there exists $H_n \searrow 0$ such that
\begin{equation} \label{N4}
[\vr_{h_n, \mu_n} \, \vm_{h_n, \mu_n} ] \toSC \mathcal{V} \ \mbox{ as } n \to \infty \mbox{ whenever } \ 0 < h_n \leq H_n.
\end{equation}

\end{Theorem}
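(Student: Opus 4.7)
I would argue by a diagonal construction, using Theorem~\ref{thm_convergence} fiber-wise in $n$ to replace the Navier--Stokes generator $[\vr_n,\vm_n]$ by a sufficiently accurate VFV approximation, and then transferring S--convergence by a Wasserstein coupling estimate.

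\emph{Step 1: identify $[\vr_n,\vm_n]$ as a classical NS solution on $[0,T]$.} The regular data approximation $[\vr_{0,n},\vm_{0,n}]$ from Definition~\ref{PD3a} lies in $W^{k,2}$, $k\geq 3$, with strictly positive density, so the standard Matsumura--Nishida / Valli--Zajaczkowski local existence result produces a unique classical solution of the Navier--Stokes system with viscosity $\mu_n$ on a maximal interval $[0,T_{\max}(n))$. By weak--strong uniqueness this coincides with the weak generating solution $[\vr_n,\vm_n]$ as long as it exists. Hypothesis \eqref{podminka1} together with the implication \textbf{iii)} $\Rightarrow$ \textbf{i)} of Theorem~\ref{thm_convergence}, applied with the viscosity frozen at $\mu(h)\equiv\mu_n$, $\lambda(h)\equiv 0$ — in particular the conditional regularity criterion of Sun--Wang--Zhang used in its proof — guarantees $T_{\max}(n)>T$ and yields the strong limit
$$
[\vr_{h,\mu_n},\vm_{h,\mu_n}]\to [\vr_n,\vm_n]\quad \text{in } L^1((0,T)\times\Td;\mathds{R}^{d+1})\quad\text{as }h\searrow 0.
$$

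\emph{Step 2: diagonal selection of $H_n$.} For every $n$, Step~1 allows me to choose $H_n\searrow 0$ with $[H_n,\mu_n,0]\in\Ov{\mathcal{R}}$ so that
$$
\|\vr_{h,\mu_n}-\vr_n\|_{L^1((0,T)\times\Td)} + \|\vm_{h,\mu_n}-\vm_n\|_{L^1((0,T)\times\Td;\mathds{R}^d)} \leq \frac{1}{n}
\quad \text{for every }0<h\leq H_n.
$$
Fix any admissible $0<h_n\leq H_n$ and set $\vc{U}_n = [\vr_n,\vm_n]$, $\widetilde{\vc{U}}_n = [\vr_{h_n,\mu_n},\vm_{h_n,\mu_n}]$.

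\emph{Step 3: transfer of S--convergence via Wasserstein.} Given a regular summation method $\{s_{n,N}\}$ and any subsequence, set
$$
\nu_N(y)=\frac{1}{N}\sum_{n=1}^N s_{n,N}\delta_{\vc{U}_n(y)},\qquad
\widetilde{\nu}_N(y)=\frac{1}{N}\sum_{n=1}^N s_{n,N}\delta_{\widetilde{\vc{U}}_n(y)}.
$$
By the triangle inequality on $\mathfrak{P}(\mathds{R}^{d+1})$,
$$
\int_{(0,T)\times\Td} w[\widetilde{\nu}_N;\mathcal{V}_y]\,\dd y
\leq
\int_{(0,T)\times\Td} w[\widetilde{\nu}_N;\nu_N]\,\dd y
+\int_{(0,T)\times\Td} w[\nu_N;\mathcal{V}_y]\,\dd y.
$$
Using the diagonal transport plan pointwise,
$w[\widetilde{\nu}_N(y);\nu_N(y)]\leq \tfrac{1}{N}\sum_{n=1}^N s_{n,N}|\widetilde{\vc{U}}_n(y)-\vc{U}_n(y)|$, so
$$
\int_{(0,T)\times\Td} w[\widetilde{\nu}_N;\nu_N]\,\dd y
\leq \Ov{s}\cdot\frac{1}{N}\sum_{n=1}^N \frac{1}{n}\to 0.
$$
The second integral tends to zero since $\vc{U}_n\toSC\mathcal{V}$ by the generating-sequence hypothesis (Definition~\ref{eD1}). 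The same estimate applies to an arbitrary subsequence (using $n_j\geq j$), which yields $\widetilde{\vc{U}}_n\toSC\mathcal{V}$.

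\emph{Main obstacle.} The delicate point is Step~1: securing that the given generating sequence $[\vr_n,\vm_n]$ really is the classical NS solution on the \emph{entire} interval $[0,T]$, and that the VFV scheme converges strongly to it for the \emph{fixed} viscosity $\mu_n$. This rests on weak--strong uniqueness in the finite-energy class together with continuation of the classical solution past $T$, the latter being precisely what hypothesis \eqref{podminka1} delivers through the conditional regularity argument in Theorem~\ref{thm_convergence}. Once that is in place, the remaining diagonal argument and Wasserstein coupling are routine.
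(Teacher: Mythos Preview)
Your argument is correct and follows essentially the same three-step strategy as the paper: apply Theorem~\ref{thm_convergence} at each fixed $\mu_n$ to obtain strong $L^1$-convergence of the VFV solutions to the generating sequence $[\vr_n,\vm_n]$, select $H_n$ diagonally, and then transfer S--convergence from $[\vr_n,\vm_n]$ to the numerical sequence. The paper carries out the transfer step by invoking the notion of \emph{statistically equivalent} sequences from \cite[Definition~3.1, Theorem~3.2]{Fei2020A}, whereas you give a self-contained Wasserstein coupling estimate via the diagonal transport plan; your version is more explicit (and also your Step~1 spells out the weak--strong uniqueness needed to identify the generating weak solution with the classical one, a point the paper only records in the Remark following the theorem).
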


 \begin{Remark}
Although we  know that $H_n < \mu_n, $  an explicit formula relating $H_n$ to $\mu_n$ is not available.

In addition, in view of Theorem~\ref{thm_convergence},
hypothesis~\eqref{podminka1} is equivalent to the fact that the viscosity solution of the Euler system is generated by a sequence of classical solutions
of the Navier--Stokes system. The situation is unclear for the viscosity solution generated by a sequence of weak solutions.
\end{Remark}

\begin{proof}

In view of hypothesis \eqref{podminka1} we can apply Theorem~\ref{thm_convergence}
for any fixed $ n>0$
to conclude that
\begin{equation} \label{N5}
\left\| \vr_{h, \mu_n} - \vr_n \right\|_{L^1((0,T) \times \Td)} +
\left\| \vm_{h, \mu_n} - \vm_n \right\|_{L^1((0,T) \times \Td; \mathds{R}^d)} \to 0
\ \mbox{as}\ h \to 0
\end{equation}
where $\{ \vr_n, \vm_n \}_{n=1}^\infty$ is the generating sequence.

In view of \eqref{N5}, for any $\delta_n \searrow 0$ there is $H_n > 0$ such that
\begin{equation}
\label{N55}
\left\| \vr_{h, \mu_n} - \vr_n \right\|_{L^1((0,T) \times \Td)} +
\left\| \vm_{h, \mu_n} - \vm_n \right\|_{L^1((0,T) \times \Td; \mathds{R}^d)} \leq \delta_n
\end{equation}
as soon as $0 < h < H_n$. Consequently, if $0 < h_n < H_n$, the sequences
$[ \vr_{h_n, \mu_n} , \vm_{h_n, \mu_n}  ]$ and $[\vr_n, \vm_n]$ are statistically equivalent in the sense of \cite[Section 3, Definition 3.1]{Fei2020A}.
In particular, they generate the same (S)--limit, in other words
\[
[\vr_{h_n, \mu_n} , \vm_{h_n, \mu_n} ] \toS \mathcal{V},
\]
see \cite[Theorem 3.2, Remark 3.3]{Fei2020A}.
Moreover, since \eqref{N55} holds for any $n$, we can strengthen the above convergence
to
\[
[\vr_{h_n, \mu_n} , \vm_{h_n, \mu_n} ] \toSC \mathcal{V}.
\]

\end{proof}

The above result can be reformulated in terms of observables as follows.

\begin{Theorem}[{\bf Approximation of observables}] \label{NT1a}

Let $B \in C({\mathds{R}}^{d+1})$,
\[
|B(\vr, \vm)| \aleq (1 + \vr + |\vm| )
\]
be observable, $B \in \mathcal{O}[\vr_0, \vm_0]$. Let $[\vr_{0,n}, \vm_{0,n}]_{n=1}^\infty$ be a regular approximation
of the initial data $[\vr_0, \vm_0]$ in the sense of Definition \ref{PD3a}.
Denote $[\rN,\ \mN]$  the numerical solution obtained from the VFV method with the
initial data $[\vr_{0,n}, \vm_{0,n}]$, and
\[
[h, \mu_n,0] \in \Ov{\mathcal{R}}, \mu_n \searrow 0.
\]
Finally, suppose that the condition \eqref{podminka1} holds for each fixed $\mu_n$.

Then there exists $H_n \searrow 0$ such that
\begin{equation} \label{N4a}
\begin{split}
B(\vr_{h_n}, \vm_{h_n}) &\to \Ov{B(\vr, \vm)} \ \mbox{weakly in}\ L^1((0,T) \times \Omega)\\ \mbox{whenever} \ 0 &< h_n \leq H_n.
\end{split}
\end{equation}
In particular, if
\begin{equation} \label{N4b}
\frac{1}{N} \sum_{n=1}^N s_{n,N} B(\rNN, \mNN) \to \mathcal{B} \ \mbox{in}\ L^1((0,T) \times \Td),\ 0 < h_n < H_n
\end{equation}
for some regular summation method, then $\mathcal{B} = \Ov{B(\vr, \vm)}$.
\end{Theorem}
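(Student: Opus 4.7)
The strategy is to combine three ingredients: Proposition~\ref{eP1}, which identifies observables with weak Navier--Stokes limits; Theorem~\ref{thm_convergence}, which under hypothesis~\eqref{podminka1} upgrades the weak limit of VFV at fixed viscosity to the classical Navier--Stokes solution and yields strong $L^1$ convergence; and Theorem~\ref{NT1}, which provides S--convergence of the diagonal VFV sequence to a viscosity solution. The weak--$L^1$ statement~\eqref{N4a} will follow by inserting the Navier--Stokes sequence as an intermediary between the VFV sequence and $\overline{B(\vr,\vm)}$, while \eqref{N4b} will follow directly from the S--limit identified in Theorem~\ref{NT1}.

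First, by Theorem~\ref{PT1} let $[\vr_n,\vm_n]$ be a finite--energy weak Navier--Stokes solution with viscosity $\mu_n$ emanating from $[\vr_{0,n},\vm_{0,n}]$. The regularity of $[\vr_{0,n},\vm_{0,n}]$ stipulated in Definition~\ref{PD3a} combined with~\eqref{podminka1} and the equivalence \textbf{(i)}$\Leftrightarrow$\textbf{(iii)} of Theorem~\ref{thm_convergence} implies that the corresponding classical Navier--Stokes solution exists on $[0,T]$ and, for each fixed $n$,
\[
 \|\vr_{h,\mu_n}-\vr_n\|_{L^1((0,T)\times\Td)} + \|\vm_{h,\mu_n}-\vm_n\|_{L^1((0,T)\times\Td;\mathds{R}^d)}\to 0 \quad\text{as}\ h\to 0.
\]
Following the diagonal extraction of Theorem~\ref{NT1}, I pick $H_n\searrow 0$ and a null sequence $\delta_n\searrow 0$ such that the left--hand side is bounded by $\delta_n$ whenever $0<h_n\leq H_n$. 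In parallel, since $B\in\mathcal{O}[\vr_0,\vm_0]$, Proposition~\ref{eP1} applied to the whole Navier--Stokes sequence yields
\[
 B(\vr_n,\vm_n)\to \overline{B(\vr,\vm)}\quad\text{weakly in}\ L^r((0,T)\times\Td),\ r=\tfrac{2\gamma}{\gamma+1}.
\]

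The key technical step is to promote the $L^1$ closeness of the two sequences into $L^1$ closeness of their $B$--compositions. The energy inequality for NS and the discrete energy dissipation for VFV together give uniform $L^\infty(0,T;L^\gamma)$ bounds on the densities and uniform $L^\infty(0,T;L^{2\gamma/(\gamma+1)})$ bounds on the momenta; the growth condition $|B|\lesssim 1+\vr+|\vm|$ then makes both $\{B(\vr_{h_n,\mu_n},\vm_{h_n,\mu_n})\}$ and $\{B(\vr_n,\vm_n)\}$ bounded in $L^r$ with $r>1$, hence equi--integrable on $(0,T)\times\Td$. Arguing by contradiction, if $\|B(\vr_{h_n,\mu_n},\vm_{h_n,\mu_n})-B(\vr_n,\vm_n)\|_{L^1}\not\to 0$ along some subsequence, extract a further subsequence with $[\vr_{h_n,\mu_n}-\vr_n,\vm_{h_n,\mu_n}-\vm_n]\to 0$ almost everywhere; continuity of $B$ gives pointwise vanishing of the $B$--difference, and Vitali's convergence theorem upgrades this to $L^1$ convergence, a contradiction. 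This subsequence--Vitali argument is the main obstacle: linear (rather than bounded) growth of $B$ is admissible only through the uniform $L^r$, $r>1$, control inherited from the energy estimates.

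Combining the previous steps, $B(\vr_{h_n,\mu_n},\vm_{h_n,\mu_n})=B(\vr_n,\vm_n)+o(1)_{L^1}$, so passing to the weak $L^1$ limit yields~\eqref{N4a}. For~\eqref{N4b}, Theorem~\ref{NT1} gives $[\vr_{h_n,\mu_n},\vm_{h_n,\mu_n}]\toSC\mathcal{V}$; by Definition~\ref{PD2}(ii), supplemented by a truncation $B\approx B\cdot\chi_{\{|(\tilde\vr,\tilde\vm)|\leq k\}}$ together with the uniform equi--integrability established above in order to bridge from $C_c$ to functions of linear growth, one obtains
\[
 \frac{1}{N}\sum_{n=1}^N s_{n,N}B(\rNN,\mNN) \to \left\langle\mathcal{V};B(\tilde{\vr},\tilde{\vm})\right\rangle = \overline{B(\vr,\vm)} \quad\text{in}\ L^1((0,T)\times\Td)
\]
for every regular summation method, the last equality using observability of $B$. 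Uniqueness of the $L^1$ limit then forces $\mathcal{B}=\overline{B(\vr,\vm)}$.
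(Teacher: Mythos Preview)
Your argument for \eqref{N4a} is correct and follows the same line as the paper: fix $n$, use Theorem~\ref{thm_convergence} under \eqref{podminka1} to get strong convergence of the VFV solutions to the classical Navier--Stokes solution $[\vr_n,\vm_n]$, choose $H_n$ diagonally, and then invoke Proposition~\ref{eP1}. Your Vitali/equi--integrability argument to pass from $L^1$--closeness of $[\vr_{h_n,\mu_n},\vm_{h_n,\mu_n}]$ and $[\vr_n,\vm_n]$ to $L^1$--closeness of their $B$--compositions is a legitimate elaboration; the paper simply asserts the stronger $L^q$ closeness \eqref{N4c} for all $q<\infty$ and concludes.

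Your treatment of \eqref{N4b}, however, has a genuine gap. You invoke Theorem~\ref{NT1} to obtain $[\vr_{h_n,\mu_n},\vm_{h_n,\mu_n}]\toSC\mathcal{V}$, but Theorem~\ref{NT1} \emph{assumes} that the given data $[\vr_{0,n},\vm_{0,n}]$ and viscosities $\mu_n$ arise from a generating sequence for some viscosity solution $\mathcal{V}$ in the sense of Definition~\ref{eD1}. That hypothesis is absent in Theorem~\ref{NT1a}: here the regular approximation and the $\mu_n$ are arbitrary, and neither the Navier--Stokes sequence $[\vr_n,\vm_n]$ nor the numerical sequence is assumed to S--converge (cf.\ Remark~\ref{NR1}, which explicitly notes that the numerical sequence need not even generate a Young measure). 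Passing to an S--convergent subsequence via Proposition~\ref{sP3} does not help, because \eqref{N4b} concerns the full sequence and a fixed summation method.

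The paper's route for \eqref{N4b} avoids this entirely and is much shorter: once \eqref{N4a} is established, the sequence $B(\vr_{h_n,\mu_n},\vm_{h_n,\mu_n})$ converges weakly in $L^1$ to $\overline{B(\vr,\vm)}$; any regular summation method \eqref{P6} preserves limits (test against an arbitrary $\phi\in L^\infty$ and use that $s_{n,N}/N\geq 0$, $\sum_n s_{n,N}/N=1$, $\sup_n s_{n,N}/N\to 0$), so the weighted averages converge weakly in $L^1$ to $\overline{B(\vr,\vm)}$ as well. If in addition they converge strongly in $L^1$ to some $\mathcal{B}$ as hypothesised in \eqref{N4b}, uniqueness of weak limits forces $\mathcal{B}=\overline{B(\vr,\vm)}$.
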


\begin{Remark} \label{NR1}

Note that \eqref{N4b} holds if the sequence of numerical solutions $[\rNN, \mNN ]_{n=1}^\infty$ is S--convergent
with respect to a regular summation method $\{ s_{n,N} \}_{n=1}^\infty$. We point out that $[\rNN, \mNN ]_{n=1}^\infty$ need not to generate a Young measure.

\end{Remark}

\begin{proof}

The proof basically copies the steps of the proof of Theorem \ref{NT1}. Thus, keeping
$n > 0$ fixed, we let $h\searrow 0$ in the sequence of numerical solutions
\[
[\vr_{h, \mu_n}, \vm_{h, \mu_n}].
\]
We obtain a sequence of classical solutions $[\vr_n, \vm_n]$ of the Navier--Stokes system with the viscosity coefficients $\mu_n$, $\lambda_n=0$ such that for any $\delta_n > 0$
\begin{equation} \label{N4c}
\left\| \vr_{h, \mu_n} - \vr_n \right\|_{L^q((0,T) \times \Td)} +
\left\| \vm_{h, \mu_n} - \vm_n \right\|_{L^q((0,T) \times \Td; \mathds{R}^d)} \leq \delta_n,\ 1 \leq q < \infty,
\end{equation}
whenever $0 < h < H_n$.

In view of Proposition \ref{eP1},
\[
B(\vr_n, \vm_n ) \to \Ov{B(\vr, \vm)} \ \mbox{weakly in}\ L^1((0,T) \times \Td);
\]
whence \eqref{N4a} follows from \eqref{N4c}.

Convergence~\eqref{N4b}  follows directly from \eqref{N4a}.

\end{proof}

Note the subtle difference between Theorem \ref{NT1} and Theorem \ref{NT1a}. In Theorem \ref{NT1}, the approximate sequence of numerical solutions starts with the same initial data as the generating S--convergent sequence of solutions of the Navier--Stokes system. The S--convergence is then inherited by the numerical solutions if the step $h_n$ is small enough. In Theorem \ref{NT1a}, the viscosity coefficients as well as the initial data are arbitrary. The limit value $\Ov{B(\vr, \vm)}$ of an observable $B$ can be recovered
as a \emph{strong} limit of weighted averages as long as the latter exists. In particular, this is the case as soon as the family of
numerical solutions is S--convergent. In both cases, the VFV scheme effectively computes the viscosity solution of the Euler system.

\subsection{Numerical experiments}

 In order to illustrate our theoretical results we have to consider a problem that is known to produce oscillatory solutions to the Euler system.
A prominent example is the celebrated Kelvin--Helmholtz problem~\cite{Helmhotz, Kelvin}, however typically studied for the complete Euler system. As observed, for example in \cite{paper1}, similar effects can be produced also for the barotropic Euler system driven by a potential volume force producing strong stratification.  Motivated by the prevailing amount of existing literature, we focus directly on the full Euler system,
cf.~\cite{FeiLukMizSheWa, FLM18_brenner} for the VFV method.

We choose the following initial data
\[
(\vr, u_1, u_2,p)(x)=\left\{
\begin{array}{ll}
(2, -0.5, 0, 2.5), & \text{if}\ I_1<x_2<I_2\\
(1, 0.5, 0, 2.5), & \text{otherwise.}
\end{array}
\right.
\]
Here the interface profiles
\[
I_j = I_j(x):=J_j+\epsilon Y_j(x), \quad j=1,2,
\]
are chosen to be small perturbations around the lower $J_1=0.25$ and the upper $J_2=0.75$ interface, respectively. Further,
\[
Y_j(x)=\sum_{n=1}^{m}a_j^n\cos(b_j^n+2n\pi x_1),\quad j=1,2,
\]
where $a_j^n\in[0,1]$ and $b_j^n \in[-\pi,\pi]$, $i=1,2$, $n=1,\ldots,m$ are arbitrary but fixed numbers. The coefficients $a_j^n$ have been normalized such that $\sum_{n=1}^{m}a_j^n=1$ to guarantee that $|I_j(x)-J_j|\leq \epsilon$ for $j=1,2$. We have set $m=10$ and $\epsilon=0.01.$

In what follows we present the numerical simulations obtained by the VFV method with $\alpha = 0.9,$ $\varepsilon = {0.5}$ at the final
time $T=2.$ It is the time when small-scales vortex sheets have been already formed at the interfaces.
Table~\ref{tab1} documents
a representative part of our extensive numerically simulations and presents experimental convergence study for different
regular summation methods corresponding to the following choices of the weight function $\omega$:
\begin{equation*}
\omega_{\text{equal}}(t) = 1, \;
\omega_{\text{quad}}(t) = t(1-t), \;
\omega_{\text{sin2}}(t) = \sin^2(\pi t), \;  \omega_{\text{exp}}(t) = \exp\left(\frac{-1}{t(1-t)}\right) \mathbf{1}_{(0,1)} \
\mbox{ for } t\in [0,1].
\end{equation*}
Recall that for every non-negative $\omega \in C[0,1]$ the infinite matrix $\{ s_{n,N} \}_{n=1, N=1}^\infty$
\begin{equation*}
	s_{n,N} =
	\begin{cases}
		1, & \text{for }1\leq n \leq N,
		\text{ if }\sum\limits_{m=1}^N \omega(\frac{m}{N}) = 0,\\[3mm]
		{N\omega(\frac{n}{N})}{\Big/}\left({\sum\limits_{m=1}^N \omega(\frac{m}{N})}\right), &\text{for }1\leq n \leq N,
		\text{ if }\sum\limits_{m=1}^N \omega(\frac{m}{N}) > 0, \\[3mm]
		0, &\text{otherwise,}
	\end{cases}
\end{equation*}
is a regular summation method.

We choose uniform meshes having $k \times k $  mesh cells and the mesh parameter $h = 1/k$. Here $k$  is taken from the set
$ \{32\ell \,{|}\,\ell \in \mathds{N},\, 1\leq \ell \leq 64\}.$

Table~\ref{tab1}  presents the experimental order of convergence of weighted averages of the density computed on $k \times k$ meshes
for all considered summation methods using $\omega_\text{equal}, \ \omega_\text{quad}, \ \omega_\text{sin2}, \ \omega_\text{exp}$.
The error is computed in the $L^1$-norm and the reference solution is chosen as the average over all computed solutions with the weight function $\omega$ of the respective column.
We present only the errors for averages with $k$ up to $1024$, since otherwise
the set of simulations used to compute the averages is already very close to the reference solution.


When considering different subsequences and reference solutions, the errors of all summation methods are typically within the same order of magnitude, though the convergence rate may differ.
Analogous convergence results using
the Ces\`aro average, i.e.~$\omega=\omega_{\text{equal}}$, as the reference solution for all summation methods are presented in Tables~\ref{tab1a}, \ref{tab2a}, cf.~Appendix.


\begin{table}
\caption{Convergence study in the $L^1$-norm for averages of the density at time $T = 2$ using different weight functions.}
\label{tab1}
\begin{center}
	\begin{tabular}{|cV{3}c|cV{3}c|cV{3}c|cV{3}c|c|}
		\hline
		$k$&
		\multicolumn{2}{cV{3}}{$\omega_{\text{equal}}$}& \multicolumn{2}{cV{3}}{$\omega_{\text{quad}}$}& \multicolumn{2}{cV{3}}{$\omega_{\text{sin2}}$}& \multicolumn{2}{c|}{$\omega_{\text{exp}}$}\\
		\cline{2-9}
		(up to)& error   & order& error   & order& error   & order& error   & order\\
		 \hline64 & 1.46e-01 & - & 2.03e-01 & - & 2.06e-01 & - & 2.08e-01 & - \\
		 \hline
		 96 & 1.18e-01 & 0.53 & 1.54e-01 & 0.67 & 1.58e-01 & 0.66 & 1.60e-01 & 0.65 \\
		 \hline
		 128 & 9.86e-02 & 0.61 & 1.24e-01 & 0.76 & 1.25e-01 & 0.82 & 1.22e-01 & 0.94 \\
		 \hline
		 160 & 8.48e-02 & 0.67 & 1.03e-01 & 0.82 & 1.02e-01 & 0.90 & 9.85e-02 & 0.96 \\
		 \hline
		 192 & 7.44e-02 & 0.72 & 8.80e-02 & 0.88 & 8.61e-02 & 0.93 & 8.37e-02 & 0.89 \\
		 \hline
		 224 & 6.62e-02 & 0.76 & 7.64e-02 & 0.92 & 7.47e-02 & 0.92 & 7.29e-02 & 0.90 \\
		 \hline
		 256 & 5.96e-02 & 0.79 & 6.75e-02 & 0.93 & 6.59e-02 & 0.93 & 6.45e-02 & 0.92 \\
		 \hline
		 288 & 5.42e-02 & 0.80 & 6.04e-02 & 0.94 & 5.90e-02 & 0.95 & 5.77e-02 & 0.94 \\
		 \hline
		 320 & 4.97e-02 & 0.81 & 5.47e-02 & 0.95 & 5.32e-02 & 0.97 & 5.21e-02 & 0.96 \\
		 \hline
		 352 & 4.60e-02 & 0.82 & 4.99e-02 & 0.97 & 4.85e-02 & 0.98 & 4.75e-02 & 0.98 \\
		 \hline
		 384 & 4.28e-02 & 0.82 & 4.58e-02 & 0.98 & 4.45e-02 & 0.99 & 4.36e-02 & 0.99 \\
		 \hline
		 416 & 4.01e-02 & 0.82 & 4.23e-02 & 0.98 & 4.11e-02 & 1.00 & 4.02e-02 & 1.00 \\
		 \hline
		 448 & 3.78e-02 & 0.82 & 3.94e-02 & 0.99 & 3.81e-02 & 1.01 & 3.73e-02 & 1.00 \\
		 \hline
		 480 & 3.57e-02 & 0.82 & 3.68e-02 & 0.99 & 3.55e-02 & 1.01 & 3.48e-02 & 1.01 \\
		 \hline
		 512 & 3.38e-02 & 0.82 & 3.45e-02 & 0.99 & 3.33e-02 & 1.02 & 3.26e-02 & 1.01 \\
		 \hline
		 544 & 3.22e-02 & 0.82 & 3.25e-02 & 0.99 & 3.13e-02 & 1.02 & 3.07e-02 & 1.00 \\
		 \hline
		 576 & 3.07e-02 & 0.82 & 3.07e-02 & 0.99 & 2.95e-02 & 1.02 & 2.90e-02 & 1.00 \\
		 \hline
		 608 & 2.94e-02 & 0.82 & 2.91e-02 & 0.99 & 2.79e-02 & 1.02 & 2.75e-02 & 1.00 \\
		 \hline
		 640 & 2.82e-02 & 0.82 & 2.77e-02 & 0.99 & 2.65e-02 & 1.01 & 2.61e-02 & 0.99 \\
		 \hline
		 672 & 2.71e-02 & 0.82 & 2.64e-02 & 0.99 & 2.52e-02 & 1.01 & 2.49e-02 & 0.99 \\
		 \hline
		 704 & 2.61e-02 & 0.82 & 2.52e-02 & 0.99 & 2.41e-02 & 1.01 & 2.38e-02 & 0.99 \\
		 \hline
		 736 & 2.51e-02 & 0.83 & 2.41e-02 & 0.99 & 2.30e-02 & 1.00 & 2.27e-02 & 0.99 \\
		 \hline
		 768 & 2.42e-02 & 0.84 & 2.31e-02 & 0.99 & 2.21e-02 & 1.00 & 2.18e-02 & 0.99 \\
		 \hline
		 800 & 2.34e-02 & 0.87 & 2.22e-02 & 0.99 & 2.12e-02 & 1.00 & 2.09e-02 & 0.99 \\
		 \hline
		 832 & 2.26e-02 & 0.88 & 2.13e-02 & 1.00 & 2.04e-02 & 1.00 & 2.01e-02 & 0.99 \\
		 \hline
		 864 & 2.19e-02 & 0.90 & 2.05e-02 & 1.01 & 1.96e-02 & 1.01 & 1.94e-02 & 1.00 \\
		 \hline
		 896 & 2.11e-02 & 0.92 & 1.98e-02 & 1.03 & 1.89e-02 & 1.02 & 1.87e-02 & 1.01 \\
		 \hline
		 928 & 2.04e-02 & 0.94 & 1.91e-02 & 1.05 & 1.82e-02 & 1.03 & 1.80e-02 & 1.01 \\
		 \hline
		 960 & 1.98e-02 & 0.98 & 1.84e-02 & 1.07 & 1.76e-02 & 1.05 & 1.74e-02 & 1.03 \\
		 \hline
		 992 & 1.91e-02 & 1.02 & 1.77e-02 & 1.09 & 1.70e-02 & 1.07 & 1.68e-02 & 1.05 \\
		 \hline
		 1024 & 1.85e-02 & 1.05 & 1.71e-02 & 1.13 & 1.64e-02 & 1.10 & 1.63e-02 & 1.07 \\
		 \hline
	\end{tabular}
\end{center}
\end{table}

In Figure~\ref{fig:Error_Plot_different_subsequences} we compare the  Ces\` aro averages of different subsequences of numerical solutions.
Specifically, we calculate the following errors
	\begin{align*}
		\text{sequence 1:}\quad &\left\| \sum_{\ell =1}^K\left(\frac{1}{3\ell}\sum_{m=1}^{3\ell} \varrho_{32m} -\frac{1}{\ell}\sum_{m=1}^{\ell} \varrho_{32(3m-2)}\right)\right\|_{L^1((0,T) \times \Td)}
		\\
		\text{sequence 2:}\quad &\left\| \sum_{\ell=1}^K\left(\frac{1}{3\ell}\sum_{m=1}^{3\ell} \varrho_{32m} -\frac{1}{\ell}\sum_{m=1}^{\ell} \varrho_{32(3m-1)}\right)\right\|_{L^1((0,T) \times \Td)}
		\\
		\text{sequence 3:}\quad &\left\| \sum_{\ell=1}^K\left(\frac{1}{3\ell}\sum_{m=1}^{3\ell} \varrho_{32m} -\frac{1}{\ell}\sum_{m=1}^{\ell} \varrho_{32(3m)}\right)\right\|_{L^1((0,T) \times \Td)}
	\end{align*}
	for $\ell=1,\,2,\,3,\,\dots,\, 21$.
Figure~\ref{fig:Error_Plot_different_subsequences} illustrates that the observed {convergence} does not depend on a chosen subsequence of numerical solutions. Together with Table~\ref{tab1} it indicates that the sequence of numerical solutions obtained by the VFV method {is S-convergent for all regular summation methods and the limit does not depend on the specific sequence of numerical solutions}.

\begin{figure}
	\centering
	\includegraphics[width=0.5\linewidth]{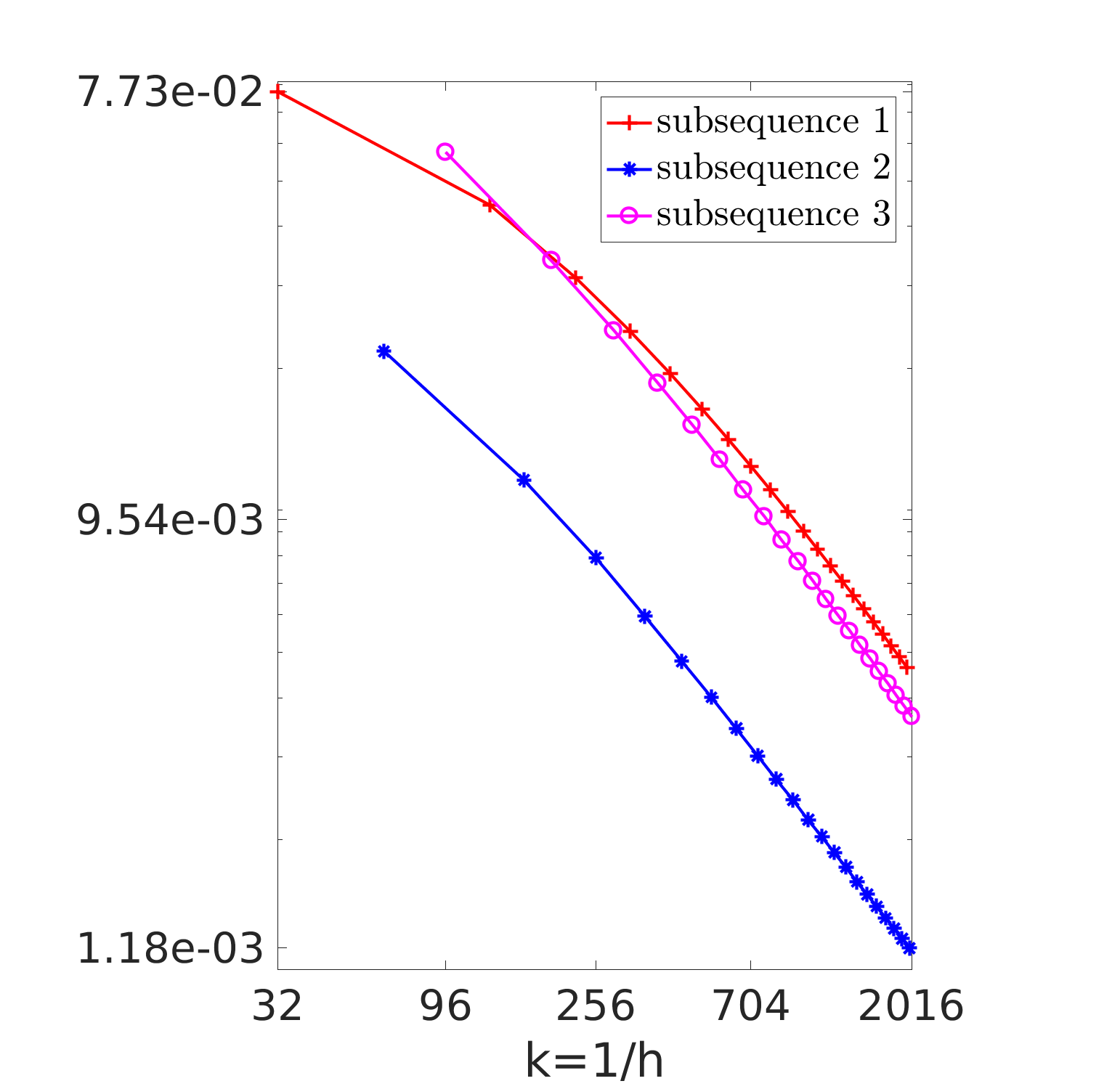}
	\caption{Convergence of different subsequences of Ces\`aro averages.}
	\label{fig:Error_Plot_different_subsequences}
\end{figure}

Figures~\ref{non_av_images}, 3 present numerical densities  computed by the VFV method  on  $k \times k$ meshes. Figure~\ref{av_images} shows the Ces\` aro averages of the density for various meshes.
Analogous pictures, not presented here, have been obtained also for other summation methods with the weight functions
$\omega_\text{quad}, \ \omega_\text{sin2}, \ \omega_\text{exp}$.
The first variance, that is another observable function, is presented for different meshes in Figure~\ref{var_images}.
More precisely, for the summation method with $\omega_\text{equal}$ the first variance is computed as
$\frac{1}{k} \sum_{n=1}^k \left | {\varrho_n} -  \frac{1}{k} \sum_{{m}=1}^k {\varrho_{m}} \right |. $
 In contrast to Figure~\ref{non_av_images} where no convergence of
single numerical solutions is observed,
Figures~\ref{av_images}, \ref{var_images} indicate the convergence for observables average and variance.

\begin{figure}
	\begin{subfigure}{0.24\linewidth}
		\centering
		\includegraphics[width=\linewidth]{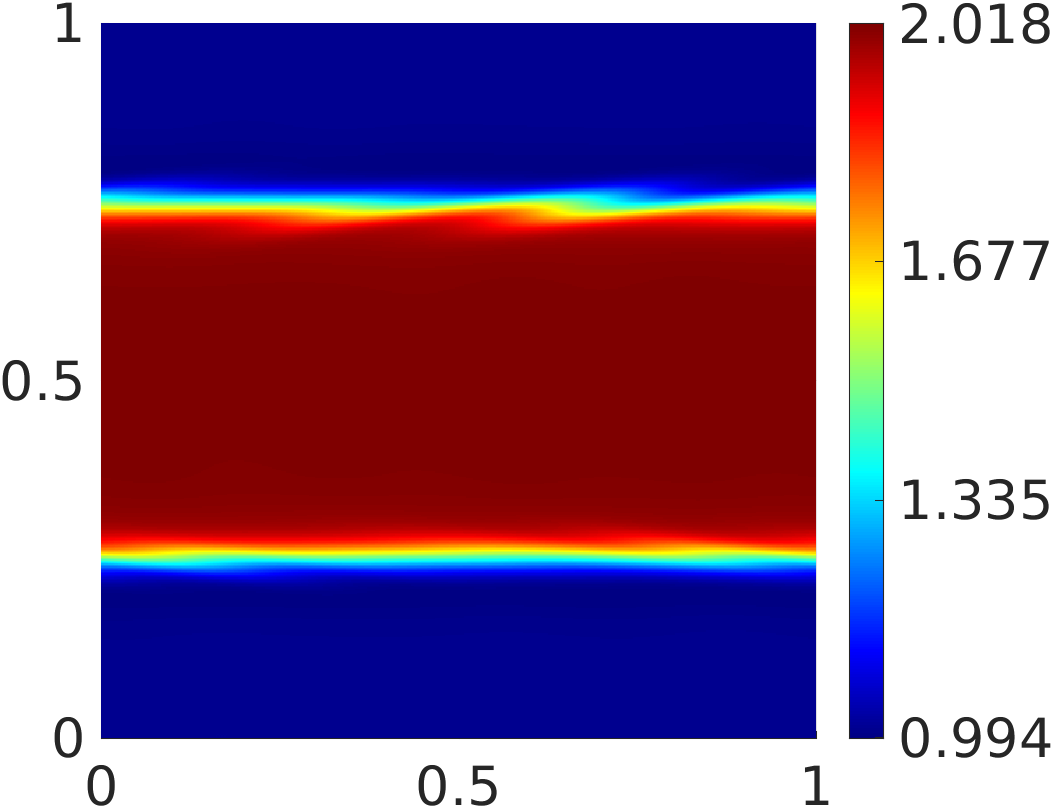}
		\caption{$k = 512$}
		\label{fig:SimulationN512}
	\end{subfigure}
	\hfill
	\begin{subfigure}{0.24\linewidth}
		\centering
		\includegraphics[width=\linewidth]{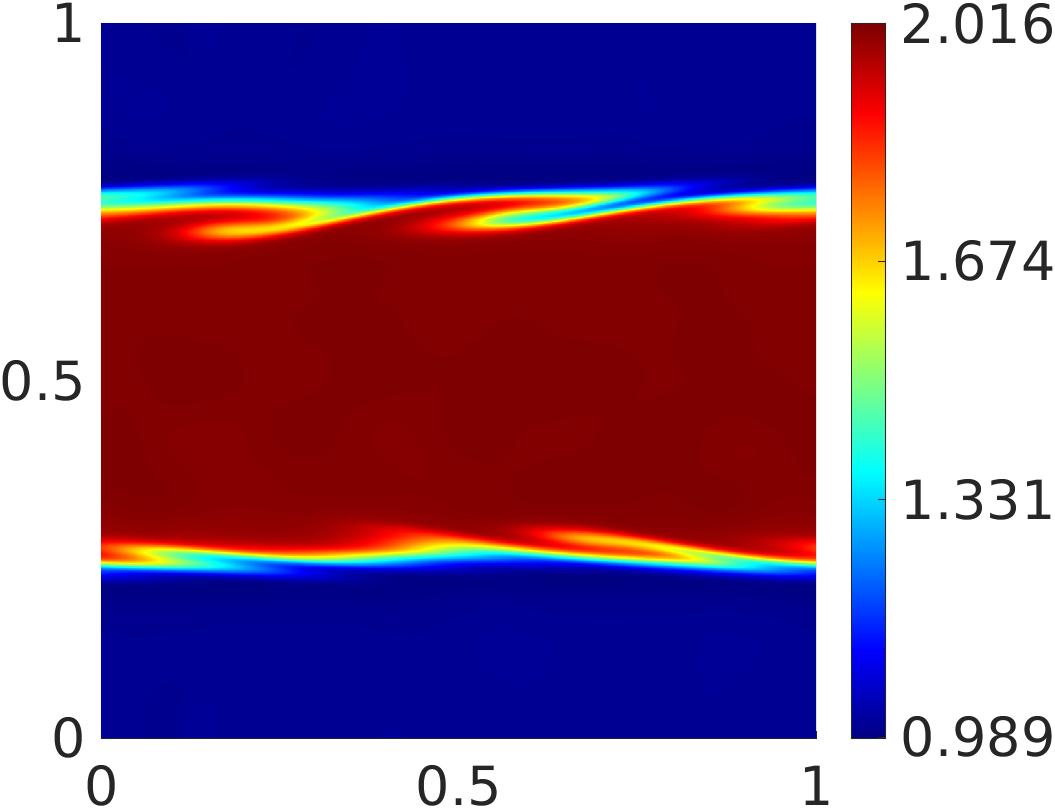}
		\caption{$k = 1024$}
		\label{fig:SimulationN1024}
	\end{subfigure}
	\hfill
	\begin{subfigure}{0.24\linewidth}
		\centering
		\includegraphics[width=\linewidth]{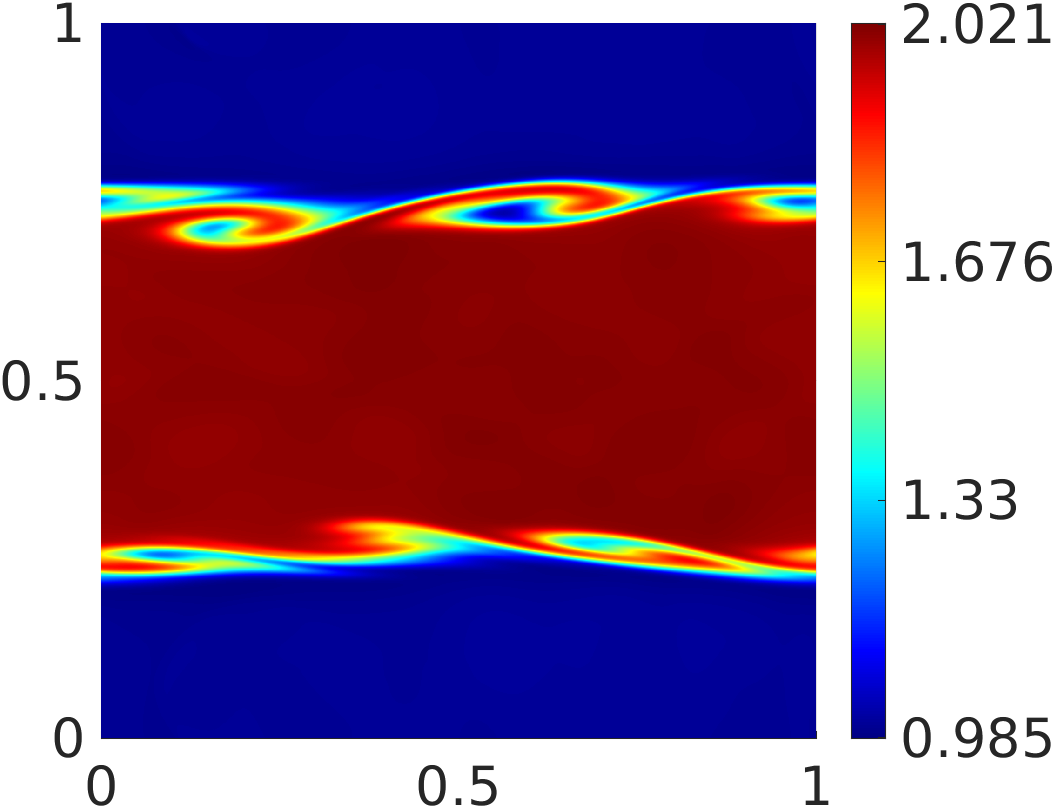}
		\caption{$k = 1536$}
		\label{fig:SimulationN1536}
	\end{subfigure}
	\hfill
	\begin{subfigure}{0.24\linewidth}
		\centering
		\includegraphics[width=\linewidth]{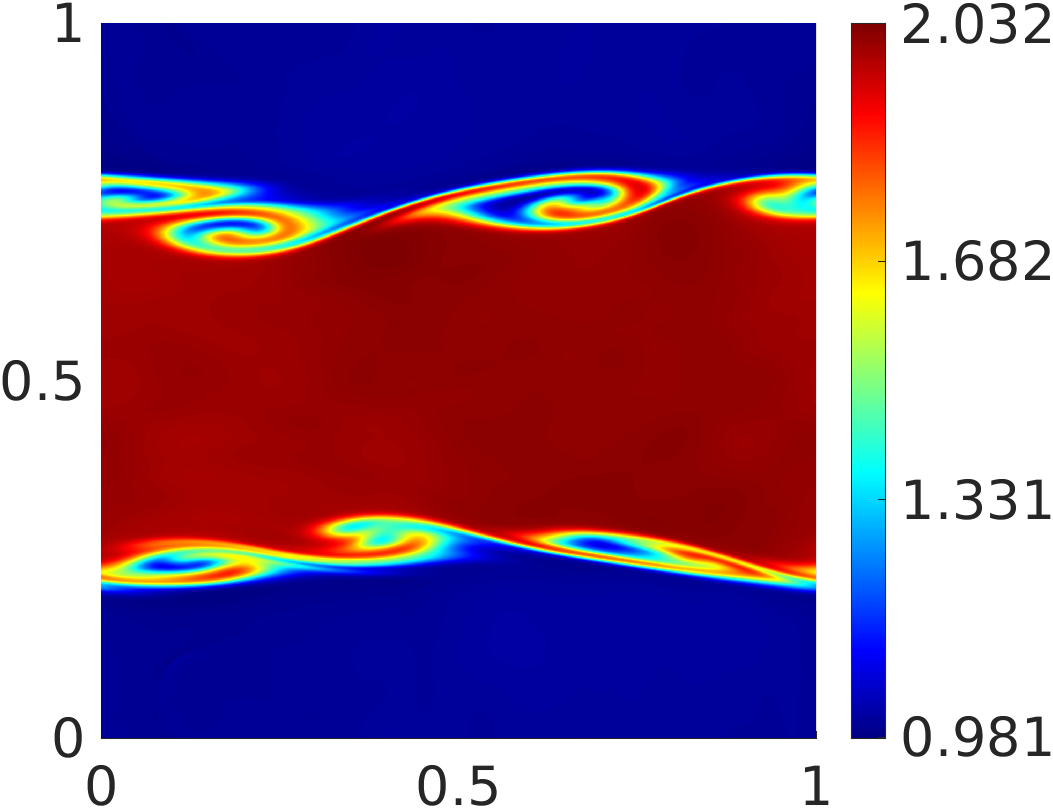}
		\caption{$k = 2048$}
		\label{fig:SimulationN2048}
\end{subfigure}
\caption{Density computed by the VFV scheme at $T= 2$ for the Kelvin-Helmholtz problem on a mesh with $k\times k$ cells.}
\label{non_av_images}
\end{figure}

\begin{figure}
\label{fig3}
	\begin{subfigure}{0.49\linewidth}
		\centering
		\includegraphics[width=\linewidth]{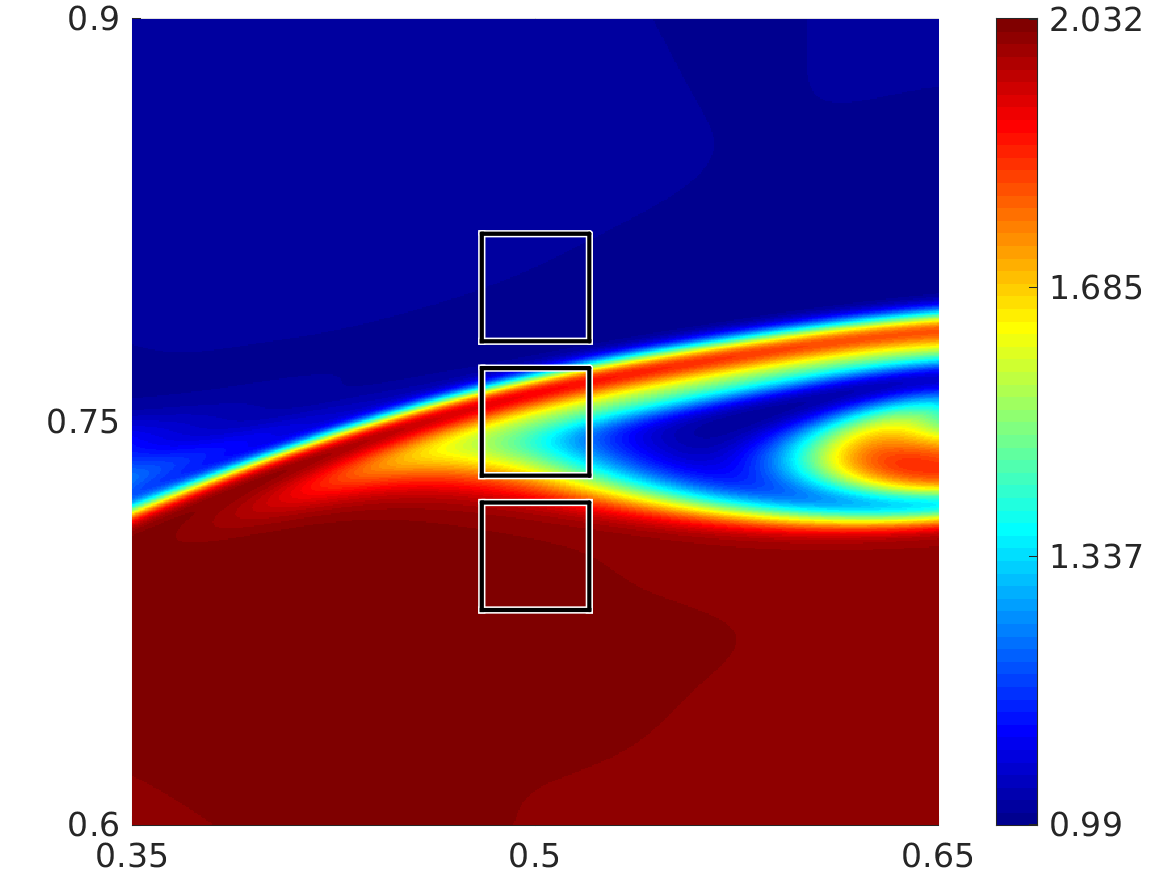}
		\caption{$k=2048$, zoomed in}
		\label{fig:SimulationN2048zoomedIn}
	\end{subfigure}
	\hfill
	\begin{subfigure}{0.49\linewidth}
		\centering
		\includegraphics[width=\linewidth]{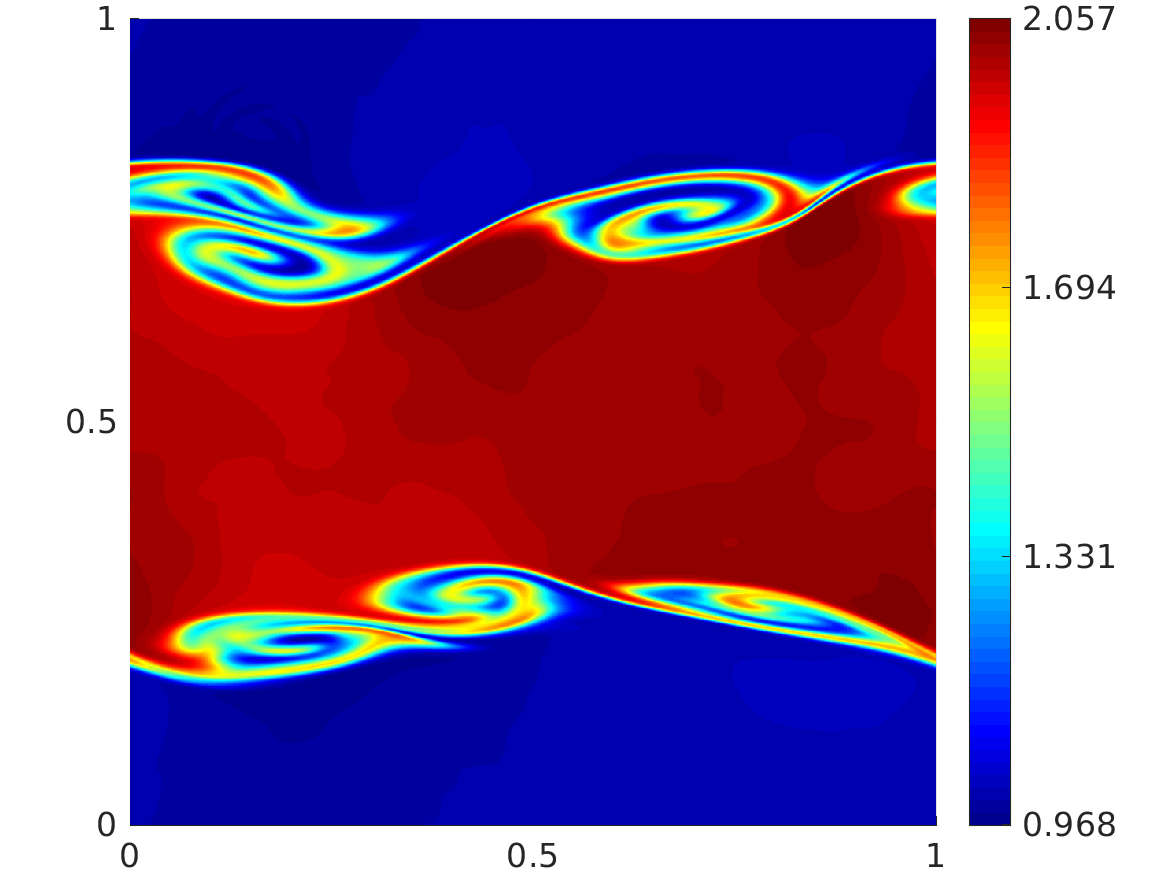}
		\caption{$k=3072$}
		\label{fig:SimulationN3072}
	\end{subfigure}
	\caption{Density computed by the VFV scheme at $T= 2$ for the Kelvin-Helmholtz problem on a mesh with $k\times k$ cells.}
\end{figure}

\begin{figure}
	\begin{subfigure}{0.24\linewidth}
		\centering
		\includegraphics[width=\linewidth]{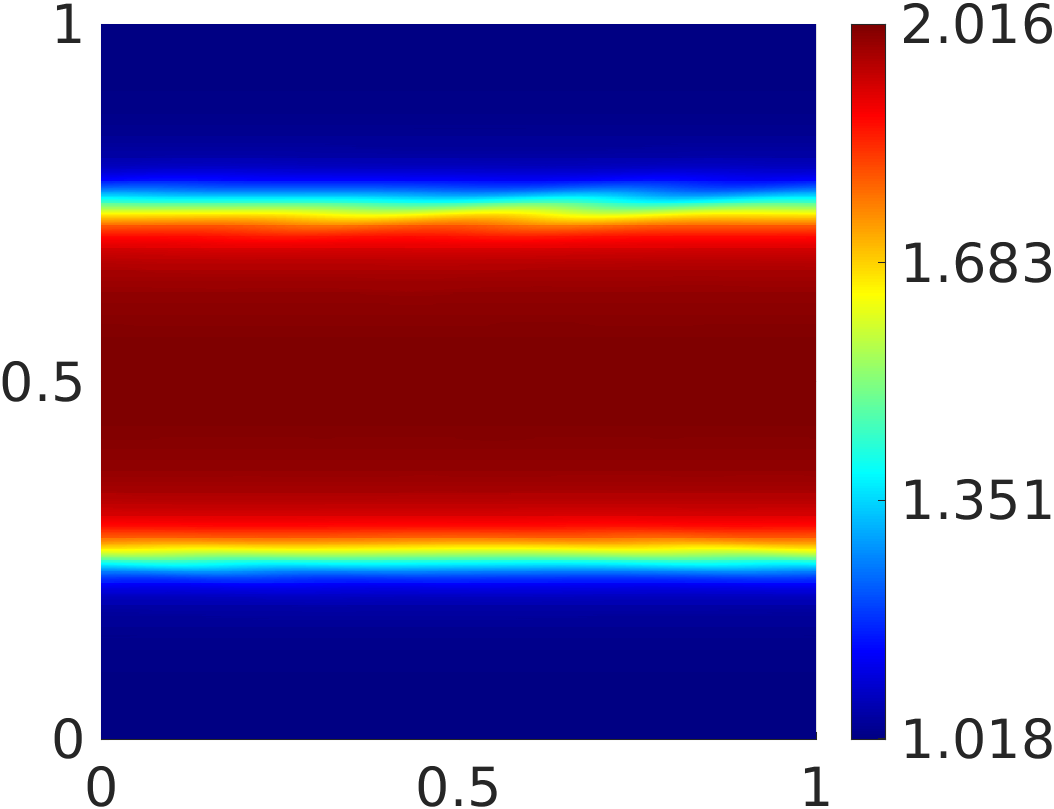}
		\caption{up to $k = 512$}
		\label{fig:Average_upTo512}
	\end{subfigure}
		\hfill
	\begin{subfigure}{0.24\linewidth}
		\centering
		\includegraphics[width=\linewidth]{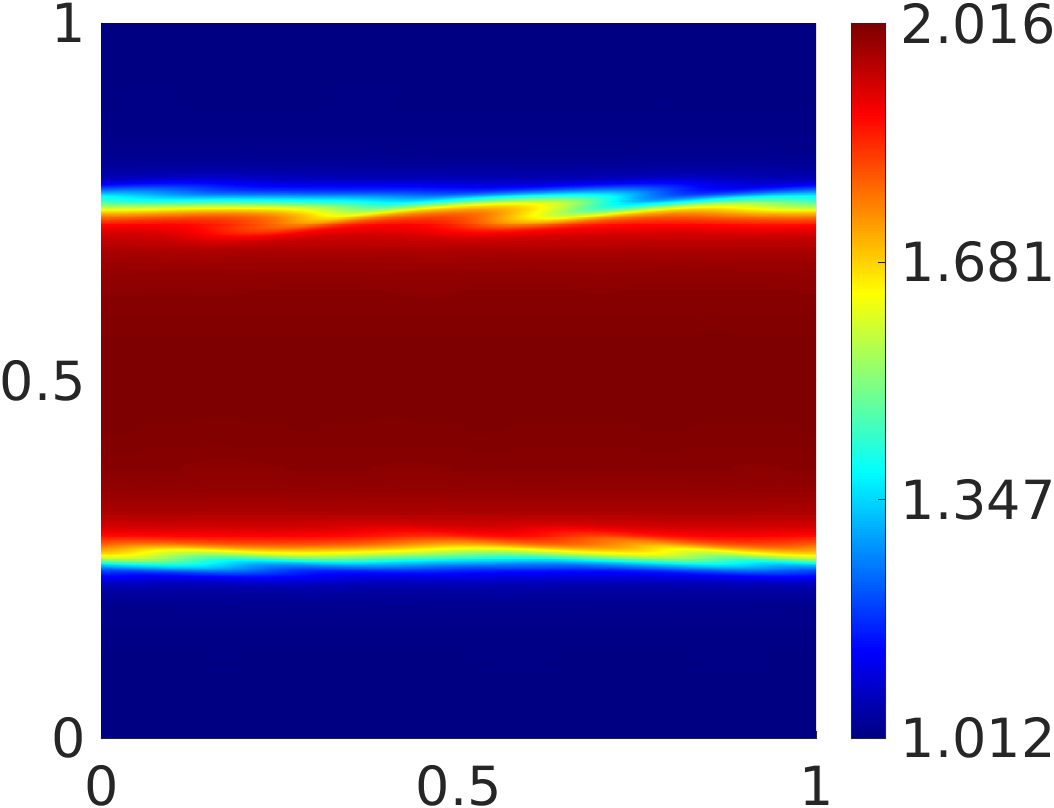}
		\caption{up to $k = 1024$}
		\label{fig:Average_upTo1024}
	\end{subfigure}
	\hfill
	\begin{subfigure}{0.24\linewidth}
		\centering
		\includegraphics[width=\linewidth]{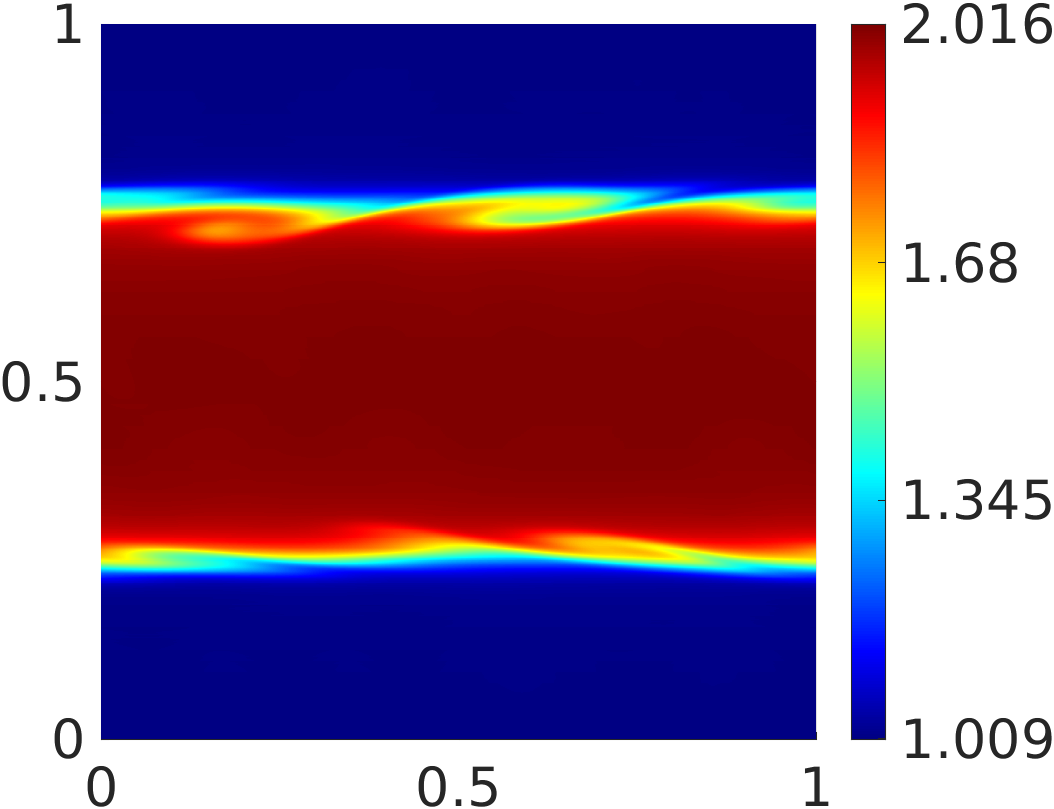}
		\caption{up to $k = 1536$}
		\label{fig:Average_upTo1536}
	\end{subfigure}
	\hfill
	\begin{subfigure}{0.24\linewidth}
		\centering
		\includegraphics[width=\linewidth]{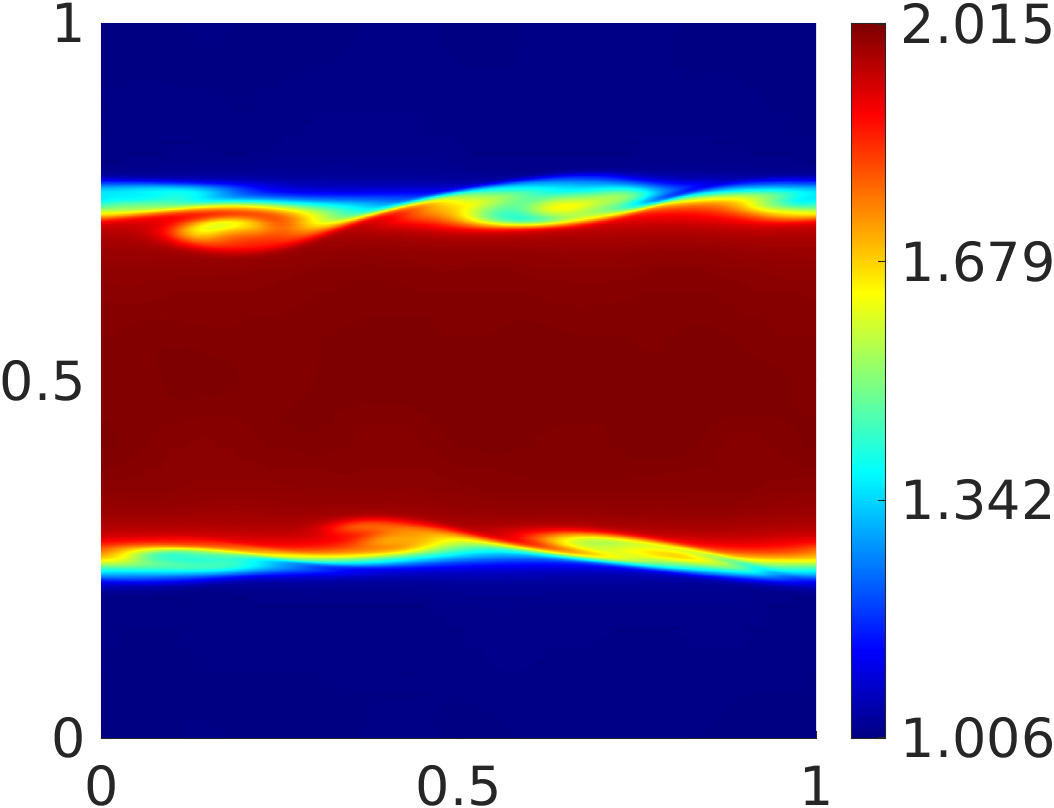}
		\caption{up to $k = 2048$}
		\label{fig:Average_upTo2048}
	\end{subfigure}
	\caption{Ces\`aro averages of the density computed by the VFV method on meshes with $j\times j$ cells, $j= 32,\;64,\;96,\dots,\;k$, for the Kelvin-Helmholtz problem.}
	\label{av_images}
\end{figure}

\begin{figure}
	\begin{subfigure}{0.24\linewidth}
		\centering
		\includegraphics[width=\linewidth]{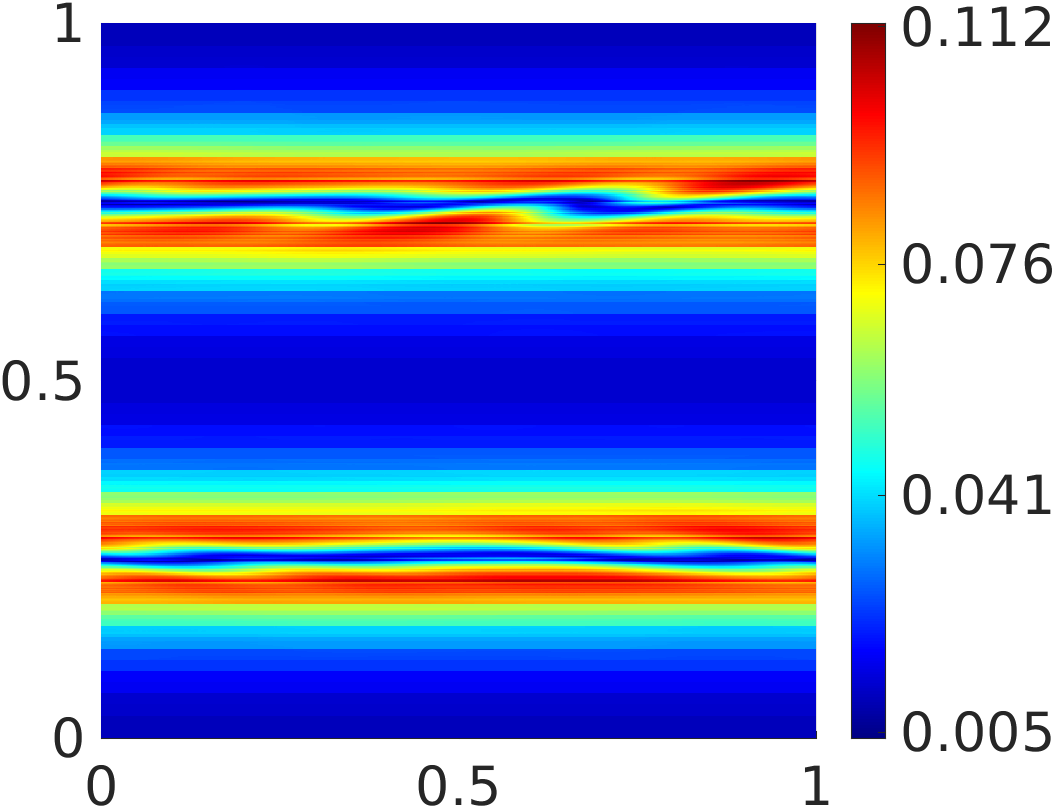}
		\caption{up to $k = 512$}
		\label{fig:firstVar_512}
	\end{subfigure}
	\hfill
	\begin{subfigure}{0.24\linewidth}
		\centering
		\includegraphics[width=\linewidth]{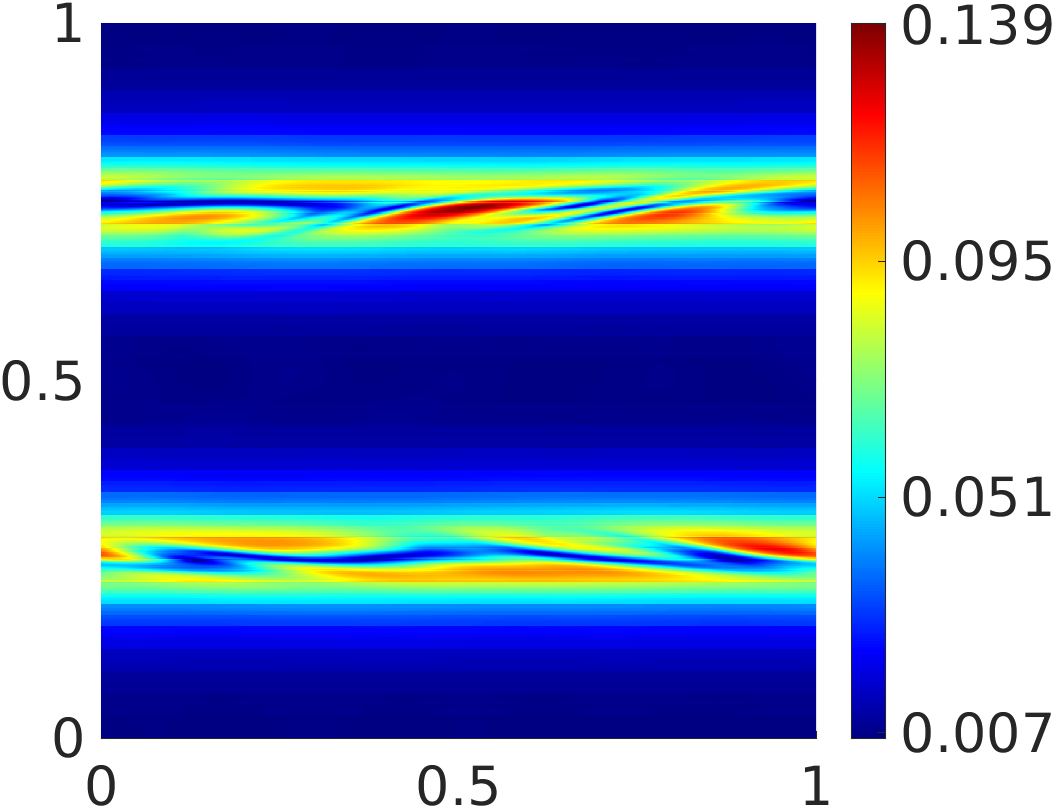}
		\caption{up to $k = 1024$}
		\label{fig:firstVar_1024}
	\end{subfigure}
	\hfill
	\begin{subfigure}{0.24\linewidth}
		\centering
		\includegraphics[width=\linewidth]{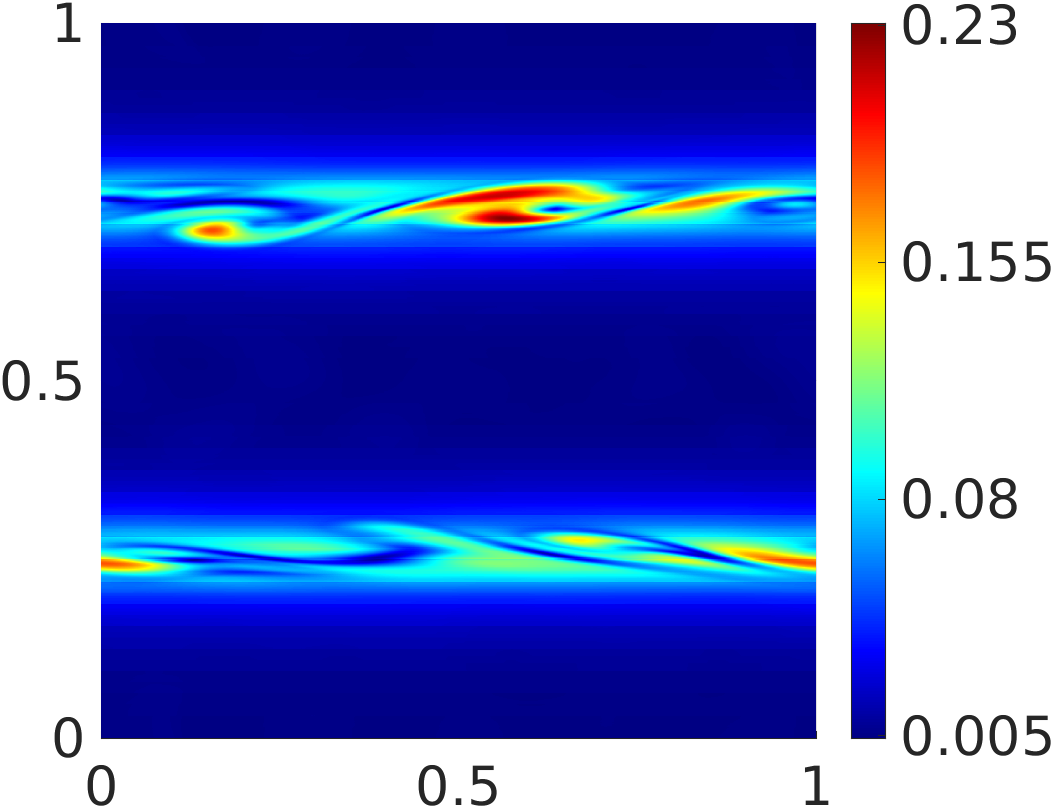}
		\caption{up to $k = 1536$}
		\label{fig:firstVar_1536}
	\end{subfigure}
	\hfill
	\begin{subfigure}{0.24\linewidth}
		\centering
		\includegraphics[width=\linewidth]{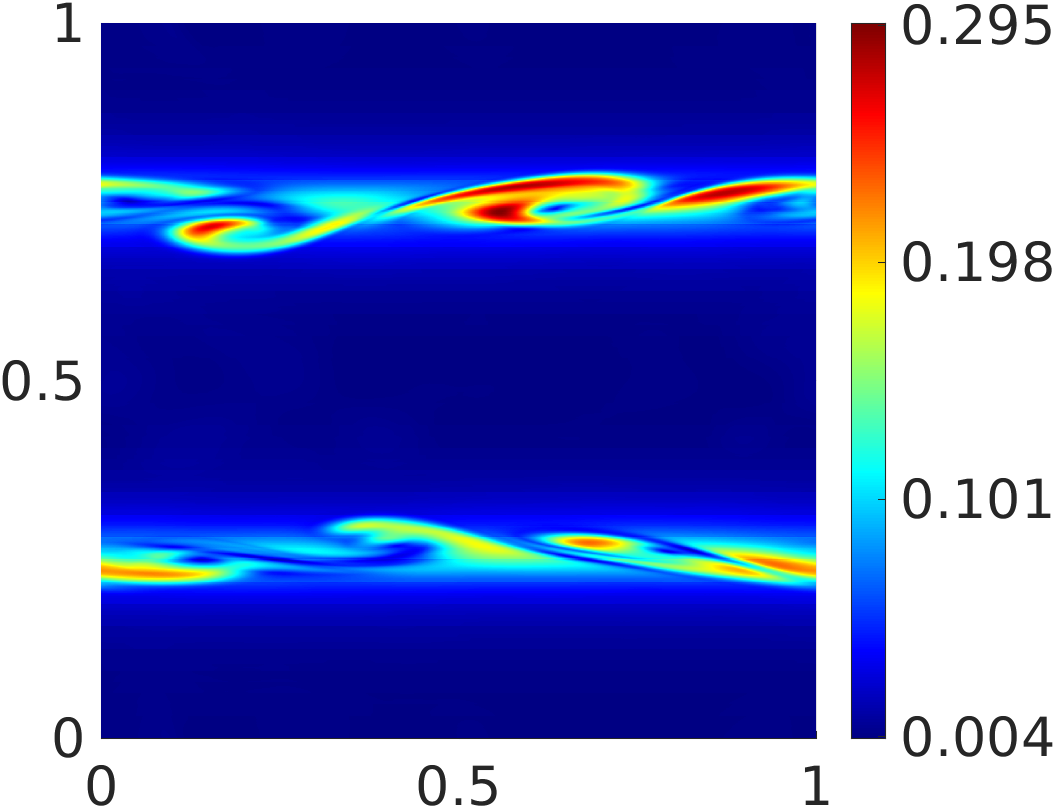}
		\caption{up to $k = 2048$}
		\label{fig:firstVar_2048}
	\end{subfigure}
	\caption{First variance of the density computed by the VFV method on meshes with $j\times j$ cells, $j= 32,\;64,\;96,\dots,\;k$, for the Kelvin-Helmholtz problem.}
	\label{var_images}
\end{figure}

\begin{table}
	\caption{Convergence study for different summation methods in the $L^1$-norm in space and the Wasserstein distance for measures
$ \sum_{n=1}^N \omega(n/N)\delta_{\vr_n(T,x)}/\sum_{m=1}^N \omega(m/N)$ at time $T = 2.$
}
	\label{tab3}
	\begin{center}
		\begin{tabular}{|cV{3}c|cV{3}c|cV{3}c|cV{3}c|c|}
		\hline
		$k$&
		\multicolumn{2}{cV{3}}{$\omega_{\text{equal}}$}& \multicolumn{2}{cV{3}}{$\omega_{\text{quad}}$}& \multicolumn{2}{cV{3}}{$\omega_{\text{sin2}}$}& \multicolumn{2}{c|}{$\omega_{\text{exp}}$}\\
		\cline{2-9}
		(up to)& error   & order& error   & order& error   & order& error   & order\\
		\hline
48	 & 1.27e-01 	 & - 	 & 1.79e-01 	 & - 	 & 1.61e-01 	 & - 	 & 1.75e-01 	 & - 	 \\
64	 & 1.05e-01 	 & 0.66 	 & 1.54e-01 	 & 0.52 	 & 1.22e-01 	 & 0.96 	 & 1.50e-01 	 & 0.54 	 \\
96	 & 8.49e-02 	 & 0.52 	 & 1.35e-01 	 & 0.32 	 & 9.51e-02 	 & 0.61 	 & 1.30e-01 	 & 0.35 	 \\
128	 & 6.90e-02 	 & 0.72 	 & 1.18e-01 	 & 0.47 	 & 7.51e-02 	 & 0.82 	 & 1.16e-01 	 & 0.40 	 \\
192	 & 5.49e-02 	 & 0.56 	 & 1.03e-01 	 & 0.34 	 & 6.12e-02 	 & 0.50 	 & 1.03e-01 	 & 0.29 	 \\
256	 & 4.33e-02 	 & 0.83 	 & 8.92e-02 	 & 0.50 	 & 5.02e-02 	 & 0.69 	 & 9.23e-02 	 & 0.38 	 \\
384	 & 3.31e-02 	 & 0.66 	 & 7.65e-02 	 & 0.38 	 & 4.11e-02 	 & 0.49 	 & 8.16e-02 	 & 0.30 	 \\
512	 & 2.47e-02 	 & 1.02 	 & 6.46e-02 	 & 0.59 	 & 3.23e-02 	 & 0.84 	 & 7.07e-02 	 & 0.50 	 \\
768	 & 1.77e-02 	 & 0.82 	 & 5.34e-02 	 & 0.47 	 & 2.37e-02 	 & 0.76 	 & 5.61e-02 	 & 0.57 	 \\
1024	 & 1.24e-02 	 & 1.24 	 & 4.37e-02 	 & 0.70 	 & 1.51e-02 	 & 1.57 	 & 3.32e-02 	 & 1.82 	 \\
\hline
	\end{tabular}
	\end{center}
\end{table}

To illustrate the probabilistic nature of the limiting solution
we present in Figures~\ref{hist_bot}, \ref{hist_mid} and \ref{hist_top} the approximations of the probability density at time $T=2$ of the Ces\` aro averages computed on meshes with $j \times j$ cells, $j \in \{32m\Big|\;m \in \mathds{N},\; 1 \leq m \leq k\}$, averaged in space on the domains $(0.48,0.52)\times(0.68,0.72)$, $(0.48,0.52)\times(0.73,0.77)$  and $(0.48,0.52)\times(0.78,0.82)$, respectively. These three regions are depicted in Figure~\ref{fig:SimulationN2048zoomedIn}. Note that these regions are chosen in such a way that they are either completely below, right on or completely above of the initial upper interface $J_2$.
These figures yields further evidence of S-convergence as documented in Table~\ref{tab3}. The latter presents S-convergence with respect to various summation methods using the weighted functions $\omega_\text{equal},\ \omega_\text{quad}$, $\omega_\text{sin2}$ and $\omega_\text{exp}$.
In particular, we compute  the experimental convergence of the weighted averages $ \sum_{n=1}^N \omega(n/N)\delta_{\vr_n(T,x)}/\sum_{m=1}^N \omega(m/N)$ in the Wasserstein distance.   The corresponding reference solutions were obtained using meshes with
$k = 32, 48, 64, 96, 128,192,256,384,512,768,1024, 1536,3072$ and the respective weight functions~$\omega.$

\begin{figure}
	\begin{subfigure}{0.19\linewidth}
		\centering
		\includegraphics[width=\linewidth]{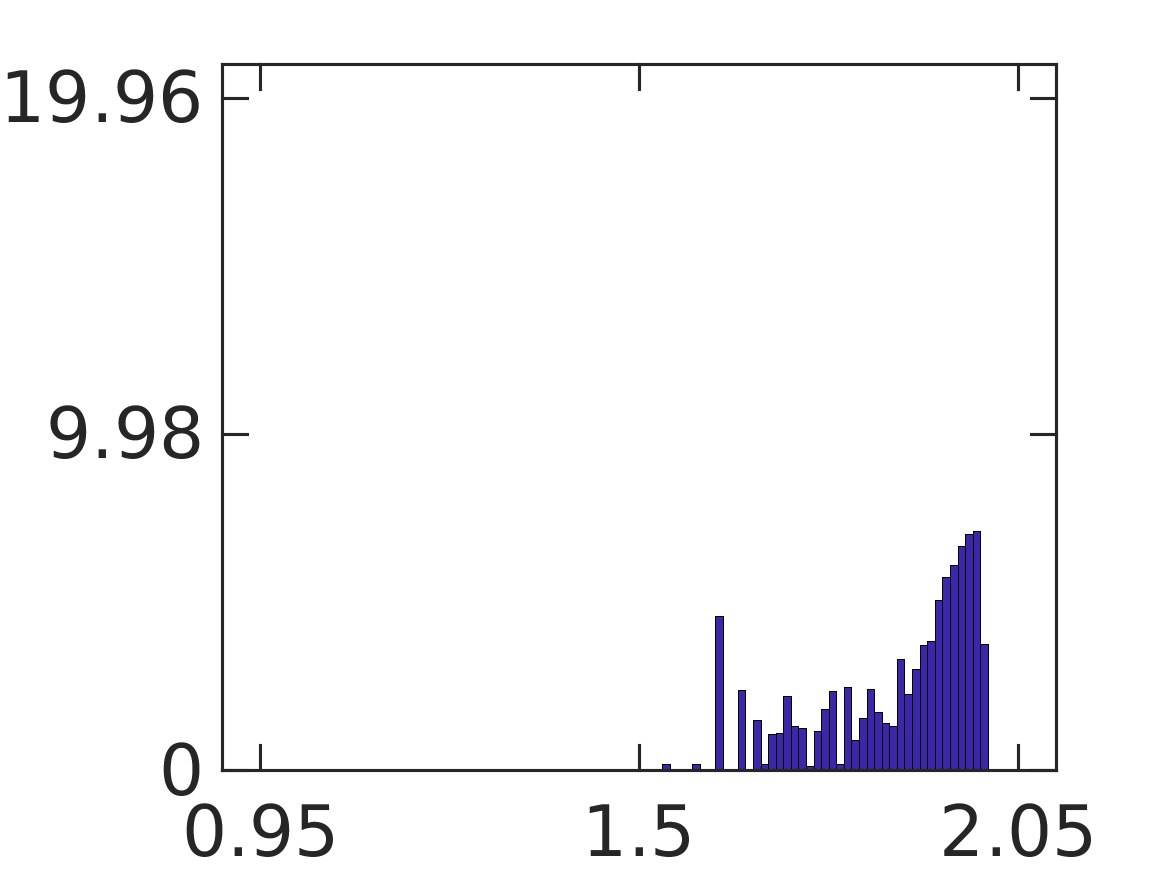}
		\caption{up to $k = 512$}
		\label{fig:histbot_upTo512}
	\end{subfigure}
	\hfill
	\begin{subfigure}{0.19\linewidth}
		\centering
		\includegraphics[width=\linewidth]{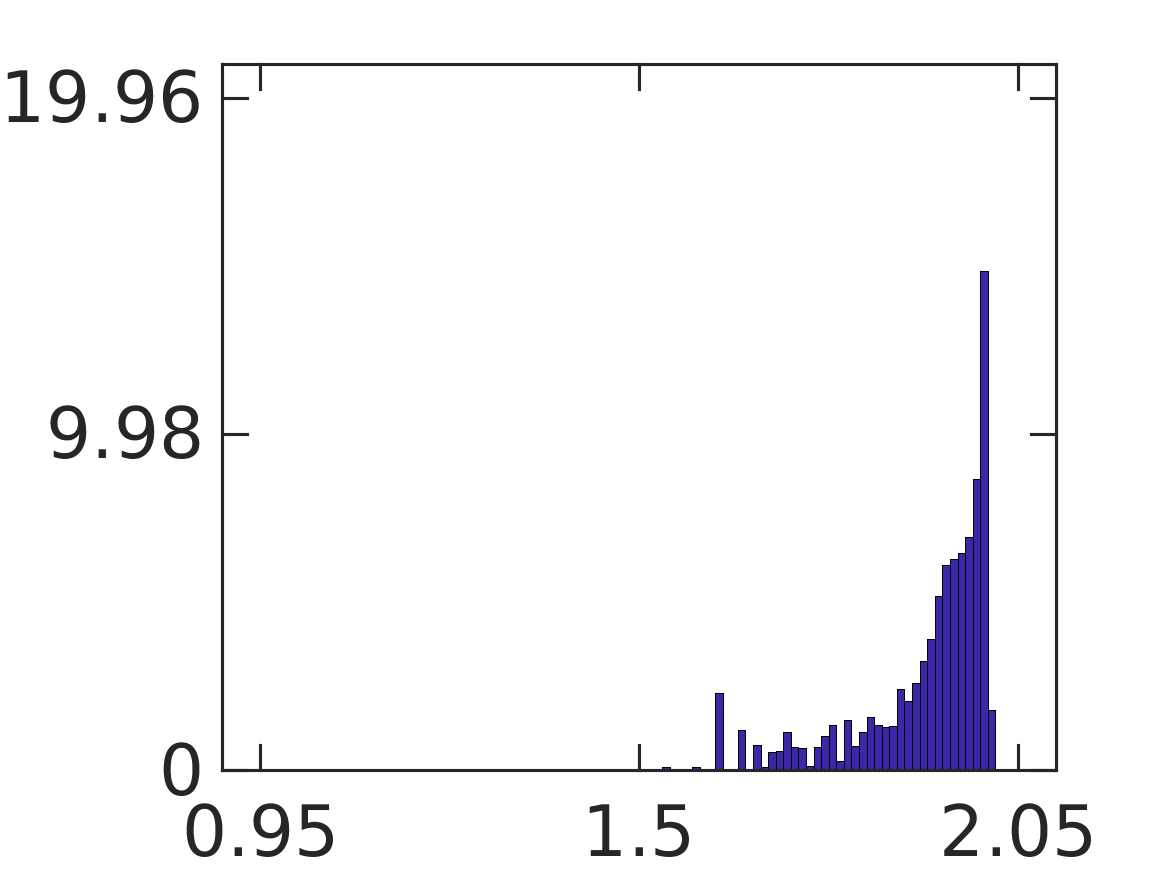}
		\caption{up to $k = 1024$}
		\label{fig:histbot_upTo1024}
	\end{subfigure}
	\hfill
	\begin{subfigure}{0.19\linewidth}
		\centering
		\includegraphics[width=\linewidth]{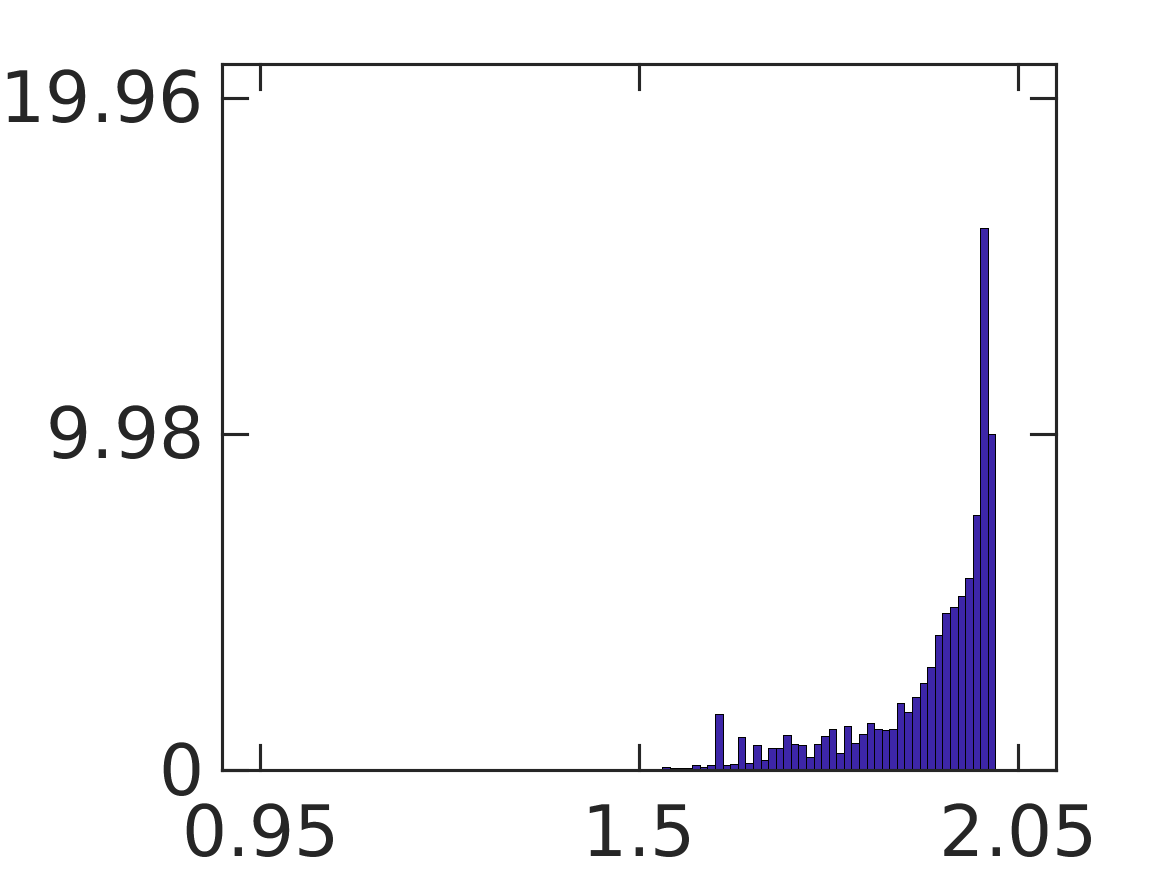}
		\caption{up to $k = 1536$}
		\label{fig:histbot_upTo1536}
	\end{subfigure}
	\hfill
	\begin{subfigure}{0.19\linewidth}
		\centering
		\includegraphics[width=\linewidth]{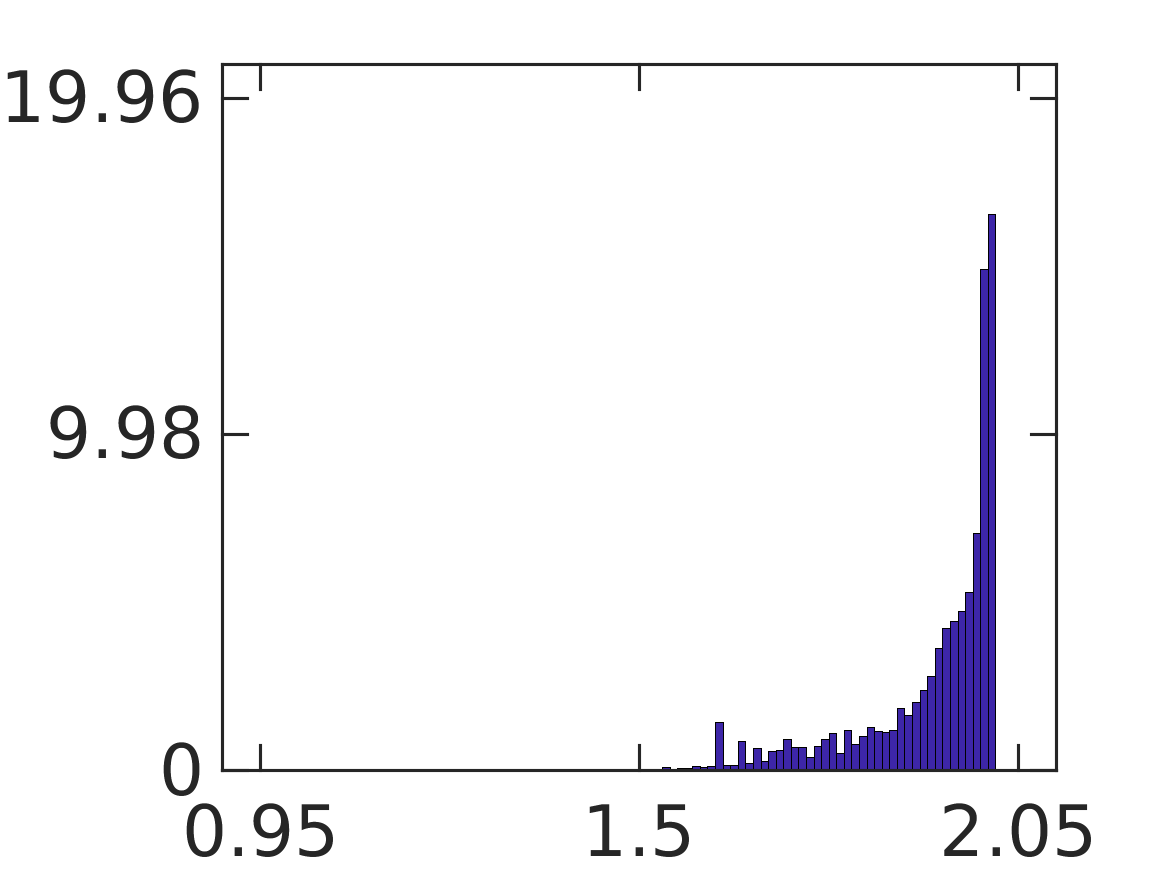}
		\caption{up to $k = 1792$}
		\label{fig:histbot_upTo1792}
	\end{subfigure}
	\hfill
	\begin{subfigure}{0.19\linewidth}
		\centering
		\includegraphics[width=\linewidth]{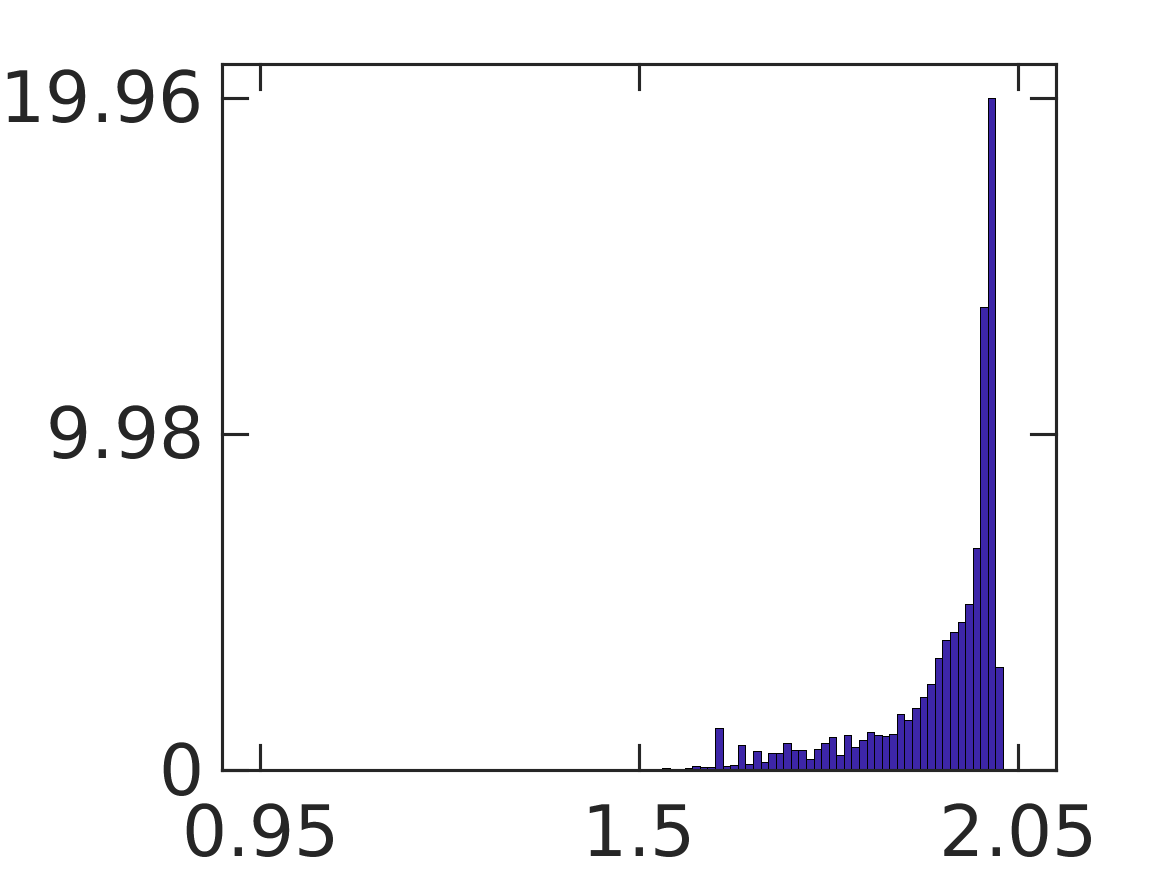}
		\caption{up to $k = 2048$}
		\label{fig:histbot_upTo2048}
	\end{subfigure}
	\caption{Probability density computed by the VFV method on meshes with $j\times j$ cells, $j= 32,\;64,\;96,\dots,\;k$, for the Kelvin-Helmholtz problem on the domain $(0.48,0.52)\times(0.68,0.72)$.}
	\label{hist_bot}
\end{figure}

\begin{figure}
	\begin{subfigure}{0.19\linewidth}
		\centering
		\includegraphics[width=\linewidth]{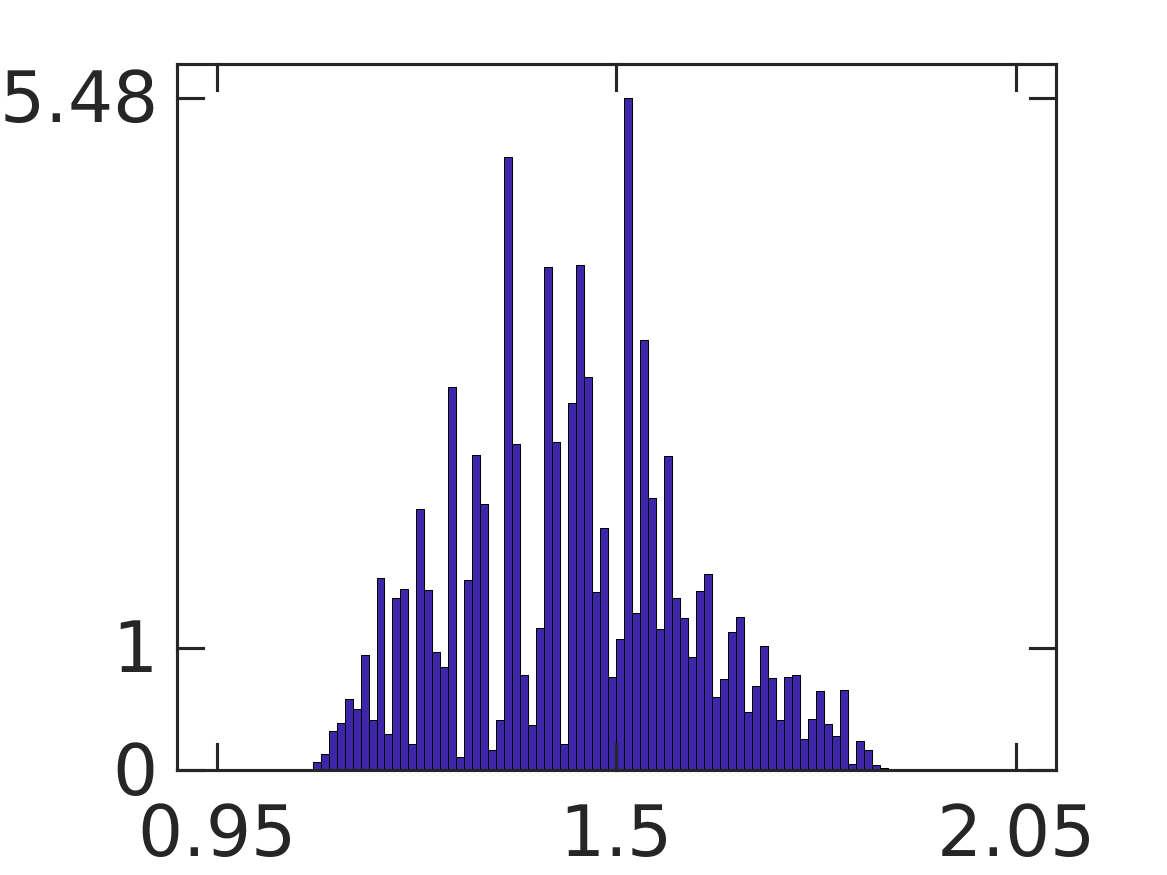}
		\caption{up to $k = 512$}
		\label{fig:histmid_upTo512}
	\end{subfigure}
	\hfill
	\begin{subfigure}{0.19\linewidth}
		\centering
		\includegraphics[width=\linewidth]{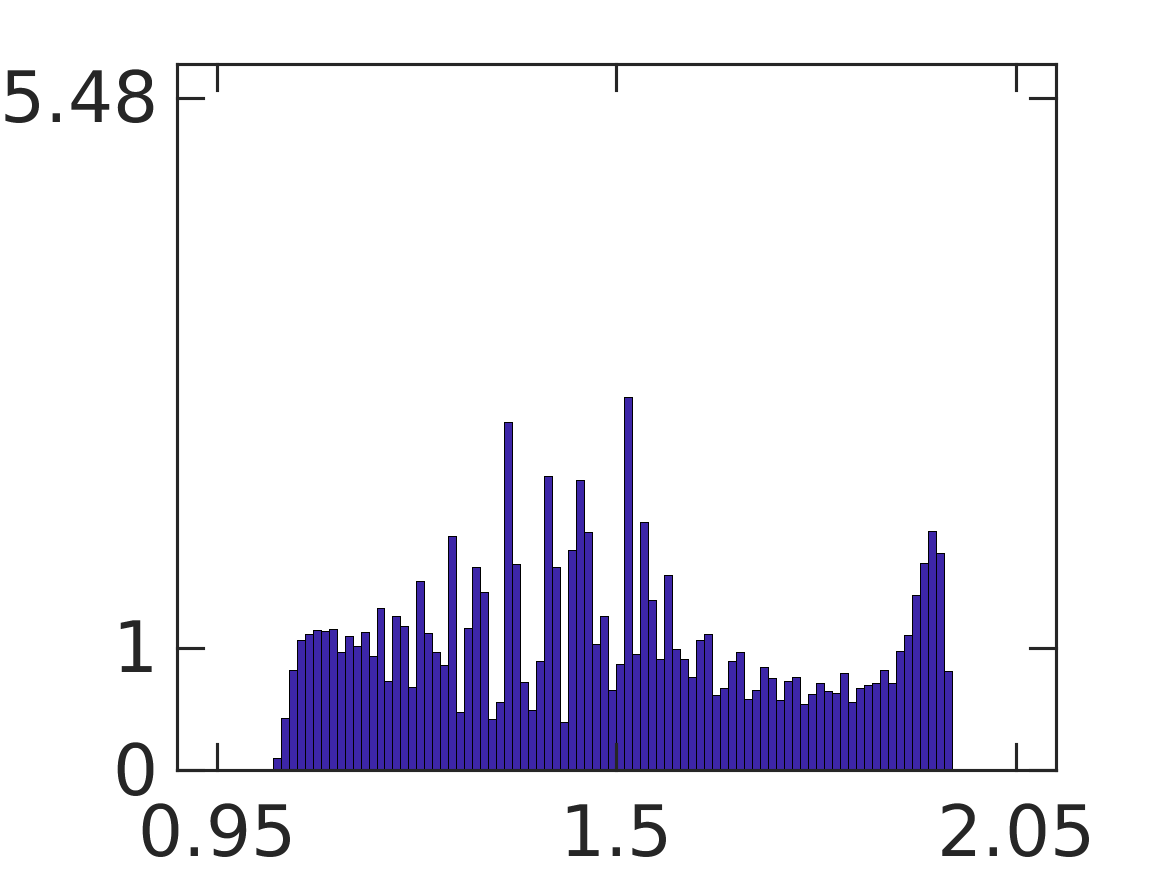}
		\caption{up to $k = 1024$}
		\label{fig:histmid_upTo1024}
	\end{subfigure}
	\hfill
	\begin{subfigure}{0.19\linewidth}
		\centering
		\includegraphics[width=\linewidth]{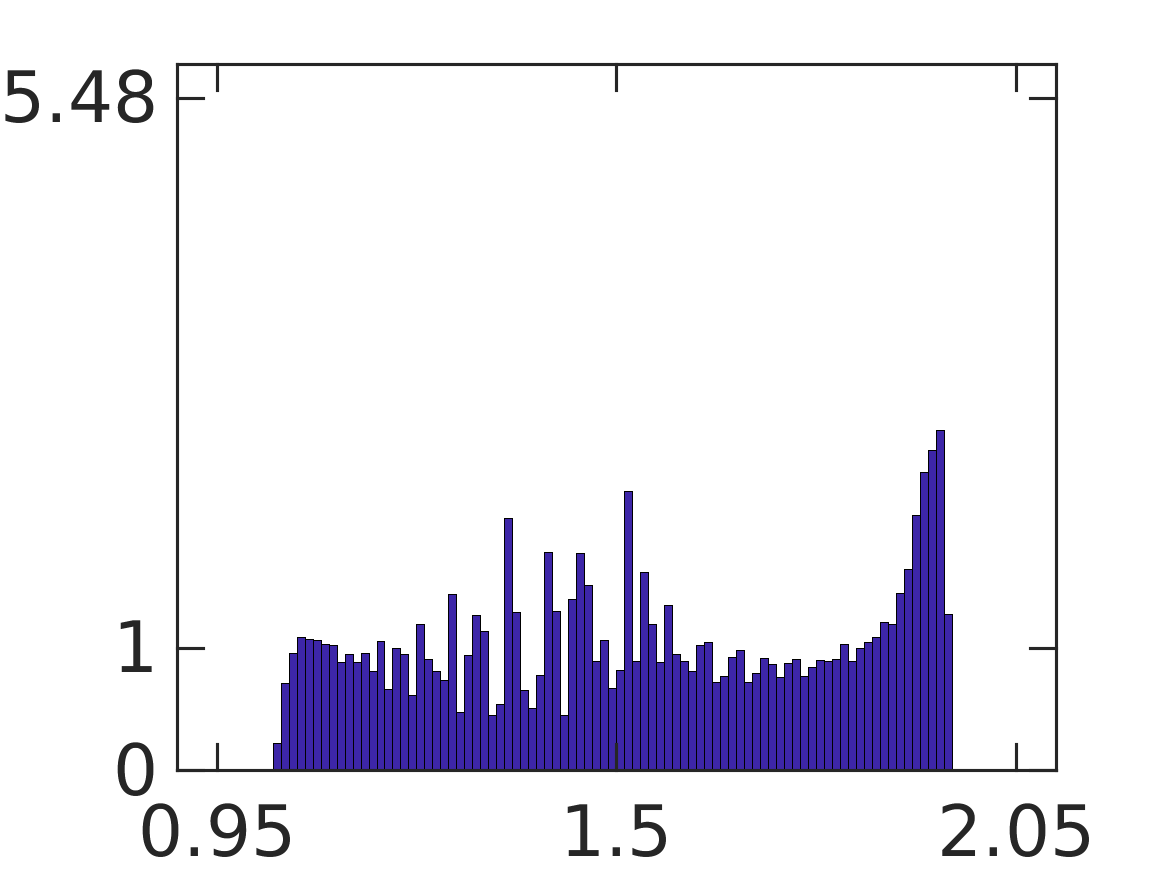}
		\caption{up to $k = 1536$}
		\label{fig:histmid_upTo1536}
	\end{subfigure}
	\hfill
	\begin{subfigure}{0.19\linewidth}
		\centering
		\includegraphics[width=\linewidth]{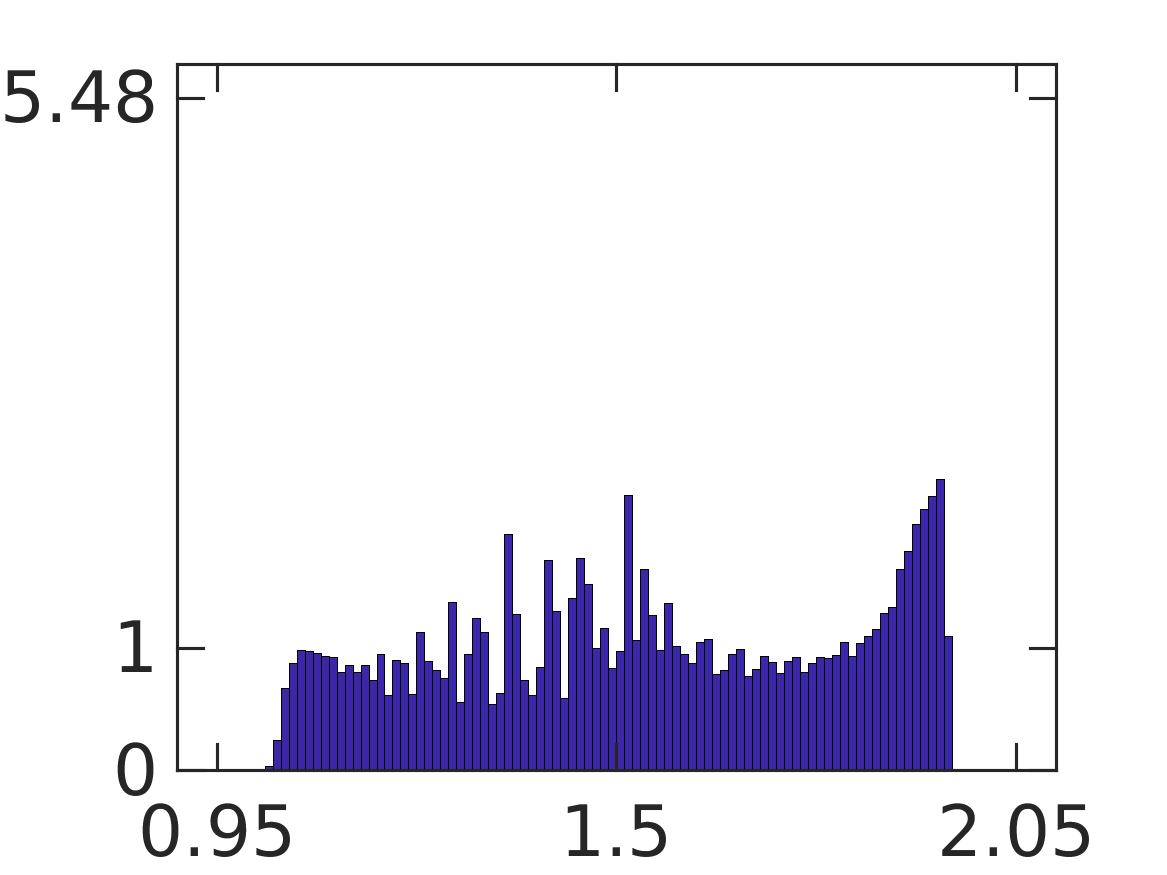}
		\caption{up to $k = 1792$}
		\label{fig:histmid_upTo1792}
	\end{subfigure}
	\hfill
	\begin{subfigure}{0.19\linewidth}
		\centering
		\includegraphics[width=\linewidth]{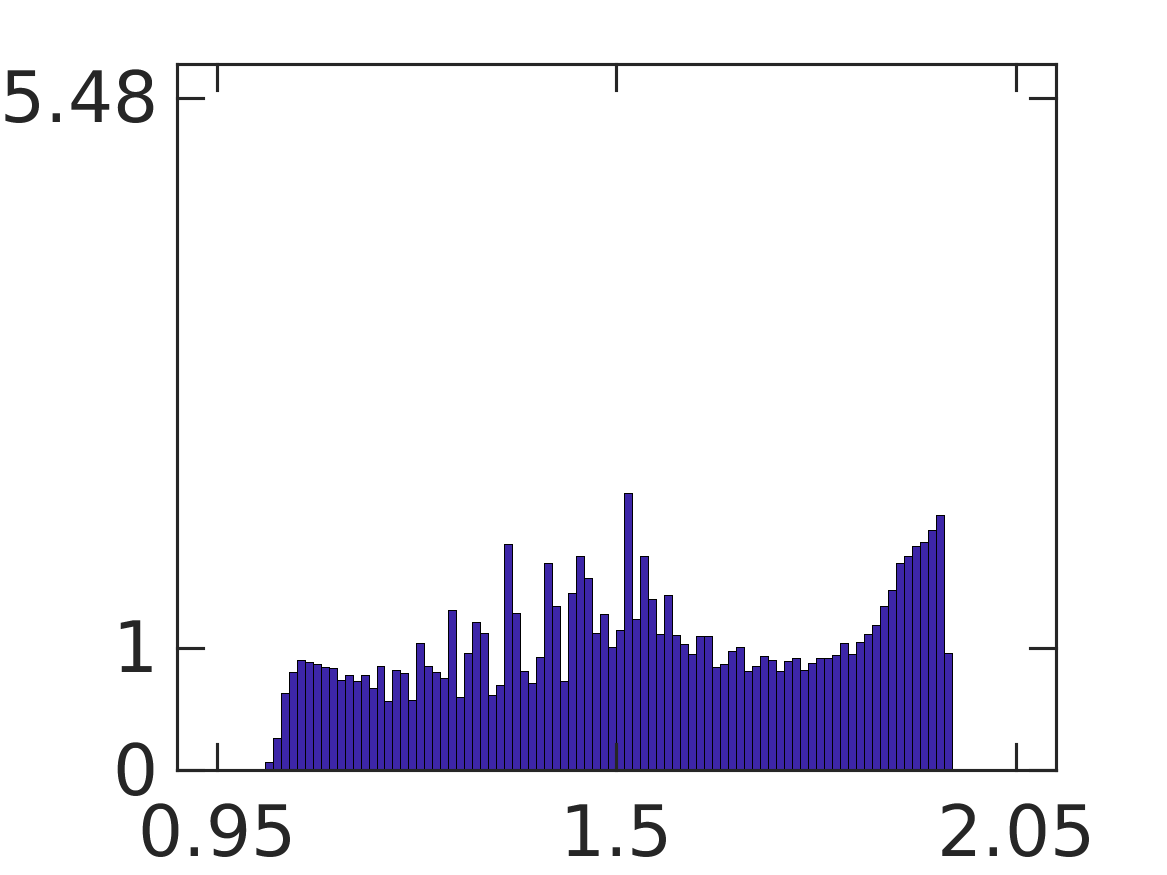}
		\caption{up to $k = 2048$}
		\label{fig:histmid_upTo2048}
	\end{subfigure}
	\caption{Probability density computed by the VFV method on meshes with $j\times j$ cells, $j= 32,\;64,\;96,\dots,\;k$, for the Kelvin-Helmholtz problem on the domain $(0.48,0.52)\times(0.73,{0.77})$.}
	\label{hist_mid}
\end{figure}

\begin{figure}
	\begin{subfigure}{0.19\linewidth}
		\centering
		\includegraphics[width=\linewidth]{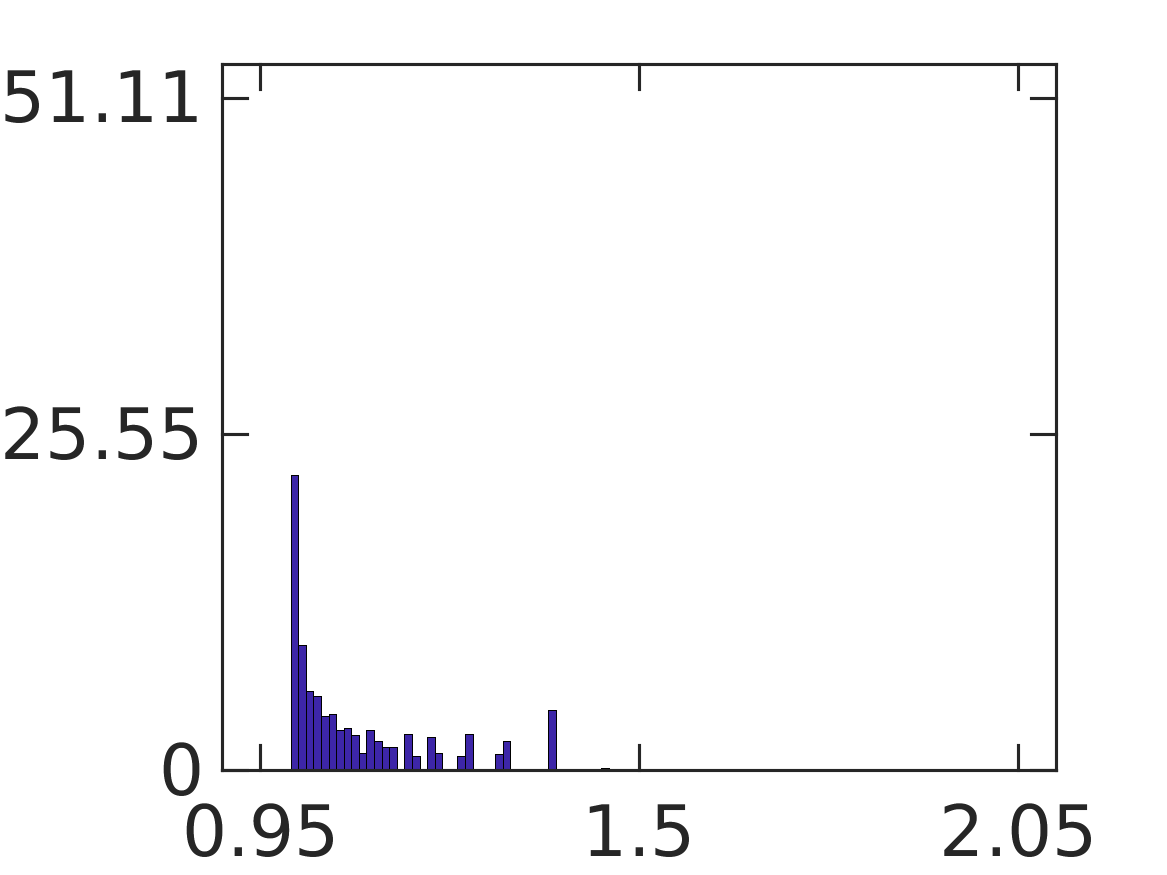}
		\caption{up to $k = 512$}
		\label{fig:histtop_upTo512}
	\end{subfigure}
	\hfill
	\begin{subfigure}{0.19\linewidth}
		\centering
		\includegraphics[width=\linewidth]{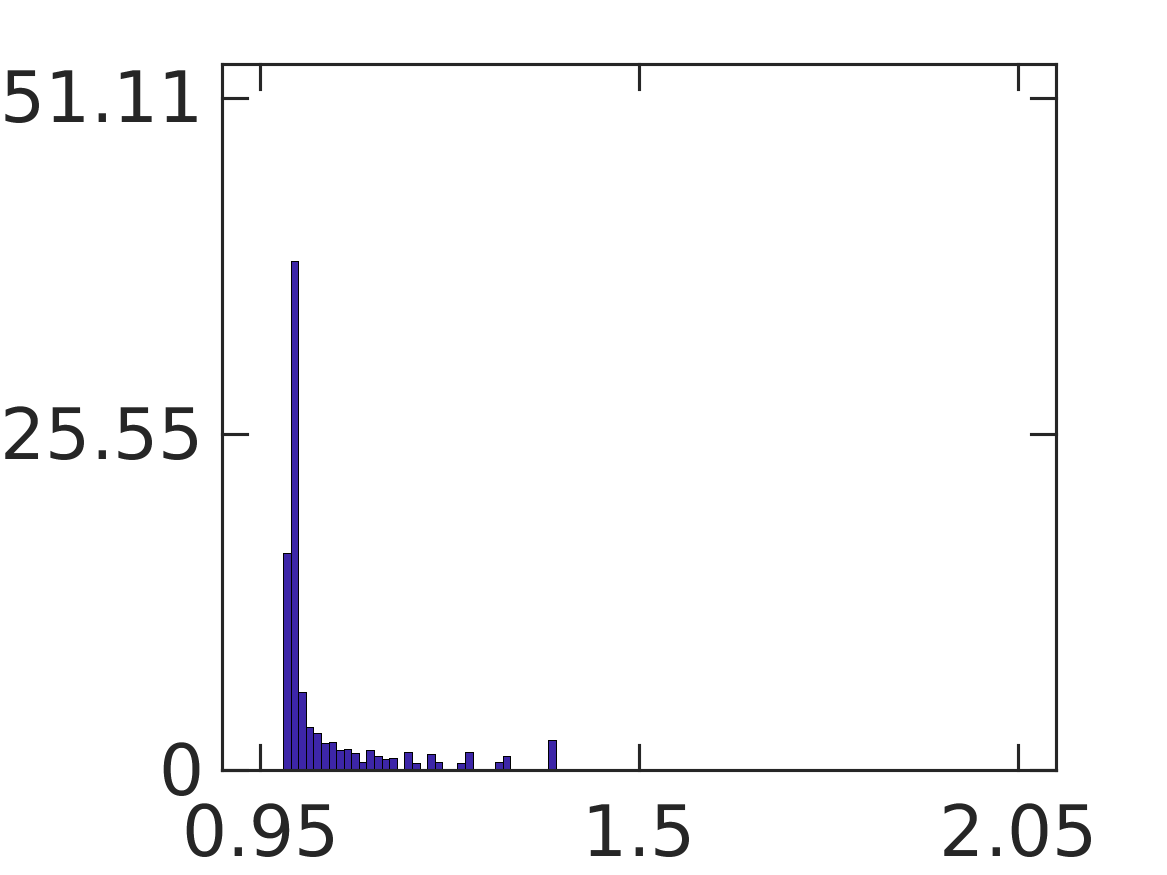}
		\caption{up to $k = 1024$}
		\label{fig:histtop_upTo1024}
	\end{subfigure}
	\hfill
	\begin{subfigure}{0.19\linewidth}
		\centering
		\includegraphics[width=\linewidth]{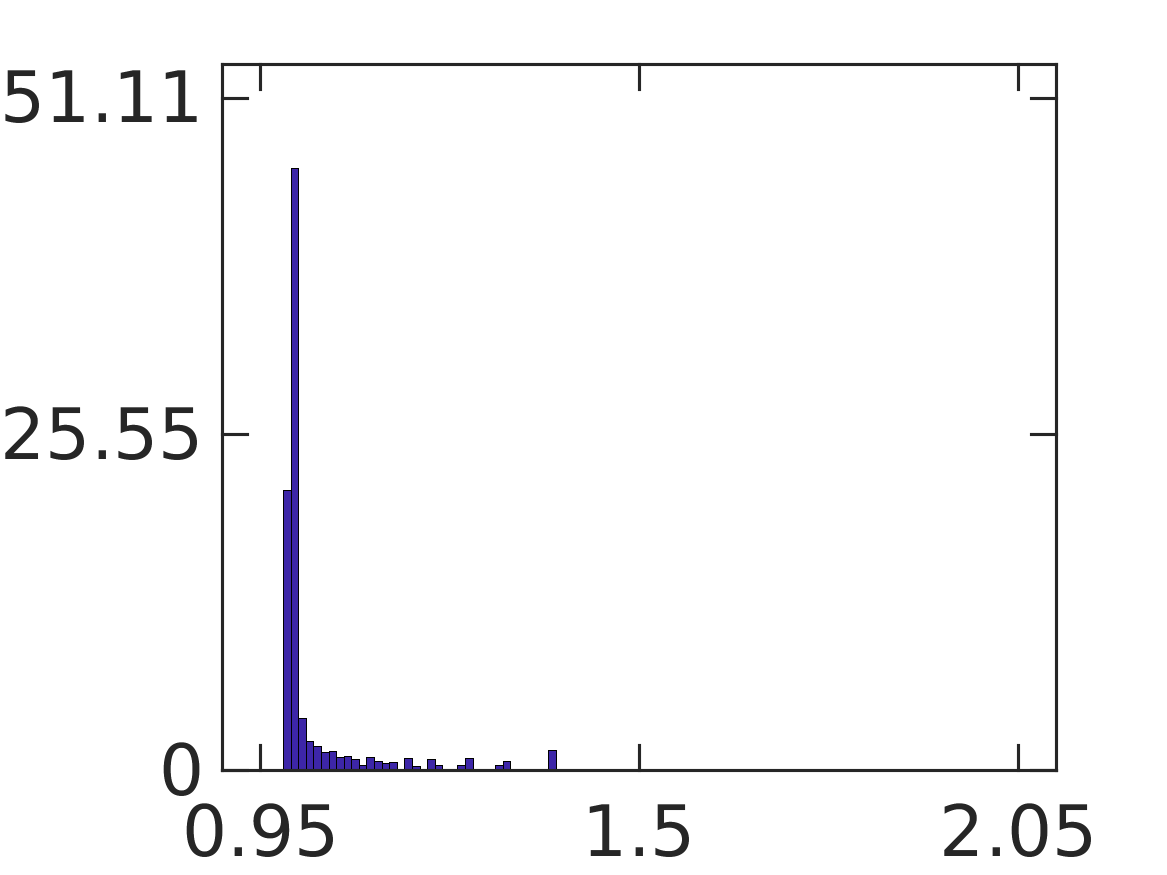}
		\caption{up to $k = 1536$}
		\label{fig:histtop_upTo1536}
	\end{subfigure}
	\hfill
	\begin{subfigure}{0.19\linewidth}
		\centering
		\includegraphics[width=\linewidth]{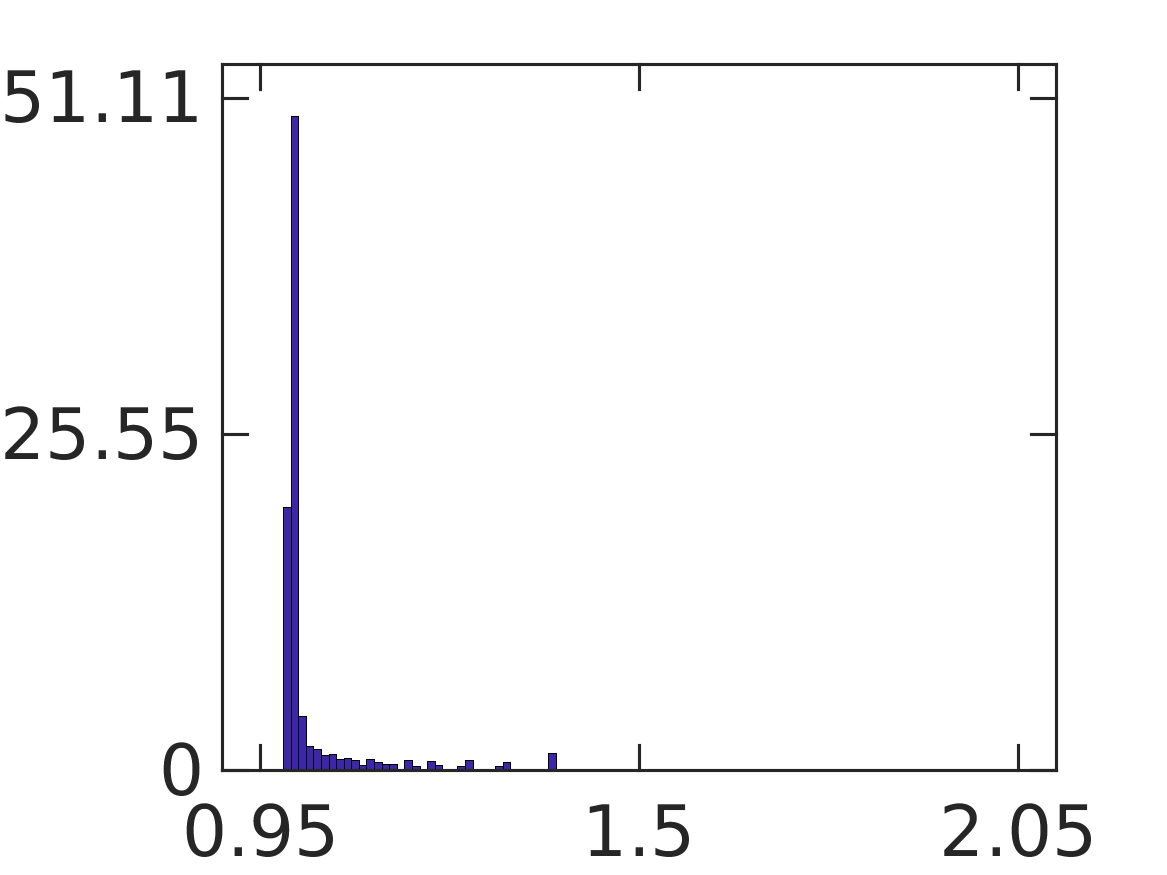}
		\caption{up to $k = 1792$}
		\label{fig:histtop_upTo1792}
	\end{subfigure}
	\hfill
	\begin{subfigure}{0.19\linewidth}
		\centering
		\includegraphics[width=\linewidth]{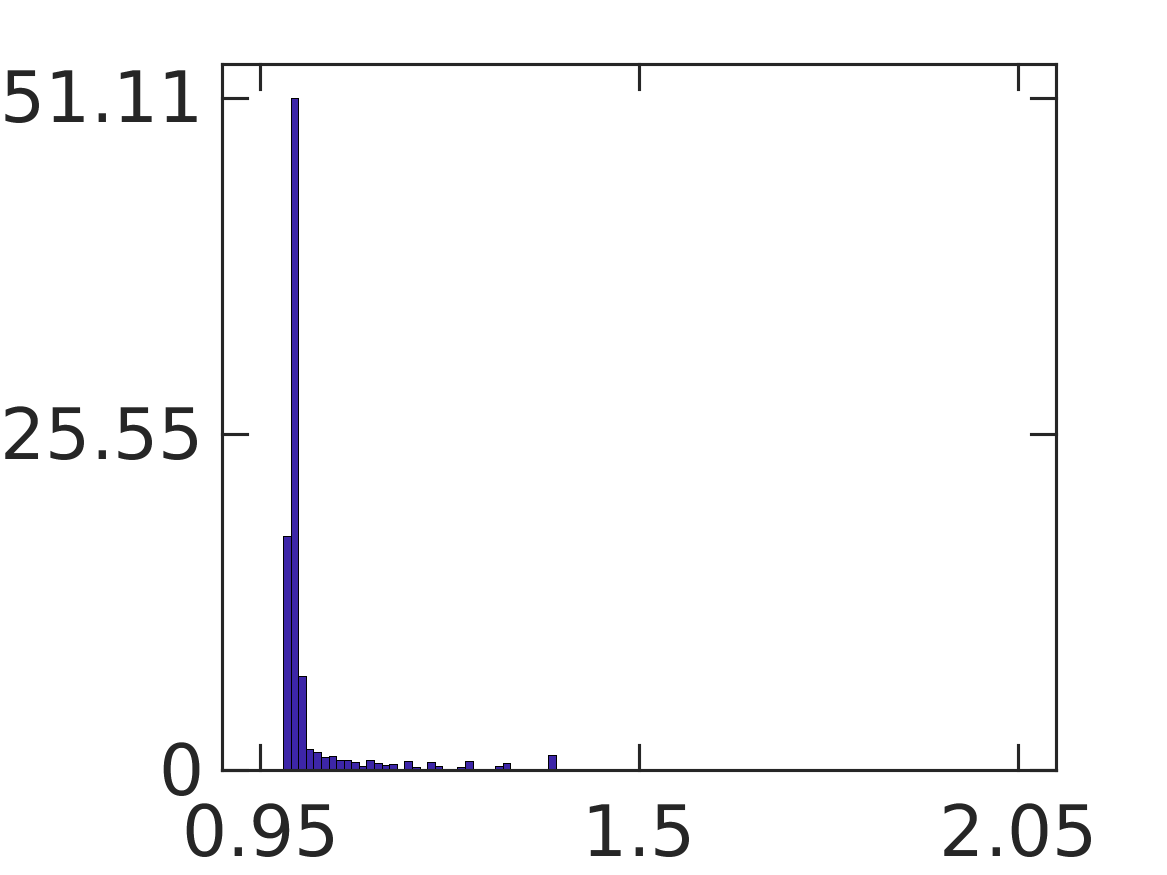}
		\caption{up to $k = 2048$}
		\label{fig:histtop_upTo2048}
	\end{subfigure}
	\caption{Probability density computed by the VFV method on meshes with $j\times j$ cells, $j= 32,\;64,\;96,\dots,\;k$, for the Kelvin-Helmholtz problem on the domain $(0.48,0.52)\times(0.78,0.82)$.}
	\label{hist_top}
\end{figure}

\section{Kolmogorov K41 hypothesis}
\label{K}

The celebrated Kolmogorov K41 hypothesis for incompressible flow has been extrapolated to the compressible setting by Chen and Glimm
\cite{CheGli}. In particular, it yields compactness of any family of (weak) solutions $[\vr_n, \vm_n]_{n=1}^\infty$ of the Navier--Stokes system in the zero viscosity regime $\mu_n \searrow 0$, $\lambda_n \to 0$. This {yields} strong convergence
\[
\vr_{n_k} \to \vr \ \mbox{in}\ L^1((0,T) \times \Td),\
\vm_{n_k} \to \vm \ \mbox{in}\ L^1((0,T) \times \Td; \mathds{R}^d)
\]
at least for a suitable subsequence. Thus if $[\vr, \vm]$, or at least $\vm$, are observables in the sense of Definition
\ref{PD4}, the limit is
the same for any subsequence. In other words, there is no need of subsequence and the convergence is strong unconditionally.
We call this hypothesis (KH). It is easy to check that (KH) yields uniqueness of the viscosity solution,
\[
{\rm (KH)}\ \Rightarrow  \ \mathcal{V}(t,x) = \delta_{[\vr, \vm](t,x)}.
\]
We remark that
although the convergence of the vanishing viscosity solutions is strong, the limit $[\vr, \vm]$ is not necessarily
a weak solution of the Euler system due to possible concentrations.

Now, an easy adaptation of Theorem \ref{NT1} yields:

\begin{Theorem} \label{KT1}

Under the hypothesis {\rm (KH)} let  $[\rN,\ \mN]$  denote the numerical solution obtained from the VFV method  with the
initial data $[\vr_{0,n}, \vm_{0,n}]$ a regular approximation of $[\vr_0, \vm_0]$, the artificial viscosity $\mu_n \searrow 0$, and the numerical step $h$, where
$[h, \mu_n, 0] \in \Ov{\mathcal{R}}$. Finally, suppose that the condition \eqref{podminka1} holds for any fixed $\mu_n.$

Then there exists $H_n \searrow 0$ such that
\begin{equation} \label{K1}
[\rNN, \mNN ] \to [\vr, \vm] \ \mbox{in}\ L^1((0,T) \times \Td; \mathds{R}^{d+1})\ \mbox{whenever} \ 0 < h_n \leq H_n,
\end{equation}
where $\delta_{[\vr, \vm]}$ is the unique viscosity solution of the Euler system.

\end{Theorem}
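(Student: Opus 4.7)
The plan is to mirror the two-step structure of the proof of Theorem~\ref{NT1}, but upgrade the S--convergence conclusion to genuine strong $L^1$ convergence by exploiting the additional compactness supplied by the Kolmogorov hypothesis. The underlying observation is that, once (KH) is assumed, there is no need to pass to subsequences: any Navier--Stokes generating sequence already converges strongly to a single, uniquely determined pair $[\vr, \vm]$ in $L^1((0,T) \times \Td; \mathds{R}^{d+1})$.

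The first step is therefore to promote (KH) to unconditional strong convergence of the generating sequence. Let $[\vr_n, \vm_n]_{n=1}^\infty$ be any family of finite energy weak solutions of the Navier--Stokes system starting from the regular approximation $[\vr_{0,n}, \vm_{0,n}]$, with viscosities $\mu_n \searrow 0$, $\lambda_n = 0$. Hypothesis (KH) gives compactness, so any subsequence admits a further subsequence converging strongly in $L^1$. Since $\mathcal{V} = \delta_{[\vr,\vm]}$ is uniquely determined, and thus both $\vm$ and (via the continuity equation) $\vr$ are observable in the sense of Definition~\ref{PD4}, Proposition~\ref{eP1} forces every such subsequential limit to coincide with $[\vr, \vm]$. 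A standard Urysohn argument then yields
\begin{equation*}
\| \vr_n - \vr \|_{L^1((0,T) \times \Td)} + \| \vm_n - \vm \|_{L^1((0,T) \times \Td; \mathds{R}^d)} \to 0 \quad \mbox{as } n \to \infty.
\end{equation*}

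Second, I would freeze $n$ and apply Theorem~\ref{thm_convergence} to the VFV approximations $[\vr_{h, \mu_n}, \vm_{h, \mu_n}]$ associated with the fixed viscosity $\mu_n$, numerical step $h \to 0$, and initial data $[\vr_{0,n}, \vm_{0,n}]$. Assumption \eqref{podminka1} is precisely condition (iii) in that theorem, so the equivalence (i)$\Leftrightarrow$(iii) provides strong $L^1$ convergence of $[\vr_{h, \mu_n}, \vm_{h, \mu_n}]$ to the classical Navier--Stokes solution $[\vr_n, \vm_n]$ as $h \to 0$. Pick any $\delta_n \searrow 0$ and select $H_n > 0$ small enough that $[h, \mu_n, 0] \in \Ov{\mathcal{R}}$ (this is automatic once $h < (c \mu_n)^{1/\alpha}$, cf.~\eqref{R}) and
\begin{equation*}
\| \vr_{h, \mu_n} - \vr_n \|_{L^1} + \| \vm_{h, \mu_n} - \vm_n \|_{L^1} \leq \delta_n \quad \mbox{whenever } 0 < h < H_n.
\end{equation*}

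The conclusion then follows from the triangle inequality: for any $h_n \leq H_n$,
\begin{equation*}
\| \vr_{h_n, \mu_n} - \vr \|_{L^1} + \| \vm_{h_n, \mu_n} - \vm \|_{L^1} \leq \delta_n + \| \vr_n - \vr \|_{L^1} + \| \vm_n - \vm \|_{L^1} \to 0,
\end{equation*}
which is exactly \eqref{K1}. I expect the main conceptual hurdle to be the first step, namely translating (KH)---which literally only provides compactness of the vanishing viscosity sequence---into \emph{unconditional} strong convergence of the entire generating sequence. This relies crucially on the uniqueness remark preceding the theorem (observability of $\vm$ and hence $\vr$), and is the only place where (KH) is used in an essential way beyond what Theorem~\ref{NT1} provides; all subsequent work is a routine repetition of the diagonalization used in the proof of Theorem~\ref{NT1}.
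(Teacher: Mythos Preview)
Your argument is correct but takes a different route from the paper's. The paper proceeds by first invoking Theorem~\ref{NT1} with $\mathcal{V}=\delta_{[\vr,\vm]}$ (which applies because, under (KH), any Navier--Stokes sequence is a generating sequence for $\delta_{[\vr,\vm]}$), obtaining $[\rNN,\mNN]\toSC\delta_{[\vr,\vm]}$; it then appeals to Lemma~\ref{sL8} to upgrade S--convergence to a Dirac mass into strong $L^1$ convergence of a subsequence, and finally removes the subsequence by uniqueness of the limit. You instead bypass the S--convergence machinery entirely: you use (KH) to get unconditional strong convergence $[\vr_n,\vm_n]\to[\vr,\vm]$ in $L^1$, then reuse the internal step of Theorem~\ref{NT1}'s proof (namely Theorem~\ref{thm_convergence}) to make the numerical solutions $\delta_n$--close to $[\vr_n,\vm_n]$, and conclude by the triangle inequality. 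Your approach is more elementary and transparent---it never needs Lemma~\ref{sL8}---while the paper's is more modular, treating Theorem~\ref{NT1} and Lemma~\ref{sL8} as black boxes. Both are valid; yours makes clearer exactly where (KH) enters.
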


The proof is a combination of Theorem \ref{NT1} for $\mathcal{V}=\delta_{[\vr, \vm]}$ with Lemma \ref{sL8}. As a matter of fact, Lemma \ref{sL8} yields \eqref{K1}
only for a suitable subsequence, however, the convergence here is unconditional as the limit is unique.

\begin{Remark}
We point out that uniqueness of the limit and its independence of the
		choice of the sequence of the vanishing viscosity coefficients are imposed in Theorem \ref{K1}
		through the (KH) hypothesis.
	\end{Remark}

\begin{Remark}
Note that convergence is unconditional, meaning \eqref{podminka1} is not required and \eqref{K1} holds for any $H_n \to 0$ as soon as  the limit $[\vr,\vm] \in C^1.$ This follows from Theorem~\ref{thm_dmvs} and \cite[Theorem~5.9]{FeLMMiSh}.
\end{Remark}

Unfortunately, the strong convergence to a \emph{unique} solution is not always observed/expected for compressible fluid flows in the
zero viscosity (turbulent) regime. The appearance of the so--called carbuncles observed by Elling \cite{ELLI2} or the turbulent
wake areas in the obstacle problem are examples of numerous phenomena when, apparently, the convergence to a possible limit is not strong. Such a scenario is compatible with Kolmogorov K41 hypothesis only if the viscosity approximation admits a non--trivial
set of accumulation points. In particular, there are \emph{different} limits for \emph{different sequences $\mu_n \searrow 0$}.
Note that this is in sharp contrast with the situation anticipated in Section~\ref{VSES}, namely the existence of a \emph{unique} limit
of the approximate sequence in the \emph{weak} topology. The possibility of several limits would obviously invalidate the results of
any numerical simulation as the latter sensitively depends on the choice of artificial viscosity
and the associated numerical step. Even if S--convergence is applied, the sums
\[
\frac{1}{N} \sum_{n=1}^N \delta_{(\vr_n, \vm_n)(t,x)}
\]
may
depend on the choice of the approximate sequence.

The only piece of information that could be retained and computed is therefore a \emph{statistical} limit of the viscous
approximation proposed in \eqref{i1}. In contrast with the Young measure, the statistical limit is interpreted as a probability
measure on the space of solution trajectories, here
\[
X = \left\{ \vr, \vm \Big| \vr \in C_{\rm weak}([0,T]; L^\gamma(\Omega)),\
\vm \in C_{\rm weak}([0,T]; L^{\frac{2 \gamma}{\gamma + 1}}(\Omega; \mathds{R}^d)) \right\}.
\]
The ``statistical" convergence then can be stated as
\[
\frac{1}{N} \sum_{n=1}^N \delta_{(\vr_n, \vm_n)} \to \mathcal{V} \ \mbox{narrowly in}\ \mathfrak{P}[X],
\]
which is formally identical with the S--convergence. The standard tools of probability theory, notably the
Prokhorov theorem, will provide an analogue of the Subsequence principle stated for
S--convergence in Proposition \ref{sP3}, namely
\[
\frac{1}{N_k} \sum_{n=1}^{N_k} \delta_{(\vr_n, \vm_n)} \to \mathcal{V} \ \mbox{for a suitable subsequence}\ N_k \to \infty.
\]
Note carefully that here, in contrast with Proposition \ref{sP3}, the \emph{whole} sequence $\{ \vr_n, \vm_n \}_{n=1}^\infty$ is relevant
for the asymptotic limit.

As the statistical limit is associated with a specific choice of the approximate sequence of viscosities and/or initial data,
its approximation by the VFV method is possible under the hypotheses of Theorem \ref{NT1}. Specifically, we would have to \emph{fix}
the relevant sequence of the data $[\vr_{0,n}, \vm_{0,n}]$ together with the viscosities $\mu_n \searrow 0$ and to adjust the numerical
step $h$ so that $[h, \mu, 0] \in \overline{\mathcal{R}}$. This can be  CPU demanding since calculation and averaging of large amount of
numerical approximations must be realized to obtain a statistically reliable picture.

\section{Conclusion}

Anticipating the vanishing viscosity limit of the Navier--Stokes system as a physically relevant solution of the Euler system
we have identified a viscosity solution of the Euler system with a parametrized family of probability measures generated by solutions of the Navier–Stokes system in the vanishing viscosity limit.
We have introduced the observables as the quantities that are independent of a specific approximate sequence
and proposed a numerical scheme to compute efficiently the values of observables by means of a summation method. Here efficiently means that the approximate sequence converges strongly (in the $L^1$--topology).

Numerical solutions were obtained by the Viscosity Finite Volume method that is based on the standard finite volume method supplemented with vanishing numerical viscosity
in the spirit of the model proposed by H.~Brenner \cite{BREN2, BREN, BREN1}.
We have shown that VFV method identifies the
viscosity solution and/or the observables at least if:
\begin{itemize}
\item the numerical step $h$ is tuned, in fact considerably smaller than the artificial viscosity $\mu$;
\item the VFV method provides a sequence of approximate solutions that admits S--limit if both the numerical step $h$ and
the artificial viscosity $\mu$ approach zero.

\end{itemize}

In future our aim is to study S--convergence for the full Euler system of gas dynamics. A challenging question is to
investigate a connection of S--convergence and compressible turbulence.

\section*{Appendix}

Tables~\ref{tab1a} and~\ref{tab2a} present the convergence of weighted averages of the density computed on $k \times k$ meshes, with $k=32, \dots, N$, $N \in \{64,\, 96,\, 128,\,\dots,\, 2048\}.$ The reference
solution is the same for all summation methods and computed as  the Ces\`aro average of the solutions computed on $k \times k$ meshes with
$k \in \{32\ell \,{|}\,\ell \in \mathds{N},\, 1\leq \ell \leq 64\}\cup \{2304, 2560, 2816, 3072\}.$

 Table~\ref{tab2a} presents the convergence results for all weights $\omega$, except $\omega = \omega_{\text{equal}}$, since
the computed Ces\` aro averages are already very close to the reference solution and formally computed experimental order of convergence is not representative.
\begin{table}
\caption{Convergence study in the $L^1$-norm for averages of the density at time $T = 2$ using different weight functions and the
Ces\`aro average as a reference solution.}
\label{tab1a}
\begin{center}
	\begin{tabular}{|cV{3}c|cV{3}c|cV{3}c|cV{3}c|c|}
		\hline
		$k$&
		\multicolumn{2}{cV{3}}{$\omega_{\text{equal}}$}& \multicolumn{2}{cV{3}}{$\omega_{\text{quad}}$}& \multicolumn{2}{cV{3}}{$\omega_{\text{sin2}}$}& \multicolumn{2}{c|}{$\omega_{\text{exp}}$}\\
		\cline{2-9}
		(up to)& error   & order& error   & order& error   & order& error   & order\\
		\hline64 & 1.44e-01 & - & 1.92e-01 & - & 1.92e-01 & - & 1.92e-01 & - \\
		\hline
		96 & 1.16e-01 & 0.54 & 1.44e-01 & 0.71 & 1.44e-01 & 0.71 & 1.44e-01 & 0.71 \\
		\hline
		128 & 9.72e-02 & 0.62 & 1.14e-01 & 0.81 & 1.11e-01 & 0.91 & 1.07e-01 & 1.05 \\
		\hline
		160 & 8.35e-02 & 0.68 & 9.34e-02 & 0.90 & 8.87e-02 & 1.01 & 8.33e-02 & 1.10 \\
		\hline
		192 & 7.31e-02 & 0.73 & 7.83e-02 & 0.97 & 7.33e-02 & 1.05 & 6.89e-02 & 1.04 \\
		\hline
		224 & 6.50e-02 & 0.76 & 6.70e-02 & 1.01 & 6.23e-02 & 1.05 & 5.85e-02 & 1.05 \\
		\hline
		256 & 5.86e-02 & 0.78 & 5.85e-02 & 1.02 & 5.41e-02 & 1.06 & 5.07e-02 & 1.08 \\
		\hline
		288 & 5.33e-02 & 0.80 & 5.19e-02 & 1.02 & 4.77e-02 & 1.07 & 4.46e-02 & 1.09 \\
		\hline
		320 & 4.90e-02 & 0.81 & 4.66e-02 & 1.02 & 4.25e-02 & 1.08 & 3.98e-02 & 1.08 \\
		\hline
		352 & 4.53e-02 & 0.81 & 4.23e-02 & 1.03 & 3.84e-02 & 1.07 & 3.60e-02 & 1.04 \\
		\hline
		384 & 4.22e-02 & 0.81 & 3.87e-02 & 1.02 & 3.50e-02 & 1.06 & 3.30e-02 & 0.98 \\
		\hline
		416 & 3.96e-02 & 0.80 & 3.56e-02 & 1.02 & 3.22e-02 & 1.03 & 3.07e-02 & 0.92 \\
		\hline
		448 & 3.73e-02 & 0.80 & 3.31e-02 & 1.01 & 3.00e-02 & 0.99 & 2.88e-02 & 0.86 \\
		\hline
		480 & 3.53e-02 & 0.80 & 3.09e-02 & 0.99 & 2.81e-02 & 0.94 & 2.73e-02 & 0.79 \\
		\hline
		512 & 3.35e-02 & 0.79 & 2.90e-02 & 0.97 & 2.66e-02 & 0.87 & 2.60e-02 & 0.71 \\
		\hline
		544 & 3.20e-02 & 0.79 & 2.74e-02 & 0.94 & 2.53e-02 & 0.80 & 2.51e-02 & 0.62 \\
		\hline
		576 & 3.06e-02 & 0.78 & 2.60e-02 & 0.92 & 2.43e-02 & 0.71 & 2.43e-02 & 0.53 \\
		\hline
		608 & 2.93e-02 & 0.78 & 2.48e-02 & 0.88 & 2.35e-02 & 0.62 & 2.37e-02 & 0.46 \\
		\hline
		640 & 2.82e-02 & 0.78 & 2.38e-02 & 0.83 & 2.28e-02 & 0.54 & 2.33e-02 & 0.40 \\
		\hline
		672 & 2.71e-02 & 0.77 & 2.29e-02 & 0.78 & 2.23e-02 & 0.47 & 2.29e-02 & 0.35 \\
		\hline
		704 & 2.62e-02 & 0.76 & 2.21e-02 & 0.73 & 2.19e-02 & 0.41 & 2.26e-02 & 0.30 \\
		\hline
		736 & 2.53e-02 & 0.76 & 2.15e-02 & 0.68 & 2.16e-02 & 0.36 & 2.23e-02 & 0.25 \\
		\hline
		768 & 2.45e-02 & 0.77 & 2.09e-02 & 0.63 & 2.13e-02 & 0.32 & 2.21e-02 & 0.22 \\
		\hline
		800 & 2.37e-02 & 0.79 & 2.04e-02 & 0.58 & 2.10e-02 & 0.28 & 2.19e-02 & 0.19 \\
		\hline
		832 & 2.30e-02 & 0.78 & 2.00e-02 & 0.54 & 2.08e-02 & 0.25 & 2.18e-02 & 0.16 \\
		\hline
		864 & 2.23e-02 & 0.80 & 1.96e-02 & 0.51 & 2.06e-02 & 0.22 & 2.17e-02 & 0.13 \\
		\hline
		896 & 2.17e-02 & 0.80 & 1.93e-02 & 0.47 & 2.05e-02 & 0.20 & 2.16e-02 & 0.10 \\
		\hline
		928 & 2.11e-02 & 0.82 & 1.90e-02 & 0.44 & 2.04e-02 & 0.18 & 2.16e-02 & 0.07 \\
		\hline
		960 & 2.05e-02 & 0.84 & 1.87e-02 & 0.41 & 2.03e-02 & 0.16 & 2.15e-02 & 0.05 \\
		\hline
		992 & 1.99e-02 & 0.87 & 1.85e-02 & 0.39 & 2.02e-02 & 0.15 & 2.15e-02 & 0.04 \\
		\hline
		1024 & 1.94e-02 & 0.89 & 1.83e-02 & 0.38 & 2.01e-02 & 0.14 & 2.15e-02 & 0.03 \\
		\hline
		1056 & 1.88e-02 & 0.90 & 1.81e-02 & 0.37 & 2.00e-02 & 0.13 & 2.15e-02 & 0.02 \\
		\hline
	\end{tabular}
\end{center}
\end{table}
\begin{table}
	\caption{Continuation: Convergence study in the $L^1$-norm for averages of the density at time $T = 2$ using different weight functions
and the Ces\`aro average as a reference solution.}
	\label{tab2a}
	\begin{center}
		\begin{tabular}{|cV{3}c|cV{3}c|cV{3}c|c|}
			\hline
			$k$ &
			\multicolumn{2}{cV{3}}{$\omega_{\text{quad}}$}& \multicolumn{2}{cV{3}}{$\omega_{\text{sin2}}$}& \multicolumn{2}{c|}{$\omega_{\text{exp}}$}\\
			\cline{2-7}
			(up to)& error   & order& error   & order& error   & order\\
			\hline
			1088 &  1.79e-02 & 0.36 & 1.99e-02 & 0.13 & 2.14e-02 & 0.02 \\
			\hline
			1120 &1.77e-02 & 0.36 & 1.98e-02 & 0.12 & 2.14e-02 & 0.03 \\
			\hline
			1152 & 1.75e-02 & 0.36 & 1.98e-02 & 0.12 & 2.14e-02 & 0.03 \\
			\hline
			1184 & 1.73e-02 & 0.37 & 1.97e-02 & 0.12 & 2.14e-02 & 0.04 \\
			\hline
			1216 & 1.71e-02 & 0.37 & 1.96e-02 & 0.13 & 2.13e-02 & 0.06 \\
			\hline
			1248 & 1.70e-02 & 0.38 & 1.96e-02 & 0.14 & 2.13e-02 & 0.07 \\
			\hline
			1280 & 1.68e-02 & 0.39 & 1.95e-02 & 0.15 & 2.13e-02 & 0.09 \\
			\hline
			1312 & 1.66e-02 & 0.40 & 1.94e-02 & 0.17 & 2.12e-02 & 0.11 \\
			\hline
			1344 & 1.65e-02 & 0.41 & 1.93e-02 & 0.19 & 2.11e-02 & 0.13 \\
			\hline
			1376 & 1.63e-02 & 0.43 & 1.92e-02 & 0.21 & 2.11e-02 & 0.15 \\
			\hline
			1408 & 1.61e-02 & 0.45 & 1.91e-02 & 0.24 & 2.10e-02 & 0.17 \\
			\hline
			1440 & 1.60e-02 & 0.48 & 1.90e-02 & 0.27 & 2.09e-02 & 0.19 \\
			\hline
			1472 & 1.58e-02 & 0.50 & 1.89e-02 & 0.30 & 2.08e-02 & 0.21 \\
			\hline
			1504 & 1.56e-02 & 0.53 & 1.87e-02 & 0.33 & 2.07e-02 & 0.24 \\
			\hline
			1536 & 1.54e-02 & 0.57 & 1.86e-02 & 0.37 & 2.06e-02 & 0.26 \\
			\hline
			1568 & 1.52e-02 & 0.61 & 1.84e-02 & 0.40 & 2.05e-02 & 0.30 \\
			\hline
			1600 & 1.50e-02 & 0.66 & 1.83e-02 & 0.44 & 2.03e-02 & 0.33 \\
			\hline
			1632 & 1.48e-02 & 0.71 & 1.81e-02 & 0.49 & 2.02e-02 & 0.36 \\
			\hline
			1664 & 1.46e-02 & 0.76 & 1.79e-02 & 0.53 & 2.00e-02 & 0.40 \\
			\hline
			1696 & 1.44e-02 & 0.81 & 1.77e-02 & 0.57 & 1.98e-02 & 0.44 \\
			\hline
			1728 & 1.42e-02 & 0.87 & 1.75e-02 & 0.62 & 1.97e-02 & 0.47 \\
			\hline
			1760 & 1.39e-02 & 0.93 & 1.73e-02 & 0.66 & 1.95e-02 & 0.51 \\
			\hline
			1792 & 1.37e-02 & 0.99 & 1.71e-02 & 0.71 & 1.93e-02 & 0.55 \\
			\hline
			1824 & 1.34e-02 & 1.05 & 1.69e-02 & 0.75 & 1.91e-02 & 0.59 \\
			\hline
			1856 & 1.32e-02 & 1.11 & 1.66e-02 & 0.80 & 1.89e-02 & 0.62 \\
			\hline
			1888 & 1.29e-02 & 1.18 & 1.64e-02 & 0.85 & 1.87e-02 & 0.66 \\
			\hline
			1920 & 1.26e-02 & 1.25 & 1.61e-02 & 0.90 & 1.85e-02 & 0.70 \\
			\hline
			1952 & 1.24e-02 & 1.32 & 1.59e-02 & 0.94 & 1.82e-02 & 0.73 \\
			\hline
			1984 & 1.21e-02 & 1.39 & 1.56e-02 & 0.99 & 1.80e-02 & 0.77 \\
			\hline
			2016 & 1.18e-02 & 1.46 & 1.54e-02 & 1.04 & 1.78e-02 & 0.80 \\
			\hline
			2048 & 1.15e-02 & 1.52 & 1.51e-02 & 1.08 & 1.76e-02 & 0.83 \\
			\hline
		\end{tabular}
	\end{center}
\end{table}

\newpage

\def\cprime{$'$} \def\ocirc#1{\ifmmode\setbox0=\hbox{$#1$}\dimen0=\ht0
  \advance\dimen0 by1pt\rlap{\hbox to\wd0{\hss\raise\dimen0
  \hbox{\hskip.2em$\scriptscriptstyle\circ$}\hss}}#1\else {\accent"17 #1}\fi}

\end{document}